\documentclass{amsart}

\usepackage{amsmath, amssymb, amsthm, amsfonts}
\usepackage{adjustbox}
\usepackage{orcidlink}
\usepackage{mathtools}
\usepackage{geometry}
\usepackage{hyperref}
\definecolor{vegasgold}{rgb}{0.77, 0.7, 0.35}
\definecolor{darkgoldenrod}{rgb}{0.72, 0.53, 0.04}
\definecolor{gold(metallic)}{rgb}{0.83, 0.69, 0.22}
\hypersetup{
 colorlinks=true,
 linkcolor=darkgoldenrod,
 filecolor=brown,      
 urlcolor=gold(metallic),
 citecolor=darkgoldenrod,
 }

\newtheorem{lthm}{Theorem}

\usepackage{graphicx}
\usepackage{tikz-cd}
\usepackage{enumitem}

\newcommand{\Q}{\mathbb{Q}}
\newcommand{\C}{\mathbb{C}}
\newcommand{\op}[1]{\operatorname{#1}}
\newcommand{\R}{\mathbb{R}}
\newcommand{\Z}{\mathbb{Z}}
\newcommand{\GL}{\mathrm{GL}}

\newcommand{\cO}{\mathcal{O}}

\newcommand{\Tr}{\operatorname{Tr}}

\theoremstyle{plain}
\newtheorem{theorem}{Theorem}[section]
\newtheorem{lemma}[theorem]{Lemma}
\newtheorem{proposition}[theorem]{Proposition}

\theoremstyle{definition}
\newtheorem{definition}[theorem]{Definition}

\theoremstyle{remark}

\numberwithin{equation}{section}

\title[Shapes of octic Kummer extensions]
{On the distribution of shapes of octic Kummer extensions}

\author[A.~Jakhar]{Anuj Jakhar\, \orcidlink{0009-0007-5951-2261}}
\address{Indian Institute of Technology Madras, Chennai, India}
\email{anujjakhar@iitm.ac.in}

\author[A.~Ray]{Anwesh Ray\, \orcidlink{0000-0001-6946-1559}}
\address{Chennai Mathematical Institute, Chennai, India}
\email{anwesh@cmi.ac.in}

\keywords{shapes of number fields, equidistribution, octic Kummer extensions, arithmetic statistics}
\subjclass[2020]{11R29, 11R45, 11R56}

\begin{document}

\maketitle

\begin{abstract}
The \emph{shape} of a number field $K$ of degree $n$ is defined as the
equivalence class of the lattice of integers under linear operations
generated by rotations, reflections, and positive scalar dilations.
It may be viewed as a point in the space of shapes
\[
\mathcal{S}_{n-1}
=
\GL_{n-1}(\Z)\backslash \GL_{n-1}(\R)/\op{GO}_{n-1}(\R).
\]
In this paper, we study the distribution of shapes of octic Kummer
extensions $L=\Q(i,\sqrt[4]{m})$, where $m\in\Z[i]$ is fourth-power-free.
We parametrize these shapes by explicit invariants known as shape parameters and
establish an asymptotic formula for their joint distribution ordered by
absolute discriminant.
The limiting distribution is given by an explicit measure that factors
as the product of a continuous measure and a discrete
measure arising from local arithmetic conditions.
\end{abstract}

\section{Introduction}

\subsection{Motivation and historical context}
The interplay between the arithmetic properties of a number field and the geometric structure of its ring of integers is a central theme in algebraic number theory, going back to the foundational works of Dirichlet, Hermite, and Minkowski. Let $K$ be a number field of degree $n = [K : \Q]$. The Minkowski embedding
\[
J: K \hookrightarrow K_\R := K \otimes_\Q \R \cong \R^{r_1} \times \C^{r_2} \cong \R^n
\]
identifies the ring of integers $\cO_K$ with a lattice $\Lambda_K=J(\cO_K)$ of full rank in the $n$-dimensional real vector space $K_\R$. The classical study of these lattices has been largely governed by a single invariant, namely, the discriminant $\Delta_K$. Geometrically, the square root of the absolute discriminant corresponds to the covolume of the lattice $\Lambda_K$. Although the discriminant is a powerful metric which encodes the ramification in number fields, it is a scalar quantity that ignores the finer geometric properties of the lattice. Indeed, two lattices may have the same covolume, however the set of angles between generating vectors may be different. To capture this finer geometric information, we consider a more refined invariant, called the \emph{shape} of the number field.

To define the shape rigorously, we view the Minkowski space $K_\R$ as a Euclidean space equipped with the inner product defined by \[\langle x, y \rangle:=\sum_{\tau:K\hookrightarrow \mathbb{C}} \tau(x)\bar{\tau} (y).\] Since $\Lambda_K$ contains the distinguished vector $J(1) = (1, \dots, 1)$, this imposes a rigid linear constraint common to all number fields. It is thus standard practice to project the lattice onto the trace-zero hyperplane
\[
H := \{ v \in K_\R \mid \langle v, J(1) \rangle = 0 \}.
\]
The \emph{shape} of $K$ is defined as the equivalence class of this projected lattice of rank $n-1$, considered modulo the group $\op{GO}_{n-1}$ of orthogonal similitudes (the group generated by rotations, reflections, and uniform scaling). The space of all such shapes, denoted by $\mathcal S_{n-1}$, is the double coset space
\[
\mathcal S_{n-1} := \GL_{n-1}(\Z) \backslash \GL_{n-1}(\R) / \op{GO}_{n-1}(\R).
\]
This space has a natural measure $\mu$ derived from the Haar measure on $\GL_{n-1}(\R)$. When $n=2$, the space of shapes can be identified with the modular surface $\mathbb{H}/\op{GL}_2(\mathbb{Z})$, where $\mathbb{H}$ is the complex upper half plane and $\op{GL}_2(\mathbb{Z})$ acts on $\mathbb{H}$ by fractional linear transformations. Under this identification, $\mu$ on $\mathcal{S}_2$ corresponds to the hyperbolic measure $\frac{dx\, dy}{y^2}$.

A central question in arithmetic statistics is how these shapes are distributed as $K$ varies over a family of number fields with fixed degree ordered by absolute discriminant. A number field $K/\mathbb{Q}$ of degree $n$ with Galois closure $\widetilde{K}$ is called \emph{generic} if $\op{Gal}(\widetilde{K}/\Q)\simeq S_n$. For generic families of number fields with degree $n$, it is expected that the shapes are equidistributed in $\mathcal S_{n-1}$ with respect to the natural measure $\mu$. This has been verified in low degrees: Terr \cite{Terr97} proved that the shapes of cubic fields are equidistributed in $\mathcal S_2$, and Bhargava and Harron \cite{BH16} subsequently extended this result to $S_4$-quartic and $S_5$-quintic fields.

\par The situation changes significantly when one restricts attention to sparse families of number fields $K$. Often such families have \emph{non-generic} Galois groups, i.e., $\op{Gal}(\widetilde{K}/K)$ is a proper subgroup of $S_n$. In such thin families, arithmetic constraints force the shapes to lie in lower-dimensional subsets of $\mathcal S_{n-1}$. The first major result was discovered by Harron \cite{Har17} in his study of pure cubic fields $K = \Q(\sqrt[3]{m})$. Although these fields are generic, they still form a thin family of fields in the entire family of cubic fields. Harron showed that the shapes of these fields do not fill the space $\mathcal S_2$ but instead lie on two specific one-dimensional geodesics in the modular surface. This leads to a tame-wild dichotomy, which is to say that the shape lies on one geodesic when the prime $3$ is tamely ramified, and on a different geodesic when $3$ is wildly ramified. Since these geodesics have measure zero in the ambient space, the resulting distribution is supported entirely on these one-dimensional sets. This phenomenon was later extended to pure fields $\Q(\sqrt[p]{m})$ by Holmes \cite{Holmes}, where $p$ is a fixed prime. The shapes of such pure fields lie on subspaces of dimension $(p-1)/2$. Harron \cite{HarronANT} also established distribution results for complex cubic fields with a fixed quadratic resolvent. In this setting, the shape is constrained to lie on a specific geodesic within the modular surface, determined by the trace-zero form of the field. 

\par More recently, investigations of pure quartic fields \cite{purequartic} and pure sextic fields \cite{JKRR26} have revealed that, in composite degree, the space of shapes decomposes into a finite union of translated torus orbits. Moreover, the measure governing the distribution of shapes on each such orbit is not given by the restriction of the ambient Haar measure~$\mu$ to the orbit, but rather by a distinct measure reflecting local arithmetic constraints. In these cases, the limiting distribution is described by a product of a continuous measure (governed by Archimedean valuations of the generator) and a discrete measure (governed by local congruence conditions). These works collectively highlight that for non-generic families, the shape acts as a precise geometric mirror reflecting the algebraic structure of the field.

\par Mantilla-Soler and Monsurró \cite{MSM16} determined the shapes of cyclic
number fields of prime degree~$\ell$, while Harron and Harron
\cite{harronharrongaloisquartic} studied shape distribution questions for
Galois quartic number fields. To the best of our knowledge, the shapes of families of number fields with
nonabelian Galois group have not been systematically investigated.
Kummer extensions of fixed degree form a natural but sparse family of such
Galois extensions.
In this paper, we study the distribution of shapes of \emph{octic Kummer
extensions} $L=\mathbb{Q}(i,\sqrt[4]{m})$, where $m\in\mathbb{Z}[i]$ is
fourth-power-free. Since $\mathcal{O}_L$ contains $\Z[i]$, the Minkowski lattice \( \Lambda_L \) contains not only the vector \( J(1) \) but also \( J(i) \). This does impose some constraints on the shape matrices and in order to resolve this, we introduce a relative notion of shape by considering the orthogonal projection of the lattice $\Lambda_L$ onto the orthogonal complement of the plane spanned by both $J(1)$ and $J(i)$. The resulting projected lattice has rank 6, and we define the shape of the octic Kummer extension to be the equivalence class of this lattice in the shape space $\mathcal S_6$.

\par A key part of the analysis is the explicit determination of integral bases, which involves subtle congruence conditions in the Gaussian integers. Using the recent classification of Venkataraman and Kulkarni \cite{quartickummer}, we compute explicit Gram matrices for the projected lattices in terms of arithmetic parameters derived from the prime factorization of $m$. Writing $m = fg^2h^3$ for squarefree, coprime Gaussian integers $f, g, h$, we identify two fundamental real invariants called \emph{shape parameters}:
\[
\lambda_1 = \frac{|f|}{|h|} \quad \text{and} \quad \lambda_2 = |g|,
\]
where $|\cdot|$ denotes the complex norm. These invariants play the same role as the shape parameters in the pure prime degree case. Our main result establishes that as the discriminant grows, the shapes of octic Kummer fields become equidistributed on specific loci defined by $(\lambda_1, \lambda_2)$ and an asymptotic formula for the limiting distribution is established.

\subsection{Main Result} 
Let us briefly describe the distribution result for the pair of shape parameters introduced above. The absolute discriminant of $L$ is given, up to a constant $C>0$ depending
only on congruence conditions modulo a certain power of $(1+i)$, by $|\Delta_L|= C|f g^2 h^3|^6$.
Accordingly, we order triples $(f,g,h)$ of elements in $\Z[i]$ that are squarefree and mutually coprime according to their \emph{height} $H(f,g,h):=|f|\,|g|^2\,|h|^3$. Within each congruence class, this is indeed equivalent to ordering by discriminant.
For a fixed rectangle
\[
\mathcal R=[R_1',R_1]\times[R_2',R_2]\subset(0,\infty)^2,
\]
we study the distribution of pairs $(\lambda_1,\lambda_2)\in\mathcal R$
as $H(f,g,h)\le X$ and $X\to\infty$, under the additional assumption
that $(f,g,h)$ is \emph{strongly carefree}, i.e., $f,g,h$ are squarefree
and pairwise coprime in $\Z[i]$. Let $N(\mathcal R;X)$ denote the number of strongly carefree triples
$(f,g,h)$ with height at most $X$ and shape parameters in $\mathcal R$. We define a discrete measure $\mu_{\mathrm{sf}}$
on $\R_{>0}$ by assigning mass
\[
\alpha(g)
=
\frac{1}{|g|^{2}}
\prod_{\mathfrak p\mid g}
\left(
\frac{\op{Norm}(\mathfrak p)}
{\op{Norm}(\mathfrak p)+2}
\right)
\]
to each squarefree $g\in\Z[i]$, and zero otherwise.
We also define a continuous measure $\mu_\infty$ on $\R_{>0}$ by
\[
d\mu_\infty(x)
=
\pi^2
\prod_{\mathfrak p}
\left(1-\dfrac{3}{\op{Norm}(\mathfrak p)^2}+\dfrac{2}{\op{Norm}(\mathfrak p)^3}\right)\,dx.
\]
Their product yields a measure $\hat{\mu}:=\mu_\infty\times \mu_{\mathrm{sf}}$ on $\mathcal R$. Our main result identifies the limiting distribution of shape
parameters with respect to this measure.

\begin{lthm}\label{thm a}
With notation as above, one has
\[
\lim_{X\to\infty}
\frac{N(\mathcal R;X)}{X}
=
\int_{\mathcal R}\hat{\mu}.
\]
\end{lthm}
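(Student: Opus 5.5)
The plan is to reduce the count to a lattice-point count in a homogeneously expanding compact region of $\C^2\cong\R^4$, then impose the squarefree and coprimality conditions by a M\"obius sieve over $\Z[i]$. Since $\lambda_2=|g|\in[R_2',R_2]$, only finitely many squarefree $g\in\Z[i]$ contribute. So I will fix such a $g$ and prove
\[
\#\{(f,h)\}=\pi^2\,(R_1-R_1')\,\frac{X}{|g|^2}\prod_{\mathfrak p\nmid g}\Bigl(1-\tfrac{3}{q^2}+\tfrac{2}{q^3}\Bigr)\prod_{\mathfrak p\mid g}\Bigl(1-\tfrac1q\Bigr)^2+o(X),\qquad q=\op{Norm}(\mathfrak p).
\]
The count on the left is over pairs $(f,h)$ with $f,h$ squarefree, $\gcd(f,h)=\gcd(f,g)=\gcd(h,g)=1$, $|f||h|^3\le Y:=X/|g|^2$ and $|f|/|h|\in[R_1',R_1]$. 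Summing over $g$ gives $X\int_{\mathcal R}\hat\mu$. The constant matches because, for $\mathfrak p\mid g$,
\[
\frac{(1-1/q)^2}{1-3/q^2+2/q^3}=\frac{q(q-1)}{(q+2)(q-1)}=\frac{q}{q+2},
\]
so the $g$-dependence is exactly $\alpha(g)$ times the Euler product in $\mu_\infty$.

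\textbf{Archimedean volume.} Set $\Omega=\{(f,h)\in\C^2: |f||h|^3\le 1,\ R_1'\le |f|/|h|\le R_1\}$. The defining inequalities are invariant under $(f,h)\mapsto(tf,th)$ up to $t^4$, so the region for height $Y$ is $Y^{1/4}\Omega$. The set $\Omega$ is compact, since $|h|\le (1/R_1')^{1/4}$ and $|f|\le R_1|h|$, and it has piecewise smooth boundary. Its volume is computed in polar coordinates $r=|f|$, $s=|h|$, substituting $r=\lambda s$:
\[
\op{vol}(\Omega)=4\pi^2\int_{R_1'}^{R_1}\int_{\lambda s^4\le1}\lambda s^3\,ds\,d\lambda=\pi^2(R_1-R_1').
\]
Hence for any full-rank sublattice $\Lambda\subset\Z[i]^2$ of index $D$, we get $\#(\Lambda\cap Y^{1/4}\Omega)=\pi^2(R_1-R_1')Y/D+O(Y^{3/4})$. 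The implied constant is uniform in $\Lambda$ as long as $\Lambda$ is of the shape $a\Z[i]\times b\Z[i]$ intersected with a diagonal congruence, for instance by Davenport's lemma applied to the product structure.

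\textbf{Sieve.} I will detect the conditions with M\"obius functions on ideals of $\Z[i]$:
\[
\sum_{d_1^2\mid f}\mu(d_1)\sum_{d_2^2\mid h}\mu(d_2)\sum_{e\mid (f,h)}\mu(e)\sum_{c_1\mid (f,g)}\mu(c_1)\sum_{c_2\mid (h,g)}\mu(c_2).
\]
Each term counts points in a sublattice with index multiplicative in the local data. Summing the main terms over all moduli gives the Euler product: at $\mathfrak p\nmid g$ the allowed valuation pairs $(v_{\mathfrak p}(f),v_{\mathfrak p}(h))$ are $(0,0),(1,0),(0,1)$, of density $1-3/q^2+2/q^3$. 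At $\mathfrak p\mid g$ only $(0,0)$ is allowed, of density $(1-1/q)^2$.

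\textbf{Main obstacle.} The hard step is controlling the tail of the sieve, since the lattice error term $O(Y^{3/4})$ cannot simply be summed over all moduli. I will truncate at $\op{Norm}(d_1),\op{Norm}(d_2),\op{Norm}(e)\le Z$ and treat the remainder by a trivial bound. For example, the number of $(f,h)\in Y^{1/4}\Omega$ with $d^2\mid f$ for some $\op{Norm}(d)>Z$ is
\[
\ll \sum_{\op{Norm}(d)>Z}\bigl(Y/\op{Norm}(d)^2+Y^{1/2}\bigr)\ll Y/Z+\cdots .
\]
Here I must use that $|f|\ll Y^{1/4}$ bounds the range of $d$ by $\op{Norm}(d)\ll Y^{1/4}$. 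The analogous tail for $e$ dividing both $f$ and $h$ is handled the same way, using that the index is $\op{Norm}(e)^2$. Choosing $Z$ a small power of $Y$ makes all errors $o(Y)$.

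Two bookkeeping points remain. First, the normalization of counting elements rather than ideals (unit factors) must be consistent with the paper's convention for $N(\mathcal R;X)$. Second, the boundary cases $|f|/|h|=R_1,R_1'$ have measure zero, so they do not affect the limit.
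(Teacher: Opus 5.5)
Your proposal is correct, and it reaches the paper's main term by a genuinely different sieve.

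\textbf{The paper's route.} It keeps all three variables in play. In Proposition~\ref{prop:Nn-asymptotic} it counts $(f,g,h)$ in the admissible residue classes modulo $\mathfrak n^2$, where $\mathfrak n$ is the product of the primes of norm below a \emph{fixed} bound. This uses Davenport's lemma on translates of $\mathfrak n^2\Z[i]\times\mathfrak n^2\Z[i]$, together with fibre counts over $g$. Lemma~\ref{lem:large-prime-tail} then disposes of large primes, and the prime bound is sent to infinity only after $X\to\infty$.

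\textbf{Your route.} You fix one of the finitely many squarefree $g$. You use homogeneity to write the $(f,h)$-region as the dilate $Y^{1/4}\Omega$, which turns the paper's split volume integral at $w_0$ into a one-line substitution. You then run a M\"obius sieve truncated at $Z=Y^{\delta}$. The fact that makes this truncation legitimate should be stated rather than left as ``Davenport applied to the product structure'': $\Omega$ depends only on $(|f|,|h|)$. Pulling back along $(f',h')\mapsto(af',bh')$ therefore gives a region of the same kind, with $|f'|\ll Y^{1/4}/|a|$ and $|h'|\ll Y^{1/4}/|b|$. So the lattice error is $O(Y^{3/4})$ uniformly over $a\Z[i]\times b\Z[i]$, and the $\asymp Z^3$ sieve terms cost $O(Z^3Y^{3/4})=o(Y)$ once $\delta<1/12$.

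\textbf{Two small corrections to your tail.} First, the discarded part of the truncated M\"obius sum is a \emph{weighted} count of pairs, with weight at most a divisor function and hence $\ll Y^{\varepsilon}$. It is not just the number of pairs with some $d^2\mid f$, $\op{Norm}(d)>Z$. Second, the cleanest bound for that number uses $|h|\asymp|f|\ll Y^{1/4}$ on $Y^{1/4}\Omega$. This gives $\ll Y/\op{Norm}(d)^2$ with no secondary Davenport terms; your $+Y^{1/2}$ also omits a $Y^{3/4}/\op{Norm}(d)$ contribution. Neither point affects the conclusion.

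\textbf{What each route buys.} The paper's route needs lattice counts only for a fixed modulus plus a qualitative tail, and it delivers exactly the limit. Its tail lemma does tacitly need a bound $\ll X/\op{Norm}(\mathfrak p)^2$ uniform in $\mathfrak p$, which your product-lattice observation supplies. Your route demands that uniformity up front, but in exchange yields an explicit power saving $O(X^{1-\eta})$. The paper's limiting argument does not actually deliver such a saving, despite the $O(X^{3/4})$ stated in Theorem~\ref{thm:main-density}.
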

This provides a concrete
description of the distribution of shapes in this non-generic family of octic number fields and illustrates how arithmetic constraints influence the resulting
geometry of integer lattices.

\subsection{Organization}
Including the introduction, the article consists of four sections. In \S2, we recall the notion of the shape of a lattice of rank $n$ and its realization as a point in the space of shapes $\mathcal{S}_n$. We review equivalent descriptions using change-of-basis matrices and Gram matrices, and explain how the natural invariant measure on the space of shapes arises from Haar measure on $\GL_m(\R)$. We then specialize this framework to number fields, defining the shape of a number field via the Minkowski embedding and the projection of the ring of integers onto the trace-zero subspace. Finally, we review the explicit description of integral bases for the octic Kummer extensions $L=\Q(i,\sqrt[4]{m})$, following the classification of \cite{quartickummer}. These bases, which depend on congruence conditions on $m$, provide the arithmetic input needed for shape computations. In \S3, we compute the Gram matrices associated to the
Minkowski lattices of the rings of integers of the octic Kummer extensions
$L=\Q(i,\sqrt[4]{m})$. We show that the lattice shapes are governed by the parameters $(\lambda_1, \lambda_2)$ in an asymptotic sense as $|m|\rightarrow \infty$. In Section~\S4, we study the distribution of the shape
parameters $(\lambda_1,\lambda_2)$ by counting triples $(f,g,h)\in\Z[i]^3$
ordered by a natural height arising from the discriminant.
We first obtain an asymptotic formula without arithmetic restrictions using
geometry-of-numbers arguments. Then we impose squarefreeness and
coprimality conditions and prove Theorem \ref{thm a}.

\subsection*{Declarations}
\begin{description}
     \item[Conflict of interest] Not applicable, there is no conflict of interest to report.
    \item[Availability of data and materials] No data was generated or analyzed in obtaining the results in this article.
\end{description}

\section{Preliminary notions} 

\subsection{Shapes of lattices}

Let $V$ be a real vector space of finite dimension $m$, equipped with an inner product $\langle \cdot,\cdot\rangle$. Fix an orthonormal basis $e_1,\dots,e_m$ of $V$, so that $\langle e_k,e_j\rangle=\delta_{k,j}$. Let $\Lambda\subset V$ be a lattice of full rank. The \emph{shape} of $\Lambda$ is an invariant designed to capture the geometry of $\Lambda$ up to overall scaling and orthogonal change of coordinates.

To make this precise, let $\op{GO}_m(\R)\subset \GL_m(\R)$ denote the group of real invertible matrices preserving the inner product up to a positive scalar factor. Equivalently,
\[
\op{GO}_m(\R)=\{M\in \GL_m(\R): MM^T=\lambda\,\op{Id}_m \text{ for some }\lambda\in\R_{>0}\}.
\]
The space of shapes of rank-$m$ lattices is defined as the double coset space
\[
\mathcal S_m := \GL_m(\Z)\backslash \GL_m(\R)/\op{GO}_m(\R).
\]

Given a $\Z$-basis $b_1,\dots,b_m$ of $\Lambda$, write each basis vector in the form
\[
b_k=\sum_{j=1}^m m_{k,j} e_j,\qquad m_{k,j}\in\R,
\]
and let $[m_{k,j}]\in \GL_m(\R)$ be the associated change-of-basis matrix. The shape of $\Lambda$ is then defined to be the corresponding double coset
\[
\op{sh}(\Lambda):=\GL_m(\Z)\,[m_{k,j}]\,\op{GO}_m(\R)\in\mathcal S_m.
\]
This definition is independent of the choice of $\Z$-basis of $\Lambda$, and it is unchanged if $\Lambda$ is rescaled by a positive real number or acted on by an orthogonal transformation. The space $\mathcal S_m$ carries a natural measure $\mu_m$, obtained by pushing forward a Haar measure on $\GL_m(\R)$.

\par
An equivalent and often more convenient description of lattice shapes is given in terms of Gram matrices. Let $\mathcal G$ denote the space of positive definite symmetric real matrices of size $(n-1)\times(n-1)$. The group $\GL_{n-1}(\Z)$ acts on $\mathcal G$ by $M\cdot G := M^T G M$ where $M\in \GL_{n-1}(\Z)$, $G\in \mathcal G$ and $\R^\times$ acts by homotheties,
\[
G\cdot r := r^2 G,\qquad r\in \R^\times.
\]
There is a natural $\GL_{n-1}(\Z)$-equivariant bijection
\[
\GL_{n-1}(\R)/\op{GO}_{n-1}(\R)\;\longrightarrow\; \mathcal G/\R^\times,
\]
given by sending a matrix $M$ to $M^TM$. Consequently, the space of shapes may be identified with
\begin{equation}\label{new interpretation}
\mathcal S_{n-1}
\;=\;
\GL_{n-1}(\Z)\backslash \mathcal G/\R^\times.
\end{equation}

If $\Lambda$ has $\Z$-basis $b_1,\dots,b_{n-1}$, its Gram matrix is
\[
\op{Gr}(\Lambda):=\bigl(\langle b_k,b_j\rangle\bigr)_{k,j}.
\]
Under the above identification, the shape of $\Lambda$ is precisely the equivalence class of $\op{Gr}(\Lambda)$ in $\mathcal S_{n-1}$. Indeed, for any $M\in \GL_{n-1}(\Z)$ and $r\in \R^\times$, one has
\[
\bigl(\langle rMb_k, rMb_j\rangle\bigr)_{k,j}
=
r^2\, M^T \op{Gr}(\Lambda) M,
\]
which shows that the class of $\op{Gr}(\Lambda)$ is invariant under changes of basis and scaling.

\subsection{The shape of a number field}
\par Let $L/\Q$ be a number field of degree $n$, and denote by
\[
\sigma_1,\dots,\sigma_n : L \hookrightarrow \C
\]
its embeddings into the complex numbers. The Minkowski map
\[
J : L \longrightarrow \C^n, \qquad
x \longmapsto \bigl(\sigma_1(x),\dots,\sigma_n(x)\bigr),
\]
identifies $L$ with a lattice inside a real vector space. More precisely, let
\[
L_\R := \op{Span}_\R J(L) \subset \C^n,
\]
which is an $n$-dimensional real vector space.

We endow $\C^n$ with its standard Hermitian form
\[
\langle z,w\rangle := \sum_{k=1}^n z_k \,\overline{w_k},
\]
and restrict it to $L_\R$. For $x,y\in L$ one computes
\[
\langle J(x),J(y)\rangle
=
\sum_{\tau:L\hookrightarrow\C} \tau(x)\,\overline{\tau(y)}
=
\sum_{\tau:L\hookrightarrow\C} \tau(y)\,\overline{\tau(x)}
=
\langle J(y),J(x)\rangle,
\]
since the embeddings occur in complex conjugate pairs. Hence this restriction is a real, symmetric inner product on $L_\R$.

\par
A natural first attempt to define the shape of $L$ would be to take the shape of the full lattice $J(\cO_L)\subset L_\R$. However, this lattice always contains the distinguished vector
\[
J(1)=(1,1,\dots,1),
\]
which imposes a rigid linear constraint and prevents the resulting shapes from exhibiting genuinely random behavior. To eliminate this obstruction, consider the trace-zero projection
\[
\alpha \longmapsto \alpha^\perp := n\alpha - \Tr_{L/\Q}(\alpha),
\]
which maps $\cO_L$ into itself and annihilates $1$. The image
\[
\cO_L^\perp := \{\alpha^\perp : \alpha\in \cO_L\}
\]
is a free $\Z$-module of rank $n-1$, and its Minkowski embedding
\[
J(\cO_L^\perp) \subset L_\R
\]
lies in the orthogonal complement of $J(1)$. This construction produces a rank-$(n-1)$ lattice endowed with an induced inner product.

\begin{definition}
The \emph{shape of the number field $L$} is defined to be the shape of the lattice $J(\cO_L^\perp)$ in the $(n-1)$-dimensional real inner product space $J(1)^\perp \subset L_\R$.
\end{definition}

\par
To describe this shape concretely, choose an integral basis
\[
\{1,\alpha_1,\dots,\alpha_{n-1}\}
\]
of $\cO_L$. Then
\[
\{\alpha_1^\perp,\dots,\alpha_{n-1}^\perp\}
\]
is a $\Z$-basis of $\cO_L^\perp$. The associated Gram matrix is
\[
\op{Gr}\bigl(J(\cO_L^\perp)\bigr)
=
\bigl(\langle J(\alpha_k^\perp),J(\alpha_j^\perp)\rangle\bigr)_{1\le k,j\le n-1},
\]
and its equivalence class in the space of lattice shapes
\[
\mathcal S_{n-1}=\GL_{n-1}(\Z)\backslash \mathcal G/\R^\times
\]
represents the shape of $L$.

\subsection{An integral basis for the fields $\Q(i,\sqrt[4]{m})$}\label{s 1.3}

In this section we recall, for the reader’s convenience, the explicit
description of integral bases for quartic Kummer--cyclotomic extensions
from \cite{quartickummer}.  These results will be used repeatedly in our
subsequent analysis of shapes and their distribution. Let $K=\Q(i)$, $m=fg^{2}h^{3}\in\Z[i]$ where $f,g,h\in\Z[i]$ are squarefree and pairwise coprime, and let $L=K(\alpha)$ with $\alpha^{4}=m$. Since $\Z[i]$ is a principal ideal domain, $\cO_{L}$ is a free
$\Z[i]$-module of rank~$4$.  By the general theory of normalised
integral bases, there exists a basis of the form
\begin{equation}\label{eq:nib}
\Bigl\{
1,\;
\frac{f_{1}(\alpha)}{d_{1}},\;
\frac{f_{2}(\alpha)}{d_{2}},\;
\frac{f_{3}(\alpha)}{d_{3}}
\Bigr\},
\end{equation}
where $d_{i}\in\Z[i]$ and $f_{j}(X)\in\Z[i][X]$ are monic polynomials of
degree~$j$. In \cite{quartickummer}, all possible triples $(d_{1},d_{2},d_{3})$ are
determined explicitly in terms of the residue classes of $f,g,h$ modulo
powers of $(1+i)$.  The outcome is a finite list of cases.
\par A key role is played by $M:= K(\sqrt{fh}) \subset L$.
Letting $\gamma:=\sqrt{fh}$, \cite[Proposition 1]{quartickummer} asserts that 
$M/K$ admits an integral basis of the form $\Bigl\{1,\;\frac{a+\gamma}{d_{1}}\Bigr\}$,
where $d_{1}^2|4$ and $a^{2}\equiv fh \pmod{d_{1}^{2}}$. The precise value of $d_{1}$ is determined in \cite[Table 1]{quartickummer}. Theorem 2 in \emph{loc. cit.} shows that, depending on the
congruence classes of $f,g,h$ modulo $(1+i)$, the normalised
integral basis can always be chosen with $(f_1, f_2, f_3, d_1, d_2, d_3)$ belonging to an explicit
list, given below:
\begin{table}[ht]
\centering
\scriptsize
\renewcommand{\arraystretch}{1.15}
\caption{Integral bases for $L=\Q(i,\sqrt[4]{m})$}
\label{tab:integral-bases}
\begin{adjustbox}{max width=\textwidth}
\begin{tabular}{|c|p{6cm}|p{9cm}|}
\hline
\textbf{Case} & \textbf{Conditions on $m,f,g,h$} & \textbf{Integral basis of $\mathcal O_L$ over $\Z[i]$} \\
\hline
1
&
$m \equiv 1 \pmod{8}$
&
$\left\{
1,\;
\dfrac{1+\alpha}{1+i},\;
\dfrac{|gh|^{2}\bigl(i+(1+i)\alpha\bigr)+\alpha^{2}}{2(1+i)gh},\;
\dfrac{|gh^{2}|^{2}\bigl(1+\alpha+\alpha^{2}\bigr)+\alpha^{3}}{4gh^{2}}
\right\}$ \\
\hline
2
&
$m \equiv 1+4i \pmod{8}$
&
$\left\{
1,\;
\dfrac{1+\alpha}{1+i},\;
\dfrac{|gh|^{2}\bigl(-i+(1+i)\alpha\bigr)+\alpha^{2}}{2(1+i)gh},\;
\dfrac{|gh^{2}|^{2}\bigl(2-i+\alpha+i\alpha^{2}\bigr)+\alpha^{3}}{4gh^{2}}
\right\}$ \\
\hline
3
&
$\begin{aligned}
m &\equiv 2i \pmod{4},\\
fh &\equiv 1 \pmod{4}
\end{aligned}$
&
$\left\{
1,\;
\alpha,\;
\dfrac{gh+\alpha^{2}}{2gh},\;
\dfrac{\alpha+\alpha^{3}/gh^{2}}{2}
\right\}$ \\
\hline
4
&
$\begin{aligned}
m &\equiv 2i \pmod{4},\\
fh &\equiv -1 \pmod{4}
\end{aligned}$
&
$\left\{
1,\;
\alpha,\;
\dfrac{i gh+\alpha^{2}}{2gh},\;
\dfrac{i\alpha+\alpha^{3}/gh^{2}}{2}
\right\}$ \\
\hline
5
&
$\begin{aligned}
m &\equiv 2i \pmod{4},\\
fh &\equiv \pm1 \pmod{2(1+i)},\\
f\bar h &\equiv 1 \pmod{2(1+i)}
\end{aligned}$
&
$\left\{
1,\;
\alpha,\;
\dfrac{gh+\alpha^{2}}{(1+i)gh},\;
\dfrac{\alpha+\alpha^{3}/gh^{2}}{2}
\right\}$ \\
\hline
6
&
$\begin{aligned}
m &\equiv 2i \pmod{4},\\
fh &\equiv \pm1 \pmod{2(1+i)},\\
f\bar h &\equiv -1 \pmod{2(1+i)}
\end{aligned}$
&
$\left\{
1,\;
\alpha,\;
\dfrac{gh+\alpha^{2}}{(1+i)gh},\;
\dfrac{i\alpha+\alpha^{3}/gh^{2}}{2}
\right\}$ \\
\hline
7
&
$m \equiv 3+2i \pmod{4}$
&
$\left\{
1,\;
\alpha,\;
\dfrac{|gh|^{2}+\alpha^{2}}{(1+i)gh},\;
\dfrac{|gh^{2}|^{2}\bigl(1+\alpha+\alpha^{2}\bigr)+\alpha^{3}}{2gh^{2}}
\right\}$ \\
\hline
8
&
$m \equiv 1+2i \pmod{4}$
&
$\left\{
1,\;
\alpha,\;
\dfrac{|gh|^{2}\bigl(1+(1+i)\alpha\bigr)+\alpha^{2}}{2gh},\;
\dfrac{|gh^{2}|^{2}\bigl(\alpha+(1+i)\alpha^{2}\bigr)+\alpha^{3}}{2gh^{2}}
\right\}$ \\
\hline
9
&
$m \equiv 3 \pmod{4}$
&
$\left\{
1,\;
\alpha,\;
\dfrac{i|gh|^{2}+\alpha^{2}}{2gh},\;
\dfrac{|gh^{2}|^{2}\bigl(i+i\alpha+\alpha^{2}\bigr)+\alpha^{3}}{2gh^{2}}
\right\}$ \\
\hline
10
&
$\begin{aligned}
m &\equiv 5 \pmod{8},\\
\text{or } m &\equiv 5+4i \pmod{8}
\end{aligned}$
&
$\left\{
1,\;
\dfrac{1+\alpha}{1+i},\;
\dfrac{|gh|^{2}+\alpha^{2}}{2gh},\;
\dfrac{|gh^{2}|^{2}\bigl(1+\alpha+\alpha^{2}\bigr)+\alpha^{3}}{2(1+i)gh^{2}}
\right\}$ \\
\hline
11
&
$f$ even or $h$ even or $m \equiv i \pmod{2}$
&
$\left\{
1,\;
\alpha,\;
\dfrac{\alpha^{2}}{gh},\;
\dfrac{\alpha^{3}}{gh^{2}}
\right\}$ \\
\hline
12
&
$\begin{aligned}
fh &\equiv i \pmod{2},\\
g &\text{ even}
\end{aligned}$
&
$\left\{
1,\;
\alpha,\;
\dfrac{\alpha^{2}}{gh},\;
\dfrac{i\alpha+\alpha^{3}/gh^{2}}{1+i}
\right\}$ \\
\hline
\end{tabular}
\end{adjustbox}
\end{table}

\section{Computing Gram matrices}
\par
With the notation of Section~\ref{s 1.3}, let $K=\Q(i)$ and $L=K(\alpha)$, where $\alpha^4=m\in\Z[i]$ is fourth-power free. Write $m=fg^2h^3$,
with $f,g,h\in\Z[i]$ pairwise coprime and squarefree. The embeddings of $L$ into $\C$ are parametrized by $(k,\varepsilon)\in\{0,1,2,3\}\times\{\pm\}$
defined by setting $\tau_{k,\varepsilon}(i)=\varepsilon i$ and $\tau_{k,\varepsilon}(\alpha)=i^k\alpha$.
We shall write 
\[J(\alpha)=\left(\tau_{0, +}(\alpha), \dots, \tau_{3, +}(\alpha), \tau_{0, -}(\alpha), \dots, \tau_{3, -}(\alpha)\right),\] and thus in particular
\[J(i)=\left(i,i,i,i, -i, -i, -i, -i\right).\]
\subsection{Gram matrices of Type I}
\par We say that $L$ is of \emph{Type I} if we are in one of the cases $3,4,5,6,11$ or $12$. The integral bases computed in each of these cases can be related by a fixed change of basis matrix. For simplicity, we begin with Case 11. In this case, an integral-basis of $\cO_L$ is given by
\[
\mathcal B_{\Z}
=
\left\{
e_1,e_2,e_3,e_4,e_5,e_6,e_7,e_8
\right\}
=
\left\{
1,\ \alpha,\ \frac{\alpha^2}{gh},\ \frac{\alpha^3}{gh^2},
\ i,\ i\alpha,\ i\frac{\alpha^2}{gh},\ i\frac{\alpha^3}{gh^2}
\right\}.
\]
For $r,s\in\{1,\dots,8\}$ we have
\[
\langle e_r,e_s\rangle
=
\sum_{k=0}^3
\Bigl(
\tau_{k,+}(e_r)\,\overline{\tau_{k,+}(e_s)}
+
\tau_{k,-}(e_r)\,\overline{\tau_{k,-}(e_s)}
\Bigr).
\]
\begin{proposition}
Suppose that $f$ is even or $h$ is even or $m \equiv i \pmod{2}$ and the conditions imposed by Case 11 are satisfied. Then the Gram matrix $G^{(8)}_{11}
=
\bigl(\langle e_r,e_s\rangle\bigr)_{1\le r,s\le 8}$ is diagonal and given by
\[
\begin{pmatrix}
8 & 0 & 0 & 0 & 0 & 0 & 0 & 0 \\
0 & 8|m|^{1/2} & 0 & 0 & 0 & 0 & 0 & 0 \\
0 & 0 & \dfrac{8|m|}{|gh|^{2}} & 0 & 0 & 0 & 0 & 0 \\
0 & 0 & 0 & \dfrac{8|m|^{3/2}}{|gh^2|^{2}} & 0 & 0 & 0 & 0 \\
0 & 0 & 0 & 0 & 8 & 0 & 0 & 0 \\
0 & 0 & 0 & 0 & 0 & 8|m|^{1/2} & 0 & 0 \\
0 & 0 & 0 & 0 & 0 & 0 & \dfrac{8|m|}{|gh|^{2}} & 0 \\
0 & 0 & 0 & 0 & 0 & 0 & 0 & \dfrac{8|m|^{3/2}}{|gh^2|^{2}}
\end{pmatrix}
\]

\end{proposition}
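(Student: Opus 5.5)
The plan is to exploit the fact that every element of the basis $\mathcal B_{\Z}$ is a monomial of the form $e_r=\beta_r\,\alpha^{j_r}$, where $\beta_r\in K=\Q(i)$ and $j_r\in\{0,1,2,3\}$. Explicitly, $\beta_r\in\{1,\,1/(gh),\,1/(gh^2)\}$ for $r\le 4$ and $\beta_r\in\{i,\,i/(gh),\,i/(gh^2)\}$ for $r\ge 5$, with $j_r=j_{r+4}=r-1$. Every embedding $\tau_{k,\varepsilon}$ restricts to $K$ as either the identity ($\varepsilon=+$) or complex conjugation ($\varepsilon=-$). Hence
\[
\tau_{k,\varepsilon}(e_r)=\tau_{\varepsilon}(\beta_r)\,i^{kj_r}\,\tau_{0,\varepsilon}(\alpha)^{j_r}.
\]
Here $\tau_{0,\varepsilon}(\alpha)$ is a fixed fourth root of $m$ or of $\bar m$. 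I will read the paper's parametrization $\tau_{k,-}(\alpha)=i^k\alpha$ in this sense, since the only thing the argument uses is that $|\tau_{0,\varepsilon}(\alpha)|=|m|^{1/4}$ and that the $k$-dependence is exactly the factor $i^{kj}$.

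The first step substitutes this into the formula for $\langle e_r,e_s\rangle$ stated just before the proposition. For each fixed $\varepsilon$, the $k$-sum factors as
\[
\tau_\varepsilon(\beta_r)\overline{\tau_\varepsilon(\beta_s)}\;\tau_{0,\varepsilon}(\alpha)^{j_r}\,\overline{\tau_{0,\varepsilon}(\alpha)}^{\,j_s}\;\sum_{k=0}^3 i^{k(j_r-j_s)}.
\]
By orthogonality of characters of $\Z/4\Z$, the last sum equals $4$ if $j_r=j_s$ and $0$ otherwise. So every entry with $j_r\ne j_s$ vanishes.

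The second step handles the remaining off-diagonal pairs, which are $(r,s)=(r,r+4)$ and its transpose, where $\beta_s=i\beta_r$. In that case the $\alpha$-factor equals $|m|^{j/2}$ for both values of $\varepsilon$. The $\beta$-factor is $|\beta_r|^2\,\overline{i}=-i|\beta_r|^2$ for $\varepsilon=+$. For $\varepsilon=-$ it is $\bar\beta_r\,\overline{\bar\beta_r\,(-i)}=i|\beta_r|^2$. These two contributions cancel, so the matrix is diagonal.

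The third step computes the diagonal entries. Using $|\tau_\varepsilon(\beta)|=|\beta|$, each of the $8$ embeddings contributes $|\beta_r|^2|m|^{j_r/2}$, so the entry is
\[
\langle e_r,e_r\rangle=8\,|\beta_r|^2\,|m|^{j_r/2}.
\]
Plugging in $|\beta_r|^2\in\{1,\,|gh|^{-2},\,|gh^2|^{-2}\}$ together with $|i|=1$ yields exactly the displayed matrix. The Case 11 hypotheses enter only through the fact that $\mathcal B_{\Z}$ is a $\Z$-basis of $\cO_L$, which is taken from Table~\ref{tab:integral-bases}.

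The only delicate point is the first step: making the embedding convention consistent with $\alpha^4=m$ on the $\varepsilon=-$ side. Once it is clear that only moduli and the character sum over $k$ matter, the rest is bookkeeping.
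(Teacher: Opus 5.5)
Your proposal is correct and takes essentially the same route as the paper. Both write each basis element as a $K$-multiple of $\alpha^{j}$, kill entries with distinct exponents via $\sum_{k} i^{k(j_r-j_s)}=0$, cancel the $(e_r, ie_r)$ pairings between the $\varepsilon=+$ and $\varepsilon=-$ embeddings, and read off the diagonal from $|\tau(\alpha)|=|m|^{1/4}$. Your handling of the $\varepsilon=-$ embeddings, conjugating the coefficient and taking $\tau_{0,-}(\alpha)$ to be a fourth root of $\bar m$, is more careful than the paper's formula $\tau_{k,\varepsilon}(e_r)=c_r i^{ka_r}\alpha^{a_r}$; since only moduli enter, the conclusion is the same.
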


\begin{proof}
To determine the shape of $\cO_L$, we compute the Gram matrix of $\mathcal B_{\Z}$. Note that for $1\le r\le4$, each basis element has the form
\[
e_r=c_r\,\alpha^{a_r},
\]
where
\[
(a_r,c_r)=
\begin{cases}
(0,1), & r=1,\\
(1,1), & r=2,\\
(2,(gh)^{-1}), & r=3,\\
(3,(gh^2)^{-1}), & r=4,
\end{cases}
\qquad c_r\in K=\Q(i).
\]
and thus, we find that $\tau_{k,\varepsilon}(e_r)
=
c_r\,i^{k a_r}\alpha^{a_r}$. For $r=s$, a direct computation gives
\[
\langle 1,1\rangle=8,
\qquad
\langle \alpha,\alpha\rangle=8|\alpha|^2=8|m|^{1/2},
\]
\[
\left\langle \frac{\alpha^2}{gh},\frac{\alpha^2}{gh}\right\rangle
=
8\frac{|\alpha|^4}{|gh|^2}
=
8\frac{|m|}{|gh|^2},
\]
and
\[
\left\langle \frac{\alpha^3}{gh^2},\frac{\alpha^3}{gh^2}\right\rangle
=
8\frac{|\alpha|^6}{|gh^2|^2}
=
8\frac{|m|^{3/2}}{|gh^2|^2}.
\]
\noindent If $1\le r\neq s\le4$, then
\[
\langle e_r,e_s\rangle
=
2c_r\overline{c_s}\alpha^{a_r}\overline{\alpha^{a_s}}
\sum_{k=0}^3 i^{k(a_r-a_s)}=0,
\]
since $a_r\not\equiv a_s\pmod{4}$. Consider the diagonal block $5\le r,s\le8$, for which the entries coincide with the previous ones, since
\[
\tau_{k,\varepsilon}(ie_j)=\varepsilon i\,\tau_{k,\varepsilon}(e_j).
\]
Therefore, we find that
\[
\langle e_{4+r},e_{4+s}\rangle
=
\langle e_r,e_s\rangle
\]
for $1\le r,s\le4$.
\par It is easy to see that for $1\leq r,s\leq 4$,
\[
\langle e_r,e_{4+s}\rangle
=
\sum_{k=0}^3
\Bigl(
\tau_{k,+}(e_r)\,\overline{i\tau_{k,+}(e_s)}
+
\tau_{k,-}(e_r)\,\overline{-i\tau_{k,-}(e_s)}
\Bigr)
=0,
\]
so all mixed entries vanish. Collecting all $64$ entries yields the stated diagonal Gram matrix.
\end{proof}
\par
In every case under consideration, the ring of integers $\cO_L$ contains
$\cO_K=\Z[i]$. Consequently, the Minkowski lattice $J(\cO_L)\subset L_\R$
always contains the two distinguished vectors $J(1)$ and $J(i)$.
These vectors impose rigid linear relations which obscure the intrinsic
geometry of the remaining directions. For this reason, we define
$J(\cO_L)'$ to be the orthogonal projection of $J(\cO_L)$ onto the real
subspace
\[
L_\R' := \{\, v\in L_\R \mid \langle v, J(1)\rangle = \langle v, J(i)\rangle =0 \,\},
\]
which has codimension~$2$ in $L_\R$. In Case~11, the Gram matrix computed above shows that the shape of the
projected lattice $J(\cO_L)'$ is 
\[
\begin{pmatrix}
1 & 0 & 0 & 0 & 0 & 0\\
0 & \dfrac{|m|^{1/2}}{|gh|^{2}} & 0 & 0 & 0 & 0\\
0 & 0 & \dfrac{|m|}{|gh^2|^{2}} & 0 & 0 & 0\\
0 & 0 & 0 & 1 & 0 & 0\\
0 & 0 & 0 & 0 & \dfrac{|m|^{1/2}}{|gh|^{2}} & 0\\
0 & 0 & 0 & 0 & 0 & \dfrac{|m|}{|gh^2|^{2}}
\end{pmatrix}
\]
and is completely determined by the diagonal entries
\[
\frac{c}{b}
=
\frac{|m|^{1/2}}{|gh|^{2}}
=
\sqrt{\frac{|fg^2h^3|}{|gh|^{4}}}
=
\frac{\sqrt{|f|/|h|}}{|g|},
\qquad
\frac{d}{b}
=
\frac{|m|}{|gh^2|^{2}}
=
\frac{|fg^2h^3|}{|gh^2|^{2}}
=
\frac{|f|}{|h|}.
\]
\begin{definition}\label{shape parameter defn}Motivated by the above observation, we introduce the following
\emph{shape parameters}:
\[
\lambda_1 := \frac{|f|}{|h|},
\qquad
\lambda_2 := |g|,
\]
analogous to the parameters considered by Holmes \cite[Section 4.2.1]{Holmes}.
The pair $(\lambda_1,\lambda_2)$ uniquely determines the shape of the
projected lattice $J(\cO_L)'$ in Case 11.
\end{definition}

\par Likewise, we may obtain expressions for the projected shape matrices in the other cases whose formulae involve $\lambda_1$ and $\lambda_2$. Let $G_{11}^{(4)}$ denote the $4\times 4$ Gram matrix associated to the $\Z[i]$--basis
\[
\{\,1,\ \alpha,\ \alpha^{2}/(gh),\ \alpha^{3}/(gh^{2})\,\}.
\]
By the explicit calculations above, this matrix is diagonal and given by
\[
G_{11}^{(4)}
=
\begin{pmatrix}
8 & 0 & 0 & 0 \\
0 & 8|m|^{1/2} & 0 & 0 \\
0 & 0 & \dfrac{8|m|}{|gh|^{2}} & 0 \\
0 & 0 & 0 & \dfrac{8|m|^{3/2}}{|gh^2|^{2}}
\end{pmatrix},
\]
and moreover, the full $8\times 8$ matrix decomposes as
\[
G_{11}^{(8)}
=
\begin{pmatrix}
G_{11}^{(4)} & 0 \\
0 & G_{11}^{(4)}
\end{pmatrix}.
\]
The shape is represented by 
\[\begin{pmatrix}
\frac{1}{8|m|^{1/2}}\cdot  G_{11}^{(4)} & 0 \\
0 & \frac{1}{8|m|^{1/2}}\cdot G_{11}^{(4)}
\end{pmatrix}.\]
To explicitly demonstrate the dependence of the lattice shapes on the parameters $\lambda_1$ and $\lambda_2$, we establish the following lemma. 
\begin{lemma}\label{lem:iden}
Let $\lambda_1 = |f|/|h|$ and $\lambda_2 = |g|$. The following identities express the fundamental quantities of the shape analysis in terms of $|m|$ and the parameters $(\lambda_1, \lambda_2)$:
\begin{align*}
    |h| &= \frac{|m|^{1/4}}{\lambda_1^{1/4} \lambda_2^{1/2}} &
    |f||h| &= \frac{\sqrt{\lambda_1}}{\lambda_2} |m|^{1/2} \\
    |gh| &= \frac{\sqrt{\lambda_2}}{\lambda_1^{1/4}} |m|^{1/4} &
    |gh^2| &= \frac{|m|^{1/2}}{\sqrt{\lambda_1}} \\
    |g|^2|h|^2 &= \frac{\lambda_2}{\sqrt{\lambda_1}} |m|^{1/2} &
    |g|^2|h|^4 &= \frac{|m|}{\lambda_1}
\end{align*}
\end{lemma}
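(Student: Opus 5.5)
The plan is to verify each identity directly from the multiplicativity of the complex absolute value. The only inputs are $|m| = |f|\,|g|^2\,|h|^3$ (from $m = fg^2h^3$) and the definitions $\lambda_1 = |f|/|h|$, $\lambda_2 = |g|$. The idea is to first express $|f|$ and $|g|$ through $|h|$ and the parameters, solve for $|h|$, and then obtain every other identity by substitution.

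\emph{Step 1: solve for $|h|$.} Substituting $|f| = \lambda_1 |h|$ and $|g| = \lambda_2$ gives
\[
|m| = \lambda_1|h|\cdot\lambda_2^2\cdot|h|^3 = \lambda_1\lambda_2^2\,|h|^4 .
\]
Since all quantities are positive reals, taking fourth roots yields $|h| = |m|^{1/4}\lambda_1^{-1/4}\lambda_2^{-1/2}$, which is the first identity.

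\emph{Step 2: derive the remaining identities by substitution.}
\begin{itemize}
\item $|f||h| = \lambda_1|h|^2 = \lambda_1\cdot|m|^{1/2}\lambda_1^{-1/2}\lambda_2^{-1} = \sqrt{\lambda_1}\,|m|^{1/2}/\lambda_2$.
\item $|gh| = \lambda_2|h| = \lambda_2^{1/2}\lambda_1^{-1/4}|m|^{1/4}$.
\item $|gh^2| = \lambda_2|h|^2 = \lambda_2\cdot|m|^{1/2}\lambda_1^{-1/2}\lambda_2^{-1} = |m|^{1/2}/\sqrt{\lambda_1}$.
\item $|g|^2|h|^2 = |gh|^2$ and $|g|^2|h|^4 = |gh^2|^2$, so these are the squares of the two preceding identities.
\end{itemize}

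There is no real obstacle here. The only point requiring attention is the bookkeeping of fractional exponents, and the consistency check $|m|^{1/2}/|gh|^2 = \sqrt{\lambda_1}/\lambda_2$ reproduces the ratio $c/b$ computed just before Definition~\ref{shape parameter defn}.
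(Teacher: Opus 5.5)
Your proposal is correct and follows the same route as the paper: you substitute $|f|=\lambda_1|h|$ and $|g|=\lambda_2$ into $|m|=|f||g|^2|h|^3$, solve for $|h|$, and read off the remaining identities, with every exponent checking out. The paper's proof is the one-line version of this same direct substitution, and your explicit bookkeeping (including the $c/b$ consistency check) just spells it out.
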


\begin{proof}
Using the relation $|m| = |f||g|^2|h|^3$ along with the definitions $\lambda_1 = |f|/|h|$ and $\lambda_2 = |g|$, the equalities follow immediately by direct substitution.
\end{proof}
\noindent Thus we find that 
\[
\frac{1}{8|m|^{1/2}}\cdot  G_{11}^{(4)}=\begin{pmatrix}
\frac{1}{|m|^{1/2}} & 0 & 0 & 0 \\
0 & 1 & 0 & 0 \\
0 & 0 & \frac{\sqrt{\lambda_1}}{\lambda_2} & 0 \\
0 & 0 & 0 & \lambda_1
\end{pmatrix},
\]
and thus as $|m|\rightarrow \infty$, the $(1,1)$ entry of the above matrix approaches $0$. Thus in the limit $|m|\rightarrow \infty$, the shape matrix approaches a diagonal matrix whose entries are determined by $\lambda_1$ and $\lambda_2$.

\begin{proposition} \label{prop:gram_cases_other}
Let $G_{11}^{(8)}= \text{diag}(8, 8A, 8B, 8A\lambda, 8, 8A, 8B, 8A\lambda)$ denote the diagonal Gram matrix associated with the basis of Case 11, where $A = |m|^{1/2}, \quad B = |fh|, \quad \lambda = \frac{|f|}{|h|}$. For each case $k \in \{3, 4, 5, 6, 12\}$, let $C_k$ denote the transition matrix representing the change of basis from the Case 11 basis to the Case $k$ basis. The corresponding Gram matrix $G_k^{(8)}$ for Case $k$ is given by $G_k^{(8)} = C_k^T G_{11}^{(8)} C_k,$
where $C_k$ and $G_{k}^{(8)}$ for $k\in \{3,4,5,6,12\}$ are given by

\begin{minipage}[t]{0.4\textwidth}
    \vspace{5pt}
    { $C_{12}$ =  
    $
    \begin{pmatrix}
    1 & 0 & 0 & 0 & 0 & 0 & 0 & 0 \\
    0 & 1 & 0 & \frac{1}{2} & 0 & 0 & 0 & -\frac{1}{2} \\
    0 & 0 & 1 & 0 & 0 & 0 & 0 & 0 \\
    0 & 0 & 0 & \frac{1}{2} & 0 & 0 & 0 & \frac{1}{2} \\
    0 & 0 & 0 & 0 & 1 & 0 & 0 & 0 \\
    0 & 0 & 0 & \frac{1}{2} & 0 & 1 & 0 & \frac{1}{2} \\
    0 & 0 & 0 & 0 & 0 & 0 & 1 & 0 \\
    0 & 0 & 0 & -\frac{1}{2} & 0 & 0 & 0 & \frac{1}{2} 
    \end{pmatrix}
    $
    }
\end{minipage}
\hfill
\begin{minipage}[t]{0.6\textwidth}
    \centering
    \vspace{5pt}
    {$G_{12}^{(8)}$ =  
    $
    \begin{pmatrix}
    8 & 0 & 0 & 0 & 0 & 0 & 0 & 0 \\
    0 & 8A & 0 & 4A & 0 & 0 & 0 & -4A \\
    0 & 0 & 8B & 0 & 0 & 0 & 0 & 0 \\
    0 & 4A & 0 & 4A(1+\lambda) & 0 & 4A & 0 & 0 \\
    0 & 0 & 0 & 0 & 8 & 0 & 0 & 0 \\
    0 & 0 & 0 & 4A & 0 & 8A & 0 & 4A \\
    0 & 0 & 0 & 0 & 0 & 0 & 8B & 0 \\
    0 & -4A & 0 & 0 & 0 & 4A & 0 & 4A(1+\lambda)
    \end{pmatrix}
    $
    }
\end{minipage}

\vspace{0.5cm}
\begin{minipage}[t]{0.37\textwidth}
    \vspace{5pt}
    { 
    $ C_{3}=
    \begin{pmatrix}
    1 & 0 & \frac{1}{2} & 0 & 0 & 0 & 0 & 0 \\
    0 & 1 & 0 & \frac{1}{2} & 0 & 0 & 0 & 0 \\
    0 & 0 & \frac{1}{2} & 0 & 0 & 0 & 0 & 0 \\
    0 & 0 & 0 & \frac{1}{2} & 0 & 0 & 0 & 0 \\
    0 & 0 & 0 & 0 & 1 & 0 & \frac{1}{2} & 0 \\
    0 & 0 & 0 & 0 & 0 & 1 & 0 & \frac{1}{2} \\
    0 & 0 & 0 & 0 & 0 & 0 & \frac{1}{2} & 0 \\
    0 & 0 & 0 & 0 & 0 & 0 & 0 & \frac{1}{2} 
    \end{pmatrix}
    $
    }
\end{minipage}
\hfill
\begin{minipage}[t]{0.63\textwidth}
    \vspace{5pt}
    {
    $G_{3}^{(8)}=
    \begin{pmatrix}
    8 & 0 & 4 & 0 & 0 & 0 & 0 & 0 \\
    0 & 8A & 0 & 4A & 0 & 0 & 0 & 0 \\
    4 & 0 & 2+2B & 0 & 0 & 0 & 0 & 0 \\
    0 & 4A & 0 & 2A(1+\lambda) & 0 & 0 & 0 & 0 \\
    0 & 0 & 0 & 0 & 8 & 0 & 4 & 0 \\
    0 & 0 & 0 & 0 & 0 & 8A & 0 & 4A \\
    0 & 0 & 0 & 0 & 4 & 0 & 2+2B & 0 \\
    0 & 0 & 0 & 0 & 0 & 4A & 0 & 2A(1+\lambda)
    \end{pmatrix}
    $
    }
\end{minipage}

\vspace{0.5cm}
\begin{minipage}[t]{0.38\textwidth}
    \vspace{5pt}
    { 
    $C_4 = 
    \begin{pmatrix}
    1 & 0 & 0 & 0 & 0 & 0 & -\frac{1}{2} & 0 \\
    0 & 1 & 0 & 0 & 0 & 0 & 0 & -\frac{1}{2} \\
    0 & 0 & \frac{1}{2} & 0 & 0 & 0 & 0 & 0 \\
    0 & 0 & 0 & \frac{1}{2} & 0 & 0 & 0 & 0 \\
    0 & 0 & \frac{1}{2} & 0 & 1 & 0 & 0 & 0 \\
    0 & 0 & 0 & \frac{1}{2} & 0 & 1 & 0 & 0 \\
    0 & 0 & 0 & 0 & 0 & 0 & \frac{1}{2} & 0 \\
    0 & 0 & 0 & 0 & 0 & 0 & 0 & \frac{1}{2} 
    \end{pmatrix}
    $
    }
\end{minipage}
\hfill
\begin{minipage}[t]{0.62\textwidth}
    \vspace{5pt}
    { 
    $G_{4}^{(8)} = 
    \begin{pmatrix}
    8 & 0 & 0 & 0 & 0 & 0 & -4 & 0 \\
    0 & 8A & 0 & 0 & 0 & 0 & 0 & -4A \\
    0 & 0 & 2+2B & 0 & 4 & 0 & 0 & 0 \\
    0 & 0 & 0 & 2A(1+\lambda) & 0 & 4A & 0 & 0 \\
    0 & 0 & 4 & 0 & 8 & 0 & 0 & 0 \\
    0 & 0 & 0 & 4A & 0 & 8A & 0 & 0 \\
    -4 & 0 & 0 & 0 & 0 & 0 & 2+2B & 0 \\
    0 & -4A & 0 & 0 & 0 & 0 & 0 & 2A(1+\lambda)
    \end{pmatrix}
    $
    }
\end{minipage}

\vspace{0.5cm}
\begin{minipage}[t]{0.38\textwidth}
    \vspace{5pt}
    { 
    $C_5 = 
    \begin{pmatrix}
    1 & 0 & \frac{1}{2} & 0 & 0 & 0 & \frac{1}{2} & 0 \\
    0 & 1 & 0 & \frac{1}{2} & 0 & 0 & 0 & 0 \\
    0 & 0 & \frac{1}{2} & 0 & 0 & 0 & \frac{1}{2} & 0 \\
    0 & 0 & 0 & \frac{1}{2} & 0 & 0 & 0 & 0 \\
    0 & 0 & -\frac{1}{2} & 0 & 1 & 0 & \frac{1}{2} & 0 \\
    0 & 0 & 0 & 0 & 0 & 1 & 0 & \frac{1}{2} \\
    0 & 0 & -\frac{1}{2} & 0 & 0 & 0 & \frac{1}{2} & 0 \\
    0 & 0 & 0 & 0 & 0 & 0 & 0 & \frac{1}{2} 
    \end{pmatrix}
    $
    }
\end{minipage}
\hfill
\begin{minipage}[t]{0.62\textwidth}
    \vspace{5pt}
    { 
    $G_{5}^{(8)} = 
    \begin{pmatrix}
    8 & 0 & 4 & 0 & 0 & 0 & 4 & 0 \\
    0 & 8A & 0 & 4A & 0 & 0 & 0 & 0 \\
    4 & 0 & 4+4B & 0 & -4 & 0 & 0 & 0 \\
    0 & 4A & 0 & 2A(1+\lambda) & 0 & 0 & 0 & 0 \\
    0 & 0 & -4 & 0 & 8 & 0 & 4 & 0 \\
    0 & 0 & 0 & 0 & 0 & 8A & 0 & 4A \\
    4 & 0 & 0 & 0 & 4 & 0 & 4+4B & 0 \\
    0 & 0 & 0 & 0 & 0 & 4A & 0 & 2A(1+\lambda)
    \end{pmatrix}
    $
    }
\end{minipage}
\vspace{0.5cm}
\begin{minipage}[t]{0.38\textwidth}
    \vspace{5pt}
    { 
    $C_6 = 
    \begin{pmatrix}
    1 & 0 & \frac{1}{2} & 0 & 0 & 0 & \frac{1}{2} & 0 \\
    0 & 1 & 0 & 0 & 0 & 0 & 0 & -\frac{1}{2} \\
    0 & 0 & \frac{1}{2} & 0 & 0 & 0 & \frac{1}{2} & 0 \\
    0 & 0 & 0 & \frac{1}{2} & 0 & 0 & 0 & 0 \\
    0 & 0 & -\frac{1}{2} & 0 & 1 & 0 & \frac{1}{2} & 0 \\
    0 & 0 & 0 & \frac{1}{2} & 0 & 1 & 0 & 0 \\
    0 & 0 & -\frac{1}{2} & 0 & 0 & 0 & \frac{1}{2} & 0 \\
    0 & 0 & 0 & 0 & 0 & 0 & 0 & \frac{1}{2} 
    \end{pmatrix}
    $
    }
\end{minipage}
\hfill
\begin{minipage}[t]{0.62\textwidth}
    \vspace{5pt}
    { 
    $G_{6}^{(8)} = 
    \begin{pmatrix}
    8 & 0 & 4 & 0 & 0 & 0 & 4 & 0 \\
    0 & 8A & 0 & 0 & 0 & 0 & 0 & -4A \\
    4 & 0 & 4+4B & 0 & -4 & 0 & 0 & 0 \\
    0 & 0 & 0 & 2A(1+\lambda) & 0 & 4A & 0 & 0 \\
    0 & 0 & -4 & 0 & 8 & 0 & 4 & 0 \\
    0 & 0 & 0 & 4A & 0 & 8A & 0 & 0 \\
    4 & 0 & 0 & 0 & 4 & 0 & 4+4B & 0 \\
    0 & -4A & 0 & 0 & 0 & 0 & 0 & 2A(1+\lambda)
    \end{pmatrix}
    $
    }
\end{minipage}
\end{proposition}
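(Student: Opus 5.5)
The plan is to reduce everything to the already-computed diagonal matrix $G_{11}^{(8)}$ by expressing each Case $k$ basis in terms of the Case 11 $\Z$-basis $\{e_1,\dots,e_8\}=\{1,\alpha,\alpha^2/(gh),\alpha^3/(gh^2),i,i\alpha,i\alpha^2/(gh),i\alpha^3/(gh^2)\}$. First I will record the conventions. The $j$-th column of $C_k$ lists the coordinates, with respect to $e_1,\dots,e_8$, of the $j$-th element of the Case $k$ $\Z$-basis. That $\Z$-basis is obtained, as in Case 11, by taking the $\Z[i]$-basis $\{b_1,b_2,b_3,b_4\}$ from Table~\ref{tab:integral-bases} followed by $\{ib_1,\dots,ib_4\}$. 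Since $\langle\cdot,\cdot\rangle$ is real bilinear on $L_\R$ and the coordinates are real, the identity $G_k^{(8)}=C_k^TG_{11}^{(8)}C_k$ is then formal. So the content of the proposition is the following: (i) verifying each column of $C_k$, and (ii) carrying out the matrix product using the diagonal entries $8,8A,8B,8A\lambda$.

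For (i) I rewrite each table entry in the normalized monomials. For example, in Case 3 the third element is $\frac{gh+\alpha^2}{2gh}=\frac12 e_1+\frac12 e_3$, and the fourth is $\frac{\alpha+\alpha^3/gh^2}{2}=\frac12e_2+\frac12e_4$. Multiplying by $i$ shifts these to $e_5,e_7$ and $e_6,e_8$, which reproduces the columns of $C_3$. Case 4 uses $i\cdot e_1=e_5$. Cases 5 and 6 need the division $\frac{1}{1+i}=\frac{1-i}{2}$, which produces the $\pm\frac12$ entries in rows $3,5,7$. Case 12 similarly uses $\frac{i}{1+i}=\frac{1+i}{2}$. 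Before this, I must check the normalization of the diagonal. I note $\langle e_3,e_3\rangle=8|m|/|gh|^2=8|f||h|=8B$ and $\langle e_4,e_4\rangle=8|m|^{3/2}/|gh^2|^2=8|m|^{1/2}|f|/|h|=8A\lambda$, using $|m|=|f||g|^2|h|^3$ (Lemma~\ref{lem:iden}). This confirms the stated form of $G_{11}^{(8)}$.

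For (ii), because $G_{11}^{(8)}=\op{diag}(d_1,\dots,d_8)$, each entry is $(G_k^{(8)})_{rs}=\sum_j d_j(C_k)_{jr}(C_k)_{js}$, a short weighted dot product of two columns. For example, in $G_3^{(8)}$ the $(3,3)$ entry is $8\cdot\frac14+8B\cdot\frac14=2+2B$, and the $(4,4)$ entry is $8A\cdot\frac14+8A\lambda\cdot\frac14=2A(1+\lambda)$. Off-diagonal entries vanish unless two columns share support. I will organize the computation by noting that the index sets $\{1,3,5,7\}$ and $\{2,4,6,8\}$ (even versus odd powers of $\alpha$) are never mixed by any $C_k$. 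Consequently each $G_k^{(8)}$ splits into two $4\times4$ blocks, which halves the bookkeeping.

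The main obstacle is step (i): getting the signs right in Cases 5, 6 and 12, where the denominator $1+i$ mixes the real and $i$-components. The matrix multiplication itself is routine once the columns are correct. As a safeguard I will check that $\det C_k$ has the value predicted by the discriminant index: $|\det C_k|^{2}$ should equal $[\cO_L:\Z[1,\dots]]^{-2}$ relative to the Case 11 lattice. I will also check that each computed $G_k^{(8)}$ is symmetric and has $(1,1)$ and $(5,5)$ entries equal to $8$, as forced by $\langle 1,1\rangle=\langle i,i\rangle=8$.
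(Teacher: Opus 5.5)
Your proposal is correct and follows the paper's route. The paper's proof is only the direct computation of the columns of $C_k$ (the coordinates of $b_1,\dots,b_4,ib_1,\dots,ib_4$ in the Case 11 basis), followed by $G_k^{(8)}=C_k^TG_{11}^{(8)}C_k$; your column convention, the normalizations $\langle e_3,e_3\rangle=8B$ and $\langle e_4,e_4\rangle=8A\lambda$, and the parity splitting into the index sets $\{1,3,5,7\}$ and $\{2,4,6,8\}$ all agree with the stated matrices (one small slip: in Cases 5 and 6 the $\pm\frac12$ entries from $\frac{1}{1+i}=\frac{1-i}{2}$ occupy rows $1,3,5,7$ of columns $3$ and $7$, not only rows $3,5,7$).
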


\begin{proof}
    The proof follows via a direct computation of transition matrices and associated Gram matrices.
\end{proof}
\subsection{Gram matrices of Type II}  We say that $L$ is of \emph{Type II} if we are in one of the cases $1,2,7,8,9$ or $10$.
\noindent We compute below the Gram matrices \( G_{k}^{(8)} \) together with the corresponding transition matrices \( C_{k} \) for \( k \in \{1,2,7,8,9,10\} \). 

\begin{proposition}
Let $L = \mathbb{Q}(i, \sqrt[4]{m})$ be an octic Kummer extension satisfying Case 7 conditions  $(m \equiv 3+2i \pmod 4)$. Let $\mathcal{B}_{\mathbb{Z}} = \{u_1, \dots, u_4, \, i u_1, \dots, i u_4\}$ be the integral basis defined in Table 1.

The transition matrix $C_{7}$ expresses the Case 7 basis $\mathcal{B}_{\mathbb{Z}}$ in terms of the Case 11 basis, is given by:

\[
C_{7} = \begin{pmatrix}
1 & 0 & \text{Re}(c_1) & \text{Re}(c_2) & 0 & 0 & -\text{Im}(c_1) & -\text{Im}(c_2) \\ 
0 & 1 & 0 & \text{Re}(c_2) & 0 & 0 & 0 & -\text{Im}(c_2) \\ 
0 & 0 & 1/2 & \text{Re}(c_2 gh) & 0 & 0 & 1/2 & -\text{Im}(c_2 gh) \\ 
0 & 0 & 0 & 1/2 & 0 & 0 & 0 & 0 \\ 
0 & 0 & \text{Im}(c_1) & \text{Im}(c_2) & 1 & 0 & \text{Re}(c_1) & \text{Re}(c_2) \\ 
0 & 0 & 0 & \text{Im}(c_2) & 0 & 1 & 0 & \text{Re}(c_2) \\ 
0 & 0 & -1/2 & \text{Im}(c_2 gh) & 0 & 0 & 1/2 & \text{Re}(c_2 gh) \\ 
0 & 0 & 0 & 0 & 0 & 0 & 0 & 1/2 
\end{pmatrix}.
\]
where $c_1, c_2$ are defined as 
$$c_1 = \frac{\bar{g}\bar{h}}{1+i} \mbox{ and } c_2 = \frac{\bar{g}\bar{h}^2}{2}.$$
The Gram matrix $G_{7}^{(8)} = (\langle b_r, b_s \rangle)_{1\le r,s \le 8}$ associated with the ordered basis $\mathcal{B}_{\mathbb{Z}}$ is the block matrix
\[
G_{7}^{(8)} = \begin{pmatrix} M & N \\ -N & M \end{pmatrix},
\]
where $M$ is symmetric and $N$ is antisymmetric. The block entries are given by:
\[
M = \begin{pmatrix}
8 & 0 & 4\text{Re}(gh(1+i)) & 4\text{Re}(gh^2) \\
0 & 8|m|^{1/2} & 0 & 4|m|^{1/2}\text{Re}(gh^2) \\
4\text{Re}(gh(1+i)) & 0 & 4(|g|^2|h|^2 + |f||h|) & 2\Omega_7\text{Re}(h(1-i)) \\
4\text{Re}(gh^2) & 4|m|^{1/2}\text{Re}(gh^2) & 2\Omega_7\text{Re}(h(1-i)) & \Sigma_7
\end{pmatrix}
\]
and
\[
N = \begin{pmatrix}
0 & 0 & 4\text{Im}(gh(1+i)) & 4\text{Im}(gh^2) \\
0 & 0 & 0 & 4|m|^{1/2}\text{Im}(gh^2) \\
-4\text{Im}(gh(1+i)) & 0 & 0 & 2\Omega_7\text{Im}(h(1-i)) \\
-4\text{Im}(gh^2) & -4|m|^{1/2}\text{Im}(gh^2) & -2\Omega_7\text{Im}(h(1-i)) & 0
\end{pmatrix},
\]
where $\Omega_7$ and $\Sigma_7$ are given by:
\begin{align*}
\Omega_7 &= |g|^2|h|^2 + |m|, \\
\Sigma_7 &= 2|g|^2|h|^4 (1 + |m|^{1/2} + |m|) + \frac{2|m|^{3/2}}{|g|^2|h|^4}.
\end{align*}
\end{proposition}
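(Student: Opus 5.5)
We first identify the Case 7 basis in $\Z$-coordinates relative to the Case 11 basis. Write the Case 11 $\Z[i]$-basis as $\beta_0=1$, $\beta_1=\alpha$, $\beta_2=\alpha^2/(gh)$, $\beta_3=\alpha^3/(gh^2)$. By Table~\ref{tab:integral-bases}, the Case 7 basis is $u_1=1$, $u_2=\alpha$, and
\[
u_3=\frac{|gh|^2+\alpha^2}{(1+i)gh},\qquad u_4=\frac{|gh^2|^2(1+\alpha+\alpha^2)+\alpha^3}{2gh^2}.
\]
Using $|gh|^2/(gh)=\bar g\bar h$, we get
\[
u_3=c_1+\tfrac{1}{1+i}\beta_2=c_1+\tfrac{1-i}{2}\beta_2,
\]
and similarly
\[
u_4=c_2+c_2\alpha+c_2gh\,\beta_2+\tfrac12\beta_3 .
\]
To pass from a $\Z[i]$-expansion to a $\Z$-expansion in the basis $\{\beta_j,i\beta_j\}$, we use that a coefficient $x+iy$ contributes $x$ to $\beta_j$ and $y$ to $i\beta_j$. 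For $iu_r$, the coefficient $i(x+iy)=-y+ix$ gives the rotated entries. Reading off the columns then yields $C_7$ exactly as displayed. The entries $\pm\tfrac12$ in the $\beta_2, i\beta_2$ rows come from $(1-i)/2$ and $i(1-i)/2=(1+i)/2$. We only have to fix the sign and orientation conventions, namely that columns correspond to the new basis vectors, to match the display.

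Next, instead of multiplying $C_7^TG_{11}^{(8)}C_7$ out entry by entry, we exploit the $\Z[i]$-structure. For $x,y\in L$, set
\[
H(x,y)=\sum_{k}\tau_{k,+}(x)\overline{\tau_{k,+}(y)}.
\]
Since $\tau_{k,-}$ is the complex conjugate of $\tau_{-k,+}$, we obtain
\[
\langle x,y\rangle=H(x,y)+\overline{H(x,y)}=2\op{Re}H(x,y).
\]
The form $H$ is sesquilinear over $\Z[i]$. Moreover, $H(\beta_a,\beta_b)=0$ for $a\neq b$, and $H(\beta_a,\beta_a)$ equals half the diagonal entry of $G_{11}^{(4)}$. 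It follows that
\[
\langle u_r,u_s\rangle=2\op{Re}H(u_r,u_s),\qquad \langle u_r,iu_s\rangle=2\op{Re}\bigl(-i\,H(u_r,u_s)\bigr)=2\op{Im}H(u_r,u_s),
\]
and $\langle iu_r,iu_s\rangle=\langle u_r,u_s\rangle$. This immediately gives the block form $\begin{pmatrix}M&N\\-N&M\end{pmatrix}$ with $M=2\op{Re}\,\mathcal H$ and $N=\pm2\op{Im}\,\mathcal H$, where $\mathcal H=(H(u_r,u_s))$ is Hermitian. Symmetry of $M$ and antisymmetry of $N$ then follow at once, and the sign convention for $N$ is fixed by the $(1,3)$ entry.

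It therefore remains to compute the $4\times4$ Hermitian matrix $\mathcal H$. Using orthogonality of the $\beta_j$, each entry is a sum over shared components: $\sum_j a_j\bar b_j\,H(\beta_j,\beta_j)$, with $H(\beta_j,\beta_j)\in\{4,\,4|m|^{1/2},\,4|fh|,\,4|m|^{1/2}|f|/|h|\}$. For example,
\[
H(u_1,u_3)=4\bar c_1=4\,\frac{gh}{1-i}=2gh(1+i),
\]
which gives the $(1,3)$ entries $4\op{Re}(gh(1+i))$ and $4\op{Im}(gh(1+i))$. The remaining entries are handled the same way. To simplify them we use Lemma~\ref{lem:iden}, together with the identities $|m|^{1/2}|f|/|h|=|m|^{3/2}/|gh^2|^2$ and $|fh|=|m|/|gh|^2$.

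The main obstacle is the bookkeeping in the entries $(3,4)$ and $(4,4)$. The $(3,4)$ entry pairs the $\beta_0$ and $\beta_2$ components, giving
\[
4c_1\bar c_2+4|fh|\cdot\tfrac{1-i}{2}\,\overline{c_2gh}.
\]
This must be massaged into the form $\Omega_7\,h(1-i)$ up to conjugation, using $|gh|^2$ and $|m|=|fh||gh|^2$. The $(4,4)$ entry is a sum of four norms,
\[
4|c_2|^2+4|m|^{1/2}|c_2|^2+4|fh|\,|c_2gh|^2+|m|^{3/2}/|gh^2|^2,
\]
which must be matched with $\Sigma_7/2$. Here we will carefully check the normalizations $|c_2|^2=|gh^2|^2/4$ and $|fh|\,|gh|^2=|m|$, since this is where factor-of-two discrepancies with the stated $\Sigma_7$ are most likely to appear.
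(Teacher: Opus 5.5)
Your proposal is correct, and it takes a different route from the paper. The paper gives no argument for this proposition beyond the implicit recipe $G_7^{(8)}=C_7^{T}G_{11}^{(8)}C_7$, i.e.\ the ``direct computation'' invoked for the Type I cases.

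You instead pass to the $\Z[i]$-sesquilinear form $H(x,y)=\sum_{\tau(i)=i}\tau(x)\overline{\tau(y)}$. Then $\langle x,y\rangle=2\op{Re}H(x,y)$, and the $8\times 8$ Gram matrix is the realification of the $4\times 4$ Hermitian matrix $\mathcal H=(H(u_r,u_s))$, with $M=2\op{Re}\mathcal H$ and $N=2\op{Im}\mathcal H$.

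Your route has three advantages over the paper's:
\begin{enumerate}
\item It explains the block shape $\begin{pmatrix}M&N\\-N&M\end{pmatrix}$, together with the symmetry of $M$ and the antisymmetry of $N$; the paper only asserts these.
\item It reduces the work to ten complex entries, read off directly from the $\Z[i]$-coefficients of $u_3,u_4$. You then need $C_7$ only for the first assertion of the statement.
\item It applies verbatim to Cases 1, 2, 8, 9 and 10.
\end{enumerate}
The paper's matrix-product route is more mechanical and uniform across cases, and it produces $C_7$ as a byproduct.

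The bookkeeping you left open closes with no discrepancy.
\begin{itemize}
\item For the $(3,4)$ entry, $4c_1\bar c_2=|g|^2|h|^2h(1-i)$ and $\frac{4|fh|}{1+i}\overline{c_2gh}=|m|h(1-i)$. Hence $H(u_3,u_4)=\Omega_7\,h(1-i)$ exactly, with no conjugation needed.
\item For the $(4,4)$ entry, your four-term sum equals $|g|^2|h|^4(1+|m|^{1/2}+|m|)+|m|^{3/2}/(|g|^2|h|^4)=\Sigma_7/2$.
\end{itemize}

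One phrasing point: the identity $\langle x,y\rangle=2\op{Re}H(x,y)$ only uses that the four embeddings with $\tau(i)=-i$ are the complex conjugates of the four with $\tau(i)=i$. State it that way rather than as ``$\tau_{k,-}=\overline{\tau_{-k,+}}$''. The paper's literal rule $\tau_{k,-}(\alpha)=i^k\alpha$ is not an embedding when $m\notin\R$, so with the paper's labels that equality is not literally true.
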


\begin{proposition}
Let $L = \mathbb{Q}(i, \sqrt[4]{m})$ be an octic Kummer extension satisfying Case 10 conditions $(m \equiv 5 \pmod 8$ or $m \equiv 5+4i \pmod 8)$. Let $\mathcal{B}_{\mathbb{Z}} = \{u_1, \dots, u_4, \, i u_1, \dots, i u_4\}$ be the integral basis defined in Table 1.

The transition matrix $C_{10}$ expresses the Case 10 basis $\mathcal{B}_{\mathbb{Z}}$ in terms of the Case 7 basis, is given by:

\[
C_{10} = \begin{pmatrix}
1 & 1/2 & 0 & 0 & 0 & 1/2 & 0 & 0 \\
0 & 1/2 & 0 & 0 & 0 & 1/2 & 0 & 0 \\
0 & 0 & 1/2 & 0 & 0 & 0 & -1/2 & 0 \\
0 & 0 & 0 & 1/2 & 0 & 0 & 0 & 1/2 \\
0 & -1/2 & 0 & 0 & 1 & 1/2 & 0 & 0 \\
0 & -1/2 & 0 & 0 & 0 & 1/2 & 0 & 0 \\
0 & 0 & 1/2 & 0 & 0 & 0 & 1/2 & 0 \\
0 & 0 & 0 & -1/2 & 0 & 0 & 0 & 1/2
\end{pmatrix}.
\]

The Gram matrix $G_{10}^{(8)} = (\langle b_r, b_s \rangle)_{1\le r,s \le 8}$ associated with the ordered basis $\mathcal{B}_{\mathbb{Z}}$ is the block matrix
\[
G_{10}^{(8)} = \begin{pmatrix} M & N \\ -N & M \end{pmatrix},
\]
where $M$ is symmetric and $N$ is antisymmetric. The block entries are given by:
\[
M = \begin{pmatrix}
8 & 4 & 4\text{Re}(gh) & 2\text{Re}(gh^2(1+i)) \\
4 & 4(1+|m|^{1/2}) & 2\text{Re}(gh(1-i)) & 2(1+|m|^{1/2})\text{Re}(gh^2) \\
4\text{Re}(gh) & 2\text{Re}(gh(1-i)) & 2|g|^2|h|^2 + 2|f||h| & \Omega_{10} \text{Re}(h(1+i)) \\
2\text{Re}(gh^2(1+i)) & 2(1+|m|^{1/2})\text{Re}(gh^2) & \Omega_{10} \text{Re}(h(1+i)) & \Sigma_{10}
\end{pmatrix}
\]
and
\[
N = \begin{pmatrix}
0 & 4 & 4\text{Im}(gh) & 2\text{Im}(gh^2(1+i)) \\
-4 & 0 & 2\text{Im}(gh(1-i)) & 2(1+|m|^{1/2})\text{Im}(gh^2) \\
-4\text{Im}(gh) & -2\text{Im}(gh(1-i)) & 0 & \Omega_{10} \text{Im}(h(1+i)) \\
-2\text{Im}(gh^2(1+i)) & -2(1+|m|^{1/2})\text{Im}(gh^2) & -\Omega_{10} \text{Im}(h(1+i)) & 0
\end{pmatrix},
\]
where $\Omega_{10}$ and $\Sigma_{10}$ are given by:
\begin{align*}
\Omega_{10} &= |g|^2|h|^2 + |m|, \\
\Sigma_{10} &= |g|^2|h|^4 (1 + |m|^{1/2} + |m|) + \frac{|m|^{3/2}}{|g|^2|h|^4}.
\end{align*}
\end{proposition}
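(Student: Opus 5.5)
The Case~10 proposition is a direct computation. I would organize it around the Case~7 Gram matrix $G_7^{(8)}$ from the previous proposition and the congruence $G_{10}^{(8)}=C_{10}^T G_7^{(8)} C_{10}$, but cross-check every entry against a direct evaluation of $\langle u_r,u_s\rangle$. The reason is that the two Gram matrices are taken with respect to different $m$'s satisfying different congruence conditions. So the transition matrix $C_{10}$ must be read as an identity of formal expressions in $\alpha$ over $K$, not literally as a change of basis within one field.

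The first step is structural: the block shape $\begin{pmatrix} M & N\\ -N & M\end{pmatrix}$. For $x,y\in L$, the embeddings $\tau_{k,-}$ act on $ix$ by multiplication by $-i$, so $\langle ix,iy\rangle=\langle x,y\rangle$. Moreover,
\[
\langle x,iy\rangle=\sum_k\bigl(-i\,\tau_{k,+}(x)\overline{\tau_{k,+}(y)}+i\,\tau_{k,-}(x)\overline{\tau_{k,-}(y)}\bigr).
\]
Grouping the $\tau_{k,+}$ and $\tau_{k,-}$ embeddings as complex conjugate pairs, this equals $2\,\mathrm{Im}\,\Phi(x,y)$, up to a sign convention. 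Here $\Phi(x,y):=\sum_k\tau_{k,+}(x)\overline{\tau_{k,+}(y)}$ is a Hermitian form, and similarly $\langle x,y\rangle=2\,\mathrm{Re}\,\Phi(x,y)$. Hence $M=2\,\mathrm{Re}\,H$ and $N=\pm 2\,\mathrm{Im}\,H$, where $H=(\Phi(u_r,u_s))_{r,s\le 4}$ is Hermitian. This gives the symmetry of $M$ and the antisymmetry of $N$ for free, and it reduces the proof to computing the $4\times4$ Hermitian matrix $H$.

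Next I would compute $H$. Write each basis element as $u_r=\sum_j \beta_{rj}\alpha^j$ with $\beta_{rj}\in K$. By the orthogonality $\sum_k i^{k(a-b)}=4\delta_{a\equiv b}$ already used in Case~11,
\[
\Phi(u_r,u_s)=4\sum_{j=0}^3\beta_{rj}\overline{\beta_{sj}}\,|m|^{j/2}.
\]
For the Case~10 basis the coefficients are as follows:
\begin{itemize}
\item $u_1=1$, so $\beta_1=(1,0,0,0)$;
\item $u_2=(1+\alpha)/(1+i)$, so $\beta_2=(1+i)^{-1}(1,1,0,0)$;
\item $u_3=(|gh|^2+\alpha^2)/(2gh)$, so $\beta_3=(\bar g\bar h/2,\,0,\,1/(2gh),\,0)$;
\item $u_4$ has $\beta_4=c\,(|gh^2|^2,|gh^2|^2,|gh^2|^2,1)$ with $c=1/(2(1+i)gh^2)$.
\end{itemize}
Then each entry follows from this formula. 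For example, $\Phi(u_1,u_3)=4\cdot gh/2$, which gives the entry $4\mathrm{Re}(gh)$. Also $\Phi(u_2,u_2)=4\cdot\tfrac12(1+|m|^{1/2})$, which gives the entry $4(1+|m|^{1/2})$ after doubling. For $\Phi(u_3,u_4)$ and $\Phi(u_4,u_4)$, I would simplify powers of $|m|$ using $|m|=|f||g|^2|h|^3$ (Lemma~\ref{lem:iden}). This produces $\Omega_{10}=|g|^2|h|^2+|m|$ with the factor $h(1+i)$ coming from $\overline{c}\cdot gh$-type products, and $\Sigma_{10}=|g|^2|h|^4(1+|m|^{1/2}+|m|)+|m|^{3/2}/(|g|^2|h|^4)$. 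I would then verify that the same matrix results from multiplying out $C_{10}^TG_7^{(8)}C_{10}$, which confirms that $C_{10}$ as displayed is the correct transition matrix.

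The main obstacle is bookkeeping, and specifically the entries involving $u_4$. Several phases and conjugates enter there, such as $(1+i)$ versus $(1-i)$ and $gh^2$ versus $\overline{gh^2}$, and the overall sign convention for $N$ must match the stated matrix. I would fix the sign of $N$ once from the $(1,2)$ entry: $\Phi(1,u_2)=4/(1-i)=2(1+i)$, so $\mathrm{Im}=2$ gives $N_{12}=4$. This pins down the convention, and I would use that convention uniformly for all remaining entries. The $(1,2)$ and $(2,1)$ entries of $N$ ($\pm4$) also serve as a check that the relation $N^T=-N$ holds with the chosen orientation.
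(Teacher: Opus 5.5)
Your computation of the Gram matrix is correct, but it follows a different route from the paper. The paper gives no separate argument for Case~10. It presents $G_{10}^{(8)}$ as $C_{10}^{T}G_7^{(8)}C_{10}$, so it inherits the Case~7 Gram matrix (itself obtained from $G_{11}^{(8)}$ through $C_7$) and multiplies out $8\times 8$ matrices.

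You instead pass to the $4\times 4$ Hermitian matrix $H=(\Phi(u_r,u_s))$ and evaluate it with $\Phi(u_r,u_s)=4\sum_j\beta_{rj}\overline{\beta_{sj}}|m|^{j/2}$. This has three advantages:
\begin{itemize}
\item it explains the block shape $\begin{pmatrix} M & N\\ -N & M\end{pmatrix}$ structurally;
\item it reduces the work to ten entries of a Hermitian matrix;
\item it makes Case~10 independent of the Case~7 proposition.
\end{itemize}
The paper's route, in exchange, is uniform across all cases once one Gram matrix is known, and it shows how the lattices of different cases sit inside one another. Carrying out your plan gives $H_{12}=2(1+i)$, $H_{13}=2gh$, $H_{14}=(1+i)gh^2$, $H_{23}=(1-i)gh$, $H_{24}=(1+|m|^{1/2})gh^2$, $H_{33}=|g|^2|h|^2+|f||h|$, $H_{34}=\tfrac{1+i}{2}\,h\,\Omega_{10}$ and $H_{44}=\tfrac12\Sigma_{10}$, as required.

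Two steps should be tightened.

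First, the sign of $N$ is not a convention. Because the $\tau_{k,-}$ are exactly the complex conjugates of the $\tau_{k,+}$, one has $\langle x,iy\rangle=-i\,\Phi(x,y)+i\,\overline{\Phi(x,y)}=2\,\mathrm{Im}\,\Phi(x,y)$. So $N=2\,\mathrm{Im}\,H$ is forced by the definitions. Choosing the sign to match the stated $(1,2)$ entry would be circular if there really were a choice.

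Second, agreement of $C_{10}^{T}G_7^{(8)}C_{10}$ with $G_{10}^{(8)}$ does not show that $C_{10}$ is the transition matrix. Multiplication by $i$ is an isometry, so the basis $(iu_1,\dots,iu_4,-u_1,\dots,-u_4)$ has the same Gram matrix but a different transition matrix. Check $C_{10}$ directly instead. Writing $v_r$ for the Case~7 elements, you have $u_1=v_1$, $u_2=\tfrac{1-i}{2}(v_1+v_2)$, $u_3=\tfrac{1+i}{2}v_3$ and $u_4=\tfrac{1-i}{2}v_4$; multiplying by $i$ gives the last four columns.

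These are identities between elements of the single field $L$. The $v_r$ lie in $\mathcal{O}_L$, since $v_3=(1-i)u_3$ and $v_4=(1+i)u_4$, even though they need not form an integral basis. So your worry that $C_{10}$ relates bases for different values of $m$ is unnecessary.
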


\begin{proposition}
Let $L = \mathbb{Q}(i, \sqrt[4]{m})$ be an octic Kummer extension satisfying Case 8 conditions $(m \equiv 1+2i \pmod 4)$. Let $\mathcal{B}_{\mathbb{Z}} = \{u_1, \dots, u_4, \, i u_1, \dots, i u_4\}$ be the integral basis defined in Table 1.

The transition matrix $C_{8}$ expresses the Case 8 basis $\mathcal{B}_{\mathbb{Z}}$ in terms of the Case 11 basis, is given by:

\[
C_{8} = \begin{pmatrix}
1 & 0 & \text{Re}(c_1) & 0 & 0 & 0 & -\text{Im}(c_1) & 0 \\
0 & 1 & \text{Re}(c_2) & \text{Re}(c_3) & 0 & 0 & -\text{Im}(c_2) & -\text{Im}(c_3) \\
0 & 0 & 1/2 & \text{Re}(c_4) & 0 & 0 & 0 & -\text{Im}(c_4) \\
0 & 0 & 0 & 1/2 & 0 & 0 & 0 & 0 \\
0 & 0 & \text{Im}(c_1) & 0 & 1 & 0 & \text{Re}(c_1) & 0 \\
0 & 0 & \text{Im}(c_2) & \text{Im}(c_3) & 0 & 1 & \text{Re}(c_2) & \text{Re}(c_3) \\
0 & 0 & 0 & \text{Im}(c_4) & 0 & 0 & 1/2 & \text{Re}(c_4) \\
0 & 0 & 0 & 0 & 0 & 0 & 0 & 1/2
\end{pmatrix},
\]
where  $c_1,c_2,c_3,c_4$ are defined as:
\[
c_1 = \frac{\bar{g}\bar{h}}{2}, \quad c_2 = \frac{\bar{g}\bar{h}(1+i)}{2}, \quad c_3 = \frac{\bar{g}\bar{h}^2}{2}, \quad c_4 = \frac{|g|^2|h|^2 \bar{h}(1+i)}{2}.
\]

The Gram matrix $G_{8}^{(8)} = (\langle b_r, b_s \rangle)_{1\le r,s \le 8}$ associated with the ordered basis $\mathcal{B}_{\mathbb{Z}}$ is the block matrix
\[
G_{8}^{(8)} = \begin{pmatrix} M & N \\ -N & M \end{pmatrix},
\]
where $M$ is symmetric and $N$ is antisymmetric. The block entries are given by:
\[
M = \begin{pmatrix}
8 & 0 & 4\text{Re}(gh) & 0 \\
0 & 8|m|^{1/2} & 4|m|^{1/2}\text{Re}(gh(1-i)) & 4|m|^{1/2}\text{Re}(gh^2) \\
4\text{Re}(gh) & 4|m|^{1/2}\text{Re}(gh(1-i)) & \Omega_8 & \text{Re}(\Lambda_8) \\
0 & 4|m|^{1/2}\text{Re}(gh^2) & \text{Re}(\Lambda_8) & \Sigma_8
\end{pmatrix}
\]
and
\[
N = \begin{pmatrix}
0 & 0 & 4\text{Im}(gh) & 0 \\
0 & 0 & 4|m|^{1/2}\text{Im}(gh(1-i)) & 4|m|^{1/2}\text{Im}(gh^2) \\
-4\text{Im}(gh) & -4|m|^{1/2}\text{Im}(gh(1-i)) & 0 & \text{Im}(\Lambda_8) \\
0 & -4|m|^{1/2}\text{Im}(gh^2) & -\text{Im}(\Lambda_8) & 0
\end{pmatrix},
\]
where $\Omega_{8}$, $\Sigma_{8}$ and $\Lambda_{8}$ are given by:
\begin{align*}
\Omega_8 &= 2|g|^2|h|^2 (1 + 2|m|^{1/2}) + 2|f||h|, \\
\Sigma_8 &= 2|g|^2|h|^4 (|m|^{1/2} + 2|m|) + \frac{2|m|^{3/2}}{|g|^2|h|^4}, \\
\Lambda_8 &= 2|m|^{1/2}|g|^2|h|^2 h(1+i) + 2|m| h(1-i).
\end{align*}
\end{proposition}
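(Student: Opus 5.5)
The statement is a direct computation, and I will organise it as the product $G_8^{(8)}=C_8^T G_{11}^{(8)} C_8$, mirroring Proposition~\ref{prop:gram_cases_other}. The underlying principle is the following: if $b_s=\sum_r (C_8)_{rs} e_r$ with $e_r$ the Case 11 basis, then bilinearity gives $\langle b_r,b_s\rangle = \sum_{p,q}(C_8)_{pr}(C_8)_{qs}\langle e_p,e_q\rangle$. Moreover, $G_{11}^{(8)}$ is diagonal with entries $8,\,8|m|^{1/2},\,8|m|/|gh|^2,\,8|m|^{3/2}/|gh^2|^2$, repeated twice.

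\textbf{Step 1: the transition matrix.} I first justify the form of $C_8$ by expanding the Case 8 elements from Table~\ref{tab:integral-bases}. For the third element,
\[
u_3=\frac{|gh|^2(1+(1+i)\alpha)+\alpha^2}{2gh}=\frac{\bar g\bar h}{2}+\frac{\bar g\bar h(1+i)}{2}\,\alpha+\frac12\cdot\frac{\alpha^2}{gh},
\]
using $|gh|^2/(gh)=\bar g\bar h$. This gives the coefficients $c_1,c_2$ on $e_1,e_2$ and $1/2$ on $e_3$. Similarly, $u_4$ has coefficients $c_3=\bar g\bar h^2/2$ on $\alpha$ and $c_4=|gh|^2\bar h(1+i)/2$ on $\alpha^2/(gh)$, since
\[
\frac{|gh^2|^2(1+i)\alpha^2}{2gh^2}=\frac{|g|^2|h|^4(1+i)}{2h}\cdot\frac{\alpha^2}{gh}=c_4\,\frac{\alpha^2}{gh},
\]
and coefficient $1/2$ on $\alpha^3/(gh^2)$.

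To pass to real $\Z$-coordinates, a $\Z[i]$-coefficient $c=x+iy$ multiplying $e_r$ contributes $x e_r + y\,(ie_r)$. For the element $iu$, the same coefficient contributes $-y e_r + x\,(ie_r)$. This produces exactly the displayed real and imaginary parts pattern in $C_8$.

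\textbf{Step 2: the block structure.} Because multiplication by $i$ acts as a rotation commuting with the Hermitian structure, we have $\langle iu_r,iu_s\rangle=\langle u_r,u_s\rangle$ and $\langle u_r,iu_s\rangle=-\langle iu_r,u_s\rangle$. Writing $u_r=\sum_p \gamma_{pr}e_p$ with $\gamma_{pr}\in\Z[i]$, one gets
\[
\langle u_r,u_s\rangle=\sum_p \gamma_{pr}\overline{\gamma_{ps}}\,\langle e_p,e_p\rangle,
\]
a complex number whose real part gives $M$ and whose imaginary part gives $N$ (up to a sign convention). So I will compute the $4\times4$ Hermitian product $\Gamma^{*}\,\mathrm{diag}(8,8A,8B,8A\lambda)\,\Gamma$ in complex form instead of multiplying $8\times8$ real matrices. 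This immediately gives $M$ symmetric, $N$ antisymmetric, and the form $\begin{pmatrix}M&N\\-N&M\end{pmatrix}$.

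\textbf{Step 3: the entries.} I then simplify each entry using Lemma~\ref{lem:iden} and the relations $|m|=|f||g|^2|h|^3$, $B=|fh|$ and $A\lambda=|m|^{3/2}/|gh^2|^2$.

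1. Entry $(1,3)$ equals $8\cdot c_1$ up to conjugation, i.e.\ $4\,gh$.
2. Entry $(2,3)$ equals $8|m|^{1/2}c_2$ up to conjugation, i.e.\ $4|m|^{1/2}gh(1-i)$.
3. Entry $(2,4)$ gives $4|m|^{1/2}gh^2$.
4. Entry $(1,4)$ vanishes because $u_4$ has no constant term.
5. The diagonal entry $(3,3)$ is
\[
8|c_1|^2+8|m|^{1/2}|c_2|^2+8B/4=2|gh|^2+4|m|^{1/2}|gh|^2+2|fh|=\Omega_8.
\]
6. The entry $(3,4)$ is $8|m|^{1/2}c_2\bar c_3+8B\cdot\tfrac12\bar c_4$, simplified to $\Lambda_8$.
7. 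The entry $(4,4)$ is $8|m|^{1/2}|c_3|^2+8B|c_4|^2+2A\lambda$, simplified to $\Sigma_8$.

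\textbf{Main obstacle.} The routine work is the bookkeeping. The step I expect to be delicate is entry $(3,4)$ and the term $8B|c_4|^2$ in $\Sigma_8$, where one must track which factor is conjugated. In particular, $\Lambda_8$ as stated contains $h$ rather than $\bar h$, so the sign convention for $N$ has to be checked against the conjugation of $c_4$. The reduction of $8B|c_4|^2=4|fh|\,|g|^4|h|^6$ to $4|m|\,|g|^2|h|^4/|m|^{1/2}\cdot|m|^{1/2}$ also needs care; I would redo it via Lemma~\ref{lem:iden} and compare with the stated $\Sigma_8$, correcting constants if necessary.
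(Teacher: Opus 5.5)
Your proposal is correct and follows essentially the paper's route: compute $G_8^{(8)}=C_8^TG_{11}^{(8)}C_8$ from the diagonal Case 11 Gram matrix, streamlined by working with the $4\times 4$ form $2H(x,y)$, where $H(x,y)=\sum_{k=0}^{3}\tau_{k,+}(x)\overline{\tau_{k,+}(y)}$, whose real part is $\langle x,y\rangle$ and whose imaginary part is $\langle x,iy\rangle$ (your displayed identity should be read for $2H$, since the real pairing itself is real-valued). With the conjugation on the second slot, your items 1--7 already give exactly the stated entries, including $\Lambda_8$ (it contains $h$ because $c_2\bar c_3$ and $\bar c_4$ do) and $\Sigma_8=8|m|^{1/2}|c_3|^2+8B|c_4|^2+2A\lambda$, so no constants need correcting.
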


\begin{proposition}
Let $L = \mathbb{Q}(i, \sqrt[4]{m})$ be an octic Kummer extension satisfying Case 9 conditions $(m \equiv 3 \pmod 4)$. Let $\mathcal{B}_{\mathbb{Z}} = \{u_1, \dots, u_4, \, i u_1, \dots, i u_4\}$ be the integral basis defined in Table 1.

The transition matrix $C_{9}$, which expresses the Case 9 basis $\mathcal{B}_{\mathbb{Z}}$ in terms of the Case 11 diagonal basis, is given by:

\[
C_{9} = \begin{pmatrix}
1 & 0 & \text{Re}(c_1) & \text{Re}(c_2) & 0 & 0 & -\text{Im}(c_1) & -\text{Im}(c_2) \\
0 & 1 & 0 & \text{Re}(c_2) & 0 & 0 & 0 & -\text{Im}(c_2) \\
0 & 0 & 1/2 & \text{Re}(c_3) & 0 & 0 & 0 & -\text{Im}(c_3) \\
0 & 0 & 0 & 1/2 & 0 & 0 & 0 & 0 \\
0 & 0 & \text{Im}(c_1) & \text{Im}(c_2) & 1 & 0 & \text{Re}(c_1) & \text{Re}(c_2) \\
0 & 0 & 0 & \text{Im}(c_2) & 0 & 1 & 0 & \text{Re}(c_2) \\
0 & 0 & 0 & \text{Im}(c_3) & 0 & 0 & 1/2 & \text{Re}(c_3) \\
0 & 0 & 0 & 0 & 0 & 0 & 0 & 1/2
\end{pmatrix},
\]
where $c_1,c_2,c_3$ are defined as:
\[
c_1 = \frac{i\bar{g}\bar{h}}{2}, \quad c_2 = \frac{i\bar{g}\bar{h}^2}{2}, \quad c_3 = \frac{|g|^2 h \bar{h}^2}{2}.
\]

The Gram matrix $G_{9}^{(8)} = (\langle b_r, b_s \rangle)_{1\le r,s \le 8}$ associated with the ordered basis $\mathcal{B}_{\mathbb{Z}}$ is the block matrix
\[
G_{9}^{(8)} = \begin{pmatrix} M & N \\ -N & M \end{pmatrix},
\]
where $M$ is symmetric and $N$ is antisymmetric. The block entries are given by:
\[
M = \begin{pmatrix}
8 & 0 & 4\text{Im}(gh) & 4\text{Im}(gh^2) \\
0 & 8|m|^{1/2} & 0 & 4|m|^{1/2}\text{Im}(gh^2) \\
4\text{Im}(gh) & 0 & 2|g|^2|h|^2 + 2|f||h| & \Omega_9\text{Re}(h) \\
4\text{Im}(gh^2) & 4|m|^{1/2}\text{Im}(gh^2) & \Omega_9\text{Re}(h) & \Sigma_9
\end{pmatrix}
\]
and
\[
N = \begin{pmatrix}
0 & 0 & -4\text{Re}(gh) & -4\text{Re}(gh^2) \\
0 & 0 & 0 & -4|m|^{1/2}\text{Re}(gh^2) \\
4\text{Re}(gh) & 0 & 0 & \Omega_9\text{Im}(h) \\
4\text{Re}(gh^2) & 4|m|^{1/2}\text{Re}(gh^2) & -\Omega_9\text{Im}(h) & 0
\end{pmatrix},
\]
where $\Omega_{9}$ and $\Sigma_{9}$ are given by:
\begin{align*}
\Omega_9 &= 2(|g|^2|h|^2 + |m|), \\
\Sigma_9 &= 2|g|^2|h|^4 (1 + |m|^{1/2} + |m|) + \frac{2|m|^{3/2}}{|g|^2|h|^4}.
\end{align*}
\end{proposition}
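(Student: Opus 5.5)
Since $\mathcal B_{\Z}=\{u_1,\dots,u_4,iu_1,\dots,iu_4\}$, I would first write the Case 9 basis in Table~\ref{tab:integral-bases} in terms of the Case 11 basis $e_1,\dots,e_8$. This amounts to expanding
\[
u_3=\frac{i|gh|^2+\alpha^2}{2gh},\qquad u_4=\frac{|gh^2|^2(i+i\alpha+\alpha^2)+\alpha^3}{2gh^2}
\]
as $\Z[i]$-combinations (with rational coefficients) of $1,\alpha,\alpha^2/(gh),\alpha^3/(gh^2)$. For instance,
\[
u_3=\tfrac{i\bar g\bar h}{2}\cdot 1+\tfrac12\cdot\frac{\alpha^2}{gh},
\]
so the coefficient of $1$ is $c_1$. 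Similarly, the constant and $\alpha$ coefficients of $u_4$ are $i\bar g\bar h^2/2=c_2$. The $\alpha^2$ coefficient of $u_4$ is $|g|^2|h|^4/(2gh^2)\cdot gh=|g|^2 h\bar h^2/2=c_3$ relative to $\alpha^2/(gh)$. Finally, the coefficient of $\alpha^3/(gh^2)$ in $u_4$ is $1/2$.

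Each complex coefficient $c=\op{Re}(c)+i\op{Im}(c)$ multiplying $e_j$ contributes $\op{Re}(c)e_j+\op{Im}(c)\,ie_j$. Multiplying by $i$ sends $e_j\mapsto ie_j$ and $ie_j\mapsto -e_j$. This produces the columns of $C_9$ together with the $2\times2$ real-block pattern $\begin{pmatrix}\op{Re}&-\op{Im}\\ \op{Im}&\op{Re}\end{pmatrix}$, which yields exactly the stated matrix.

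Next, I would compute $G_9^{(8)}=C_9^T G_{11}^{(8)}C_9$, with $G_{11}^{(8)}$ diagonal as established in the Case 11 proposition. Rather than multiplying $8\times 8$ matrices, it is cleaner to work Hermitian-wise. Define
\[
H(x,y):=\sum_{k}\tau_{k,+}(x)\overline{\tau_{k,+}(y)},
\]
the sum over the four embeddings fixing $i$. Then $\langle x,y\rangle=2\op{Re}H(x,y)$, because the $\tau_{k,-}$ terms are the complex conjugates of the $\tau_{k,+}$ terms. Also $\langle x,iy\rangle=2\op{Re}(\bar i\,H(x,y))=2\op{Im}H(x,y)$. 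This immediately gives the block form $\begin{pmatrix}M&N\\-N&M\end{pmatrix}$ with
\[
M=2\op{Re}\,\mathcal H,\qquad N=\pm 2\op{Im}\,\mathcal H,
\]
where $\mathcal H=(H(u_r,u_s))$ is Hermitian. Hence $M$ is symmetric and $N$ is antisymmetric. The four monomials are $H$-orthogonal, and $H(\alpha^a,\alpha^a)=4|m|^{a/2}$. So $\mathcal H$ is computed from the $4\times4$ coefficient matrix $P$ via $\mathcal H=P^{T}D\bar P$, where
\[
D=\op{diag}\!\left(4,\ 4|m|^{1/2},\ \tfrac{4|m|}{|gh|^2},\ \tfrac{4|m|^{3/2}}{|gh^2|^2}\right).
\]

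Then I would evaluate the individual entries. For example, $H(u_1,u_3)=4\bar c_1$, giving $M_{13}=8\op{Re}(c_1)=4\op{Im}(gh)$ as stated, and $N_{13}$ comes from the imaginary part. The entries $(3,3)$, $(3,4)$ and $(4,4)$ are the main computational step. For $(3,3)$,
\[
H(u_3,u_3)=4|c_1|^2+\tfrac14\cdot\frac{4|m|}{|gh|^2}=|g|^2|h|^2+|f||h|,
\]
matching $2|g|^2|h|^2+2|f||h|$ after doubling. For $(3,4)$ and $(4,4)$, I would simplify using Lemma~\ref{lem:iden} together with $|m|=|f||g|^2|h|^3$ and $|c_3|=|g|^2|h|^3/2$. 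These simplifications should collapse to $\Omega_9\op{Re}(h)$, $\Omega_9\op{Im}(h)$ and $\Sigma_9$.

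I expect the main obstacle to be bookkeeping signs and conjugations. This includes which of $c$ or $\bar c$ appears, the sign convention for $N$ (the orientation of $\langle x,iy\rangle$ versus $\langle ix,y\rangle$), and making cross terms such as $c_2\bar c_3$ reduce to a real multiple of $h$ or $\bar h$. I would check these against the already-verified Case 7 computation, whose structure is identical.
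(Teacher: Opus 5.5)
Your proposal is correct. The paper gives no separate argument for this proposition; the intended verification, as for Type I, is the brute-force product $G_9^{(8)}=C_9^T G_{11}^{(8)} C_9$ with the diagonal Case 11 Gram matrix. Your derivation of $C_9$ is exactly what that route needs. Where you differ is in replacing the $8\times 8$ real product by the $4\times 4$ Hermitian computation $\mathcal H=P^TD\bar P$ over $\Z[i]$.

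\textbf{What your route buys.} The block shape $\begin{pmatrix}M&N\\-N&M\end{pmatrix}$, with $M$ symmetric and $N$ antisymmetric, falls out of the Hermitian symmetry of $\mathcal H$ instead of being observed after multiplying. Only ten complex entries then need to be computed. Your justification is also the right one. The embeddings with $i\mapsto -i$ are the complex conjugates of those with $i\mapsto i$: they send $\alpha$ to fourth roots of $\bar m$, not to $i^k\alpha$ as the paper's formula literally says. The paper's matrix route, for its part, needs nothing beyond $C_9$ and the already established $G_{11}^{(8)}$, and it treats all cases uniformly.

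\textbf{The sign of $N$.} The ambiguity you flag does not arise. The upper-right block consists of $\langle u_r,iu_s\rangle$, and your own identity gives $\langle u_r,iu_s\rangle=2\,\mathrm{Im}\,H(u_r,u_s)$. Hence $N=2\,\mathrm{Im}\,\mathcal H$ exactly. For example, $N_{13}=2\,\mathrm{Im}(4\bar c_1)=-4\,\mathrm{Re}(gh)$, as stated.

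\textbf{The remaining entries.} The entries you left unfinished do collapse as claimed:
\begin{itemize}
\item One has $4c_1\bar c_2=|g|^2|h|^2h$ and $2|f||h|\,\bar c_3=|m|h$. Hence $H(u_3,u_4)=(|g|^2|h|^2+|m|)h=\tfrac12\Omega_9 h$.
\item Using $4|c_3|^2|f||h|=|g|^2|h|^4|m|$, one gets $H(u_4,u_4)=|g|^2|h|^4(1+|m|^{1/2}+|m|)+|m|^{3/2}/(|g|^2|h|^4)=\tfrac12\Sigma_9$.
\end{itemize}
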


\begin{proposition}
Let $L = \mathbb{Q}(i, \sqrt[4]{m})$ be an octic Kummer extension satisfying Case 1 conditions $(m \equiv 1 \pmod 8)$. Let $\mathcal{B}_{\mathbb{Z}} = \{u_1, \dots, u_4, \, i u_1, \dots, i u_4\}$ be the integral basis defined in Table 1.

The transition matrix $C_{1}$, which expresses the Case 1 basis $\mathcal{B}_{\mathbb{Z}}$ in terms of the Case 11 diagonal basis, is given by:

\[
C_{1} = \begin{pmatrix}
1 & 1/2 & \text{Re}(c_1) & \text{Re}(c_3) & 0 & 1/2 & -\text{Im}(c_1) & -\text{Im}(c_3) \\
0 & 1/2 & \text{Re}(c_2) & \text{Re}(c_3) & 0 & 1/2 & -\text{Im}(c_2) & -\text{Im}(c_3) \\
0 & 0 & 1/4 & \text{Re}(c_4) & 0 & 0 & 1/4 & -\text{Im}(c_4) \\
0 & 0 & 0 & 1/4 & 0 & 0 & 0 & 0 \\
0 & -1/2 & \text{Im}(c_1) & \text{Im}(c_3) & 1 & 1/2 & \text{Re}(c_1) & \text{Re}(c_3) \\
0 & -1/2 & \text{Im}(c_2) & \text{Im}(c_3) & 0 & 1/2 & \text{Re}(c_2) & \text{Re}(c_3) \\
0 & 0 & -1/4 & \text{Im}(c_4) & 0 & 0 & 1/4 & \text{Re}(c_4) \\
0 & 0 & 0 & 0 & 0 & 0 & 0 & 1/4
\end{pmatrix},
\]
where $c_1,c_2,c_3$ and $c_4$ are defined as:
\[
c_1 = \frac{(1+i)\bar{g}\bar{h}}{4}, \quad c_2 = \frac{\bar{g}\bar{h}}{2}, \quad c_3 = \frac{\bar{g}\bar{h}^2}{4}, \quad c_4 = \frac{|g|^2 h \bar{h}^2}{4}.
\]

The Gram matrix $G_1^{(8)} = (\langle b_r, b_s \rangle)_{1\le r,s \le 8}$ associated with the ordered basis $\mathcal{B}_{\mathbb{Z}}$ is the block matrix
\[
G_1^{(8)} = \begin{pmatrix} M & N \\ -N & M \end{pmatrix},
\]
where $M$ is symmetric and $N$ is antisymmetric. The block entries are given by:
\[
M = \begin{pmatrix}
8 & 4 & 2\text{Re}(gh(1-i)) & 2\text{Re}(gh^2) \\
4 & 4(1+|m|^{1/2}) & \text{Re}(Z_1) & (1+|m|^{1/2})\text{Re}(gh^2(1-i)) \\
2\text{Re}(gh(1-i)) & \text{Re}(Z_1) & \Omega_1 & \text{Re}(\Lambda_1) \\
2\text{Re}(gh^2) & (1+|m|^{1/2})\text{Re}(gh^2(1-i)) & \text{Re}(\Lambda_1) & \Sigma_1
\end{pmatrix}
\]
and
\[
N = \begin{pmatrix}
0 & 4 & 2\text{Im}(gh(1-i)) & 2\text{Im}(gh^2) \\
-4 & 0 & \text{Im}(Z_1) & (1+|m|^{1/2})\text{Im}(gh^2(1-i)) \\
-2\text{Im}(gh(1-i)) & -\text{Im}(Z_1) & 0 & \text{Im}(\Lambda_1) \\
-2\text{Im}(gh^2) & -(1+|m|^{1/2})\text{Im}(gh^2(1-i)) & -\text{Im}(\Lambda_1) & 0
\end{pmatrix},
\]
where $\Omega_{1}$, $\Sigma_{1}$, $\Lambda_1$ and $Z_1$ are given by:
\begin{align*}
\Omega_1 &= |g|^2|h|^2(1 + 2|m|^{1/2}) + |f||h|, \\
\Sigma_1 &= \frac{1}{2}|g|^2|h|^4(1 + |m|^{1/2} + |m|) + \frac{|m|^{3/2}}{2|g|^2|h|^4}, \\
\Lambda_1 &= \frac{1}{2}|g|^2|h|^2 h(1+i) + |m|^{1/2}|g|^2|h|^2 h + \frac{1}{2}|m| h(1-i), \\
Z_1 &= -2igh + 2|m|^{1/2}gh(1-i).
\end{align*}
\end{proposition}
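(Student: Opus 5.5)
The statement is the Case~1 Gram matrix computation: $G_1^{(8)}=C_1^T G_{11}^{(8)} C_1$. I will prove it the same way as the other cases, by writing the Case~1 basis in terms of the diagonal Case~11 basis and then pushing the diagonal Gram matrix through the transition matrix.

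\textbf{Step 1: the transition matrix.} Write the Case~11 $\Z$-basis as $e_1,\dots,e_8$, where $e_{4+j}=ie_j$. Expand each Case~1 element $u_j$ (Table~\ref{tab:integral-bases}) as a $\Z[i]$-combination of $1,\alpha,\alpha^2/(gh),\alpha^3/(gh^2)$. For instance,
\[
u_3=\frac{|gh|^2\bigl(i+(1+i)\alpha\bigr)+\alpha^2}{2(1+i)gh}.
\]
Using $|gh|^2/(gh)=\bar g\bar h$, its $\alpha^2/(gh)$-coefficient is $1/(2(1+i))$ and its $\alpha$-coefficient is $\bar g\bar h/2=c_2$. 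Similarly, the $\alpha^3/(gh^2)$-coefficient of $u_4$ is $1/4$, and the other coefficients of $u_4$ produce $c_3$ and $c_4=|g|^2h\bar h^2/4$.

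A complex coefficient $c$ multiplying $e_j$ contributes $\op{Re}(c)$ on $e_j$ and $\op{Im}(c)$ on $ie_j$. Multiplying a basis element by $i$ sends $(\op{Re}\,c,\op{Im}\,c)$ to $(-\op{Im}\,c,\op{Re}\,c)$. This gives the block shape
\[
\begin{pmatrix} P&-Q\\ Q&P\end{pmatrix}
\]
of $C_1$ with $P=\op{Re}$ and $Q=\op{Im}$ of a single complex $4\times4$ matrix $T$. I will check the stated entries of $C_1$ against $T$ entry by entry.

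\textbf{Step 2: reduce to a Hermitian form.} Because $G_{11}^{(8)}=\op{diag}(D,D)$ with $D$ real diagonal, the product $C_1^TG_{11}^{(8)}C_1$ has the block form $\begin{pmatrix}M&N\\-N&M\end{pmatrix}$. Here
\[
M+iN=\overline{T}^{\,T}D\,T \quad(\text{or its conjugate, depending on convention}).
\]
Equivalently, for $u,v$ in the $\Z[i]$-basis,
\[
\langle u,v\rangle+i\langle u,iv\rangle=\sum_{k}D_{kk}\,t_{k,u}\overline{t_{k,v}}.
\]
So it suffices to compute the $4\times4$ Hermitian matrix $T^*DT$, with $D=\op{diag}(8,8|m|^{1/2},8|fh|,8|m|^{3/2}/|gh^2|^2)$. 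Its real part is $M$, which is symmetric, and its imaginary part is $\pm N$, which is antisymmetric.

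\textbf{Step 3: compute the entries.} Each entry of $T^*DT$ is a short sum of products of coefficients weighted by the $D_{kk}$. I will simplify using Lemma~\ref{lem:iden}, in particular $|m|^{1/2}=|g||h|^2\sqrt{|f||h|}$, together with $|m|/|gh^2|^2=|f|/|h|$ and $|gh|^2/(gh)=\bar g\bar h$. Some sample entries:
\begin{itemize}
\item $\langle u_1,u_1\rangle=8$, and the $(1,2)$ entry is $8\cdot\tfrac12\cdot\overline{\tfrac{1}{1+i}}$-type terms, giving $4$ together with $N_{12}=4$.
\item The $(2,2)$ entry is $8\cdot\tfrac12+8|m|^{1/2}\cdot\tfrac12=4(1+|m|^{1/2})$.
\item Entries involving $u_3,u_4$ produce $\Omega_1,\Sigma_1,\Lambda_1,Z_1$.
\end{itemize}
In $\Sigma_1$, for example, $|c_3|^2\cdot 8|m|^{1/2}$ and $|c_4|^2\cdot 8|fh|$ combine with the $1$ and $\alpha^0$ contributions to give the displayed expression.

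\textbf{Expected obstacle.} The main difficulty is bookkeeping: conjugation conventions for the $i$-parts (sign of $N$), and correctly combining cross terms such as $\overline{c_2}\,c_4$ weighted by $8|m|^{1/2}$ into $\Lambda_1$. I would fix the convention on a trivial entry like $\langle u_1,iu_2\rangle$ first. I would then verify the final matrix symbolically, or numerically for a sample $m\equiv1\pmod 8$ such as $m=17$ or $m=-15$, against a direct evaluation of $\sum_\tau\tau(u_r)\overline{\tau(u_s)}$.
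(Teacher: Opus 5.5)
Your plan is correct and matches the paper's implicit argument: the paper gives no proof beyond the direct evaluation of $C_1^{T}G_{11}^{(8)}C_1$, and your reduction to the $4\times 4$ Hermitian matrix $\mathcal H_{rs}=\sum_k D_{kk}\,t_{k,r}\overline{t_{k,s}}$, with $M=\op{Re}\mathcal H$ and $N=\op{Im}\mathcal H$, reproduces $C_1$ and every displayed entry of $G_1^{(8)}$ (your second formula has the right sign, e.g.\ $\mathcal H_{12}=8\cdot\overline{(1-i)/2}=4+4i$, so $M_{12}=N_{12}=4$). There are two slips to fix when you write it out: the identity you quote is off by a factor of $|h|$, since in fact $|m|^{1/2}=|g|\,|h|\sqrt{|f||h|}$, and the entries only need $|fh|\cdot|gh|^2=|m|$ and $|m|/|gh^2|^2=|f|/|h|$; and because $D$ is diagonal no cross term like $\overline{c_2}\,c_4$ occurs, so for instance $\Lambda_1=8c_1\overline{c_3}+8|m|^{1/2}c_2\overline{c_3}+8|fh|\,\tfrac{1-i}{4}\,\overline{c_4}$.
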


\begin{proposition}
Let $L = \mathbb{Q}(i, \sqrt[4]{m})$ be an octic Kummer extension satisfying Case 2 conditions $(m \equiv 5 \pmod{16})$. Let $\mathcal{B}_{\mathbb{Z}} = \{u_1, \dots, u_4, \, i u_1, \dots, i u_4\}$ be the integral basis defined in Table 1.

The transition matrix $C_{2}$, which expresses the Case 2 basis $\mathcal{B}_{\mathbb{Z}}$ in terms of the Case 11 diagonal basis, is given by:

\[
C_{2} = \begin{pmatrix}
1 & 1/2 & \text{Re}(c_1) & \text{Re}(c_2) & 0 & 1/2 & -\text{Im}(c_1) & -\text{Im}(c_2) \\
0 & 1/2 & \text{Re}(c_1) & \text{Re}(c_2) & 0 & 1/2 & -\text{Im}(c_1) & -\text{Im}(c_2) \\
0 & 0 & 1/4 & \text{Re}(c_3) & 0 & 0 & 1/4 & -\text{Im}(c_3) \\
0 & 0 & 0 & 1/4 & 0 & 0 & 0 & 0 \\
0 & -1/2 & \text{Im}(c_1) & \text{Im}(c_2) & 1 & 1/2 & \text{Re}(c_1) & \text{Re}(c_2) \\
0 & -1/2 & \text{Im}(c_1) & \text{Im}(c_2) & 0 & 1/2 & \text{Re}(c_1) & \text{Re}(c_2) \\
0 & 0 & -1/4 & \text{Im}(c_3) & 0 & 0 & 1/4 & \text{Re}(c_3) \\
0 & 0 & 0 & 0 & 0 & 0 & 0 & 1/4
\end{pmatrix},
\]
where $c_1,c_2,c_3$ are defined as:
\[
c_1 = \frac{\bar{g}\bar{h}(1-i)}{4}, \quad c_2 = \frac{\bar{g}\bar{h}^2}{4}, \quad c_3 = \frac{|g|^2 h \bar{h}^2}{4}.
\]

The Gram matrix $G_2^{(8)} = (\langle b_r, b_s \rangle)_{1\le r,s \le 8}$ associated with the ordered basis $\mathcal{B}_{\mathbb{Z}}$ is the block matrix
\[
G_2^{(8)} = \begin{pmatrix} M & N \\ -N & M \end{pmatrix},
\]
where $M$ is symmetric and $N$ is antisymmetric. The block entries are given by:
\[
M = \begin{pmatrix}
8 & 4 & 2\text{Re}(gh(1+i)) & 2\text{Re}(gh^2) \\
4 & 4(1+|m|^{1/2}) & \text{Re}(Z_2) & (1+|m|^{1/2})\text{Re}(gh^2(1-i)) \\
2\text{Re}(gh(1+i)) & \text{Re}(Z_2) & \Omega_2 & \text{Re}(\Lambda_2) \\
2\text{Re}(gh^2) & (1+|m|^{1/2})\text{Re}(gh^2(1-i)) & \text{Re}(\Lambda_2) & \Sigma_2
\end{pmatrix}
\]
and
\[
N = \begin{pmatrix}
0 & 4 & 2\text{Im}(gh(1+i)) & 2\text{Im}(gh^2) \\
-4 & 0 & \text{Im}(Z_2) & (1+|m|^{1/2})\text{Im}(gh^2(1-i)) \\
-2\text{Im}(gh(1+i)) & -\text{Im}(Z_2) & 0 & \text{Im}(\Lambda_2) \\
-2\text{Im}(gh^2) & -(1+|m|^{1/2})\text{Im}(gh^2(1-i)) & -\text{Im}(\Lambda_2) & 0
\end{pmatrix},
\]
where $\Omega_{2}$, $\Sigma_{2}$, $\Lambda_2$ and $Z_2$ are given by:
\begin{align*}
\Omega_2 &= |g|^2|h|^2(1 + |m|^{1/2}) + |f||h|, \\
\Sigma_2 &= \frac{1}{2}|g|^2|h|^4(1 + |m|^{1/2} + |m|) + \frac{|m|^{3/2}}{2|g|^2|h|^4}, \\
\Lambda_2 &= \left( \frac{1}{2}|g|^2|h|^2(1 + |m|^{1/2}) + \frac{1}{2}|m| \right) h(1-i), \\
Z_2 &= 2(1 + |m|^{1/2}) gh.
\end{align*}
\end{proposition}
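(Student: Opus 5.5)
The plan is to compute the Case 2 basis and its Gram matrix in terms of the Case 11 data, using the Case 11 computation as the reference point, just as was done for Cases 1, 7, 8, 9 and 10.

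\textbf{Step 1: the transition matrix.} Write the Case 11 $\Z$-basis as $e_1,\dots,e_8$, with $e_{4+r}=ie_r$. Expand each of the four Case 2 generators from Table~\ref{tab:integral-bases} over $\Q(i)$ in the monomials $1,\alpha,\alpha^2/(gh),\alpha^3/(gh^2)$. For instance,
\[
\frac{|gh|^{2}\bigl(-i+(1+i)\alpha\bigr)+\alpha^{2}}{2(1+i)gh}
=\frac{-i\,\bar g\bar h}{2(1+i)}+\frac{\bar g\bar h}{2}\,\alpha+\frac{1}{2(1+i)}\cdot\frac{\alpha^2}{gh},
\]
using $|gh|^2/(gh)=\bar g\bar h$. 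The fourth generator is treated in the same way, with $|gh^2|^2/(gh^2)=\bar g\bar h^2$. Each coefficient $c\in\Q(i)$ acting on $e_r$ contributes the real $2\times 2$ block $\begin{pmatrix}\op{Re}c & -\op{Im}c\\ \op{Im}c & \op{Re}c\end{pmatrix}$ in the rows $(r,4+r)$. The generators $iu_j$ give the remaining four columns. This is how the entries $c_1,c_2,c_3$ and the $\pm1/2,\pm1/4$ entries of $C_2$ arise. I would verify each column directly, and I expect some normalization (for example, the factor $1/(1+i)=(1-i)/2$) to account for the listed form of $c_1$.

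\textbf{Step 2: the Gram matrix.} Compute $G_2^{(8)}=C_2^T G_{11}^{(8)} C_2$, using that $G_{11}^{(8)}$ is diagonal. The cleaner route is to work Hermitianly. Since $\langle iu,v\rangle$ and $\langle u,iv\rangle$ are determined by the $\Z[i]$-structure, the Gram matrix has the block form $\begin{pmatrix}M&N\\-N&M\end{pmatrix}$. Here $M+iN$ is, up to conjugation convention, the Hermitian matrix
\[
H_{rs}=\sum_{k}\tau_{k,+}(u_r)\overline{\tau_{k,+}(u_s)}+\text{conj.}
\]
Write $u_r=\sum_a c_{r,a}\,\alpha^a/\beta_a$ with $\beta_a\in\{1,1,gh,gh^2\}$. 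Orthogonality of distinct powers of $\alpha$ then gives
\[
H_{rs}=8\sum_a c_{r,a}\overline{c_{s,a}}\,\frac{|m|^{a/2}}{|\beta_a|^2}\cdot(\text{normalization}).
\]
This reduces everything to finite sums of products of the coefficients. Symmetry of $M$ and antisymmetry of $N$ follow from $H$ being Hermitian.

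\textbf{Step 3: simplification.} Simplify the entries using $|m|=|f||g|^2|h|^3$ and Lemma~\ref{lem:iden}. The diagonal entry $\Sigma_2$ collects $|c_{4,a}|^2$ times the Case 11 norms, which yields the terms $|g|^2|h|^4(1+|m|^{1/2}+|m|)$ and $|m|^{3/2}/(|g|^2|h|^4)$ with coefficient $1/2$. The off-diagonal $\Lambda_2$ and $Z_2$ come from the cross terms. The main obstacle is bookkeeping in the $(3,4)$ and $(2,3)$ entries: one must track conjugations carefully, for example $\bar g\bar h\cdot gh^2=|gh|^2h$, together with the factors $(1\pm i)$. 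I would check these against the Case 1 computation, which differs only in the constant terms $i$ versus $-i$ and $1$ versus $2-i$, so that sign slips are caught.
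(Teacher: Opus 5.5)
Your method is the right one. It is the direct computation the paper implicitly relies on: expand each Table~1 generator in the $\Q(i)$-basis $e_1,\dots,e_4=1,\alpha,\alpha^2/(gh),\alpha^3/(gh^2)$, then use orthogonality to get $M+iN=8\bigl(\sum_a c_{r,a}\overline{c_{s,a}}\,n_a\bigr)_{r,s}$, where $(n_a)=(1,|m|^{1/2},|f||h|,|m|^{3/2}/|gh^2|^2)$.

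\textbf{Where it fails.} The argument breaks where you say you ``expect some normalization'' to produce the listed $c_1$. There is no freedom there: coordinates with respect to a $\Q(i)$-basis of $L$ are unique, and your own (correct) expansion already contradicts $C_2$.
\begin{itemize}
\item For $u_3$, your expansion gives $e_1$- and $e_2$-coordinates $-\frac{(1+i)\bar g\bar h}{4}=-ic_1$ and $\frac{\bar g\bar h}{2}$. Column~3 of $C_2$ instead has $c_1=\frac{(1-i)\bar g\bar h}{4}$ in both places.
\item For $u_4$, expansion gives $u_4=\frac{(2-i)\bar g\bar h^2}{4}e_1+\frac{\bar g\bar h^2}{4}e_2+ic_3\,e_3+\frac14e_4$. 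Column~4 instead encodes $c_2e_1+c_2e_2+c_3e_3+\frac14e_4$.
\item Case~2 also differs from Case~1 in the $\alpha^2$-coefficient ($i$ versus $1$), not only in the constants, contrary to your Step~3 remark.
\end{itemize}
A one-line check settles it. Since $\langle u_1,u_r\rangle=\Tr_{L/\Q}(u_r)$, one gets $\langle u_1,u_3\rangle=-2\op{Re}((1-i)gh)$ and $\langle u_1,u_4\rangle=2\op{Re}((2+i)gh^2)$. For $g=h=1$ (e.g.\ $m=1+4i$, a Gaussian prime with $m\equiv 1+4i\pmod 8$) these equal $-2$ and $4$. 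The statement asserts $2\op{Re}(gh(1+i))=2$ and $2\op{Re}(gh^2)=2$.

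\textbf{What the displayed matrices actually describe.} The displayed $C_2$ is the transition matrix of $\{1,\frac{1+\alpha}{1+i},u_3',u_4'\}$, where $u_3'=\frac{|gh|^2(1+\alpha)+\alpha^2}{2(1+i)gh}$ and $u_4'=\frac{|gh^2|^2(1+\alpha+\alpha^2)+\alpha^3}{4gh^2}$ is the Case~1 generator. The displayed $G_2^{(8)}$ is $C_2^TG_{11}^{(8)}C_2$ for that set, which is where $Z_2,\Omega_2,\Lambda_2,\Sigma_2$ come from. This is not the Table~1 basis, and $u_3'$ is not compatible with it. The difference $u_3'-u_3=\bar g\bar h\cdot\frac{2-(1+i)\alpha}{4}$ has characteristic polynomial $(X-\frac{\bar g\bar h}{2})^4+\frac{(\bar g\bar h)^4m}{64}$ over $K$. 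Its $X^2$-coefficient $\frac32(\bar g\bar h)^2$ is not in $\Z[i]$, since $gh$ is odd, so $u_3$ and $u_3'$ cannot both lie in $\cO_L$.

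\textbf{Consequence for Step~3.} Step~3 cannot land on the stated entries. For instance, the $e_1$-coordinate of $u_4$ contributes $\frac52|g|^2|h|^4$, not $\frac12|g|^2|h|^4$, to $\Sigma_2$. Carried out honestly, your plan yields a different matrix:
\begin{itemize}
\item $Z_2$ becomes $2igh+2(1-i)|m|^{1/2}gh$;
\item $\Omega_2$ becomes $|g|^2|h|^2(1+2|m|^{1/2})+|f||h|$;
\item $\Sigma_2$ becomes $\frac12|g|^2|h|^4(5+|m|^{1/2}+|m|)+\frac{|m|^{3/2}}{2|g|^2|h|^4}$.
\end{itemize}
So your computation refutes the proposition as printed rather than proving it. Note also that the statement's hypothesis $m\equiv5\pmod{16}$ does not match Table~1's Case~2 condition $m\equiv1+4i\pmod 8$. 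Once $C_2$ and $G_2^{(8)}$ are recomputed from the Table~1 generators, your Steps~1--3 become a valid proof of the corrected statement.
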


\subsection{Asymptotic Stability of Shapes via Renormalization}

We analyze the distribution of shapes as the absolute discriminant approaches infinity ($|\Delta_L| \to \infty$), which corresponds to $|m|\to \infty$. We recall the fundamental shape parameters $\lambda_1 := |f|/|h|$ and $\lambda_2 := |g|$. The central result of our analysis is that, asymptotically, the geometry of the projected lattice $J(\mathcal{O}_L)^{\perp}$ depends solely on the pair $(\lambda_1, \lambda_2)$. As $|m|$ grows, the dominant terms of the Gram matrix entries normalize to functions of these parameters, while all other terms represent lower-order perturbations that vanish in the limit.

\subsubsection{Type I (Cases 3--6, 11--12)}
In these cases, the lattice scales uniformly. The entries of the Gram matrix $G_i^{(8)}$ are dominated by terms linear in $|m|^{1/2}$. To isolate the shape, we apply a uniform scalar normalization $N = |m|^{-1/2}$.

For example, in Case 12, the Gram matrix $G_{12}^{(8)}$ contains entries such as $8A$, $8B$, and $4A(1+\lambda)$ (where $\lambda = \lambda_1$). Upon normalization by $|m|^{1/2}$:
\begin{itemize}
    \item Terms linear in $A = |m|^{1/2}$ become constants or functions of $\lambda_1$ (e.g., $4A(1+\lambda)$ normalizes to $4(1+\lambda_1)$).
    \item The term $B = |f||h|$ appears on the diagonal. By invoking \text{Lemma \ref{lem:iden}}, we establish that $B = \frac{\sqrt{\lambda_1}}{\lambda_2}|m|^{1/2}$. Thus, this entry normalizes to $\frac{\sqrt{\lambda_1}}{\lambda_2}$.
    \item Constant terms (e.g., 8) vanish asymptotically.
\end{itemize}
Consequently, for large $|m|$, the shape is governed purely by the parameters $\lambda_1$ and $\lambda_2$. A similar analysis applies to all Type I cases.
\subsubsection{Type II: (Cases 1, 2, 7--10)}
These cases exhibit multi-scale behaviour, where basis vectors grow at differing rates. We employ the scaling matrix $S_m:= \text{diag}(|m|^{-k_i})$ to normalize each basis vector according to its specific growth exponent.

\par Consider Case 1 as an illustrative example. Using the identities established in \text{Lemma \ref{lem:iden}}, the diagonal entries exhibit the following asymptotics:
\begin{itemize}
    \item $G_{11} = 8$, \qquad $G_{22} = 4(1+|m|^{1/2}) \sim 4|m|^{1/2}$,
    \qquad $G_{33} = \Omega_1 \sim  \frac{2\lambda_2}{\sqrt{\lambda_1}}|m|$, \qquad $G_{44} = \Sigma_1 \sim  \frac{1}{2\lambda_1}|m|^2$.
\end{itemize}
Accordingly, we define the scaling matrix $S_m = \text{diag}\left(1, \, |m|^{-1/4}, \, |m|^{-1/2}, \, |m|^{-1}, \dots \right)$. The renormalized Gram matrix $S_m G_{1}^{(8)} S_m$ stabilizes such that its entries depend asymptotically only on $\lambda_1$ and $\lambda_2$. For instance, $G_{44}$ normalizes to  $\frac{1}{2\lambda_1}$. Proceeding similarly for the remaining entries, one can verify that the limiting shape is completely determined by the pair $(\lambda_1, \lambda_2)$. A similar analysis applies to all Type II cases.

\section{Density results}

\par
In this section, we study the distribution of the parameters
$\lambda_1$ and $\lambda_2$ from Definition~\ref{shape parameter defn}.
Assume that $(f,g,h)$ satisfy fixed congruence conditions modulo a high
power of $\mathfrak{l}:=(1+i)\Z[i]\subset \Z[i]$ so that we are in one of the cases from
Table~1.  For $\alpha\in\Z[i]$, set
\[
N(\alpha):=\op{Norm}_{K/\Q}(\alpha)=|\alpha|^2.
\]
\noindent There is a constant $C>0$ depending only on the congruence class,
the absolute discriminant equals
\[
|\Delta_L|=C N(f)^3N(g)^6N(h)^9
=C|fg^2h^3|^6.
\]
Accordingly, we normalize the height by
\[
H(f,g,h):=|f|\,|g|^2\,|h|^3.
\]

\par
Fix a rectangle
\[
\mathcal R:=[R_1',R_1]\times[R_2',R_2]\subset(0,\infty)^2
\]
and let $X>0$.
Let $\mathcal N'(\mathcal R;X)$ denote the set of triples $(f,g,h)$ of elements of $\Z[i]$ such that
\[
H(f,g,h)<X
\quad \text{and}\quad
(\lambda_1,\lambda_2)\in\mathcal R.
\]
We say that $(f,g,h)$ is \emph{strongly carefree} if $f$, $g$ and $h$ are
pairwise coprime and squarefree. Let $\mathcal N(\mathcal R;X)\subseteq \mathcal N'(\mathcal R;X)$ be the subset of strongly carefree $(f,g,h)$ and
write
\[
N(\mathcal R;X):=\#\mathcal N(\mathcal R;X)
\quad\text{and}\quad
N'(\mathcal R;X):=\#\mathcal N'(\mathcal R;X).
\]
\noindent Write $u:=|f|$, $v:=|g|$ and $w:=|h|$. The conditions $(\lambda_1,\lambda_2)\in\mathcal R$ become $\frac{u}{w}\in[R_1',R_1]$, $v\in[R_2',R_2]$ or equivalently $u\in[R_1'w,\;R_1 w]$. The height bound is $u\,v^2\,w^3<X$. Thus, after fixing $v$ and $w$, the admissible range for $u$ is
\begin{equation}\label{eq:u-range}
R_1'w
\;\le\;
u
\;\le\;
\min\!\left(
R_1w,\;
\frac{X}{v^2w^3}
\right).
\end{equation}

\subsection{Counting without arithmetic conditions}

\par
Before estimating $N(\mathcal R;X)$, we first estimate
$N'(\mathcal R;X)$, where no squarefreeness or coprimality conditions
are imposed.
It is easy to see that the number of Gaussian integers with $|z|\le T$
satisfies
\[
\#\{z\in\Z[i]:|z|\le T\}
=
\pi T^2+O(T).
\]
For fixed $v$ and $w$, the admissible range for $u$ is 
\begin{equation}\label{eq:u-range}
R_1'w
\;\le\;
u
\;\le\;
\min\!\left(
R_1 w,\;
\frac{X}{v^2w^3}
\right).
\end{equation}

\noindent For $t\geq 0$, let $N_t$ denote the number of $z\in \Z[i]$ such that $|z|=t$. Note that if $t^2\notin \Z$, then $N_t=0$. Using the Gaussian integer counting estimate in the $f$-variable, we obtain
\begin{equation}\label{N'(R;X) equation 1}
\begin{aligned}
N'(\mathcal R;X)
&=
\sum_{v}\sum_{w}
N_v N_w\left(
\pi\left(
\min\!\left(R_1w,\frac{X}{v^2w^3}\right)^2
-
(R_1'w)^2
\right)
+
O\!\left(
\min\!\left(R_1w,\frac{X}{v^2w^3}\right)
\right)
\right),
\end{aligned}
\end{equation}
where the sums range over $v,w>0$ with
\[
v\in[R_2',R_2],
\quad \text{and}\quad
w\le\left(\frac{X}{v^2R_1'}\right)^{1/4}.
\]
We note that since $v\ll 1$ and $w\ll X^{1/4}$, it follows that $N_v\ll 1$ and $N_w\ll X^\varepsilon$ for any $\varepsilon>0$.

\begin{lemma}
    We have the following bound for the error term:
\[
\sum_{v}\sum_{w} N_v N_w
\min\!\left(R_1w,\frac{X}{v^2w^3}\right)
\;\ll_{\mathcal R}\;
X^{1/2+\varepsilon}
\]
for any $\varepsilon >0$.
\end{lemma}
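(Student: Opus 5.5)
The plan is to drop the minimum in favour of whichever branch is smaller on each side of the crossover point, pull out the divisor-type factor $N_w$, and count the remaining $w$-sum. Since $v\in[R_2',R_2]$, only $O_{\mathcal R}(1)$ values of $v$ occur, each with $N_v\ll 1$, so it suffices to bound, uniformly in such $v$,
\[
S_v(X):=\sum_{w} N_w\min\!\left(R_1w,\frac{X}{v^2w^3}\right).
\]
The two branches of the minimum cross at $w_0=w_0(v):=\bigl(X/(R_1v^2)\bigr)^{1/4}\asymp_{\mathcal R}X^{1/4}$. I split the sum at $w_0$. For $w\le w_0$ I use the bound $R_1w$. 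For $w_0<w\le (X/(v^2R_1'))^{1/4}$ I use $X/(v^2w^3)$. This second range is a dyadic-size interval $[w_0,cw_0]$, on which $X/(v^2w^3)\asymp_{\mathcal R} w_0$, so both pieces are controlled by $w_0$ times a weighted count of the admissible $w$.

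Next I insert the pointwise bound $N_w\ll X^{\varepsilon}$ recorded just before the lemma, valid because $w\ll X^{1/4}$. This gives
\[
S_v(X)\ll_{\mathcal R} X^{\varepsilon}\, w_0\cdot\#\{w\in(0,cw_0] : N_w\neq 0\}.
\]
The target $X^{1/2+\varepsilon}$ then follows provided the number of admissible $w$-values up to $cw_0$ is $\ll w_0\asymp X^{1/4}$. This holds, for instance, if the $w$-sum is taken over integers, or over values $w=|h|$ counted without the multiplicity $N_w$ and with a gap bounded below. Summing over the $O(1)$ values of $v$ then gives the stated bound.

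The main obstacle is this counting step. If $w$ ranges over all values $t$ with $t^2\in\Z$ and $N_t\neq0$, then there are about $w_0^2\asymp X^{1/2}$ such values up to $w_0$, and the argument above only yields $X^{3/4+\varepsilon}$. Equivalently, rewriting $\sum_w N_w(\cdot)$ as a sum over $h\in\Z[i]$ and using $\#\{|h|\le T\}=\pi T^2+O(T)$ gives $\sum_{|h|\le w_0}|h|\asymp w_0^3\asymp X^{3/4}$. So I would first check which indexing of the $w$-sum is intended in \eqref{N'(R;X) equation 1}. If the exponent $1/2$ cannot be recovered, I would fall back on the $X^{3/4+\varepsilon}$ bound, which is still $o(X)$ and therefore suffices for the main term $\lim N'(\mathcal R;X)/X$ and for Theorem~\ref{thm a}.
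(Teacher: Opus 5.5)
Your outline is the paper's proof. The paper splits at $w_0=(X/(R_1v^2))^{1/4}$, uses $R_1w$ below and $X/(v^2w^3)$ above, and then inserts $N_v\ll 1$ and $N_w\ll X^{\varepsilon}$. It bounds each piece by $X^{1/2}/v$ via $\sum_{w\le w_0}R_1w\ll X^{1/2}/v$ and $\sum_{w>w_0}X/(v^2w^3)\ll \frac{X}{v^2}\int_{w_0}^{\infty}w^{-3}\,dw$. Both estimates silently treat $w$ as running over integers, which is exactly the counting step you flagged.

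There is no ambiguity about the indexing. The formula for $N'(\mathcal R;X)$ preceding the lemma comes from grouping $(g,h)\in\Z[i]^2$ by $v=|g|$ and $w=|h|$, with $N_t$ defined for real $t$ and vanishing unless $t^2\in\Z$. So $w$ must run over all values $|h|$; integer indexing would drop every $h$ with $|h|\notin\Z$ and miscount $N'(\mathcal R;X)$. By Landau--Ramanujan there are $\asymp w_0^2/\sqrt{\log w_0}$ such values up to $w_0$, not $O(w_0)$, so both of the paper's estimates fail.

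Your closing computation is in fact a disproof of the lemma, not just evidence that the method is lossy. All terms are nonnegative, and the minimum equals $R_1w$ whenever $w\le w_0\le (X/(v^2R_1'))^{1/4}$. Hence for any $v$ with $N_v\ge 1$,
\[
\sum_{w}N_w\min\Bigl(R_1w,\frac{X}{v^2w^3}\Bigr)\;\ge\;R_1\sum_{0<|h|\le w_0}|h|\;=\;\frac{2\pi}{3}R_1w_0^3+O(w_0^2)\;\gg_{\mathcal R}\;X^{3/4}.
\]
So the claimed bound is false for every $\varepsilon<1/4$, unless no $|g|$ lies in $[R_2',R_2]$ and the sum is empty.

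The correct statement is your fallback, and it can be sharpened. Sum over $h$ directly and write $W=(R_1/R_1')^{1/4}w_0$:
\begin{itemize}
\item the range $|h|\le w_0$ contributes $\ll w_0^3$;
\item the range $w_0<|h|\le W$ contributes at most $R_1w_0\cdot\#\{h:|h|\le W\}\ll_{\mathcal R}w_0^3$, since $X/(v^2|h|^3)<R_1w_0$ there.
\end{itemize}
So the sum is $\asymp_{\mathcal R}X^{3/4}$, with no $X^{\varepsilon}$ loss and no need for the divisor bound on $N_w$.

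Consequently \eqref{eq:Nprime-sum-reduction} should carry $O_{\mathcal R}(X^{3/4})$ in place of $O_{\mathcal R}(X^{1/2+\varepsilon})$. This matches the error term already stated in Proposition \ref{Propn N' count}, and since it is $o(X)$, Theorem \ref{thm a} is unaffected.
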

\begin{proof}
Fix $v\in[R_2',R_2]$.  The transition point at which
$R_1w=X/(v^2w^3)$ occurs at
\[
w_0=\left(\frac{X}{R_1v^2}\right)^{1/4}.
\]
We therefore split the $w$--sum at $w_0$:
\[
\sum_{w\le w_0} R_1w
\;+\;
\sum_{w>w_0}\frac{X}{v^2w^3}.
\]
\noindent The first of the above sums satisfies
\[
\sum_{w\le w_0} R_1w
\;\ll_{\mathcal R}
\frac{X^{1/2}}{v},
\]
while the second sum satisfies
\[
\sum_{w>w_0}\frac{X}{v^2w^3}
\;\ll\;
\frac{X}{v^2}\int_{w_0}^{\infty}w^{-3}\,dw
\;\ll \;
\frac{X}{v^2w_0^2}
\;\ll\;
\frac{X^{1/2}}{v}.
\]
\noindent Combining these bounds gives
\[
\sum_{w}
\min\!\left(R_1w,\frac{X}{v^2w^3}\right)
\;\ll_{\mathcal R}\;
\frac{X^{1/2}}{v}.
\]
Finally, summing over $v\in[R_2',R_2]$ and noting that $N_v\ll 1$ and $N_w\ll X^{\varepsilon}$, we have that
\[
\sum_{v}N_v \sum_{w} N_w
\min\!\left(R_1w,\frac{X}{v^2w^3}\right)
\;\ll_{\mathcal R}\;
X^{1/2+\varepsilon}.
\]
\end{proof}

\noindent Thus from \eqref{N'(R;X) equation 1}, we have that
\begin{equation}\label{eq:Nprime-sum-reduction}
\begin{aligned}
N'(\mathcal R;X)
&=
\sum_{v}N_v\sum_{w}
\pi N_w\left(
\min\!\left(R_1w,\frac{X}{v^2w^3}\right)^2
-
(R_1'w)^2
\right)
+
O_{\mathcal R}(X^{1/2+\varepsilon}).
\end{aligned}
\end{equation}

\begin{proposition}\label{Propn N' count}
As $X\to\infty$, one has
\[
N'(\mathcal R;X)
=
\pi^2\bigl(R_1-R_1'\bigr)
X
\sum_{v\in[R_2',R_2]}
\frac{N_v}{v^2}
+
O_{\mathcal R}(X^{3/4}).
\]
\end{proposition}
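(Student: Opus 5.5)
We start from \eqref{eq:Nprime-sum-reduction}, which already reduces $N'(\mathcal R;X)$ to a main term plus $O_{\mathcal R}(X^{1/2+\varepsilon})$. The sum over $v$ is finite: $v=|g|$ ranges over the finitely many values in $[R_2',R_2]$ with $v^2\in\Z$, and $N_v\ll 1$. So it suffices to fix $v$ and evaluate
\[
S_v(X):=\sum_{w}\pi N_w\left(\min\!\left(R_1w,\frac{X}{v^2w^3}\right)^2-(R_1'w)^2\right),
\]
where $w$ ranges over values $|h|$ with $w\le (X/(v^2R_1'))^{1/4}$. We then need to show that $S_v(X)=\pi^2(R_1-R_1')X/v^2+O_{\mathcal R}(X^{3/4})$.

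First split the $w$-sum at the transition point $w_0=(X/(R_1v^2))^{1/4}$, and set $w_1=(X/(R_1'v^2))^{1/4}$, so the summand vanishes beyond $w_1$.

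For $w\le w_0$ the summand is $\pi(R_1^2-R_1'^2)w^2$. Summing $N_w w^2=\sum_{|h|\le w_0}|h|^2$ over Gaussian integers gives $\pi w_0^4/2+O(w_0^3)$. To see this, use partial summation against the lattice-point count $\pi T^2+O(T)$: the main term is $\int_0^{w_0}t^2\,d(\pi t^2)=\pi w_0^4/2$, and the error is $O(w_0^3)$.

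For $w_0<w\le w_1$ the summand is $\pi\bigl(X^2/(v^4w^6)-R_1'^2w^2\bigr)$. Again by partial summation, we get $\sum_{w_0<|h|\le w_1}|h|^{-6}=2\pi\int_{w_0}^{w_1}t^{-5}\,dt+O(w_0^{-5})$ and $\sum |h|^2=\tfrac{\pi}{2}(w_1^4-w_0^4)+O(w_1^3)$.

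Now substitute $w_0^4=X/(R_1v^2)$ and $w_1^4=X/(R_1'v^2)$. The boundary errors are $O(X^2 w_0^{-5}/v^4)+O(w_1^3)=O_{\mathcal R}(X^{3/4})$, which is the source of the claimed error term. Collecting the main terms gives $\pi^2 X/v^2$ times
\[
\frac{R_1^2-R_1'^2}{2R_1}+\frac{1}{2}\Bigl(R_1-R_1'\Bigr)-\frac{R_1'^2}{2}\Bigl(\frac{1}{R_1'}-\frac{1}{R_1}\Bigr).
\]
Here the middle term is $\tfrac{X^2}{v^4}\cdot\tfrac{\pi}{2}(w_0^{-4}-w_1^{-4})$ scaled by $\pi$. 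This should simplify algebraically to $R_1-R_1'$. Summing over the finitely many $v$ with weight $N_v$ then gives the stated formula.

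The main obstacle is bookkeeping in this last step. We must check that the three contributions combine exactly to $\pi^2(R_1-R_1')X/v^2$, keeping track of the factor $\pi$ from the $f$-count versus the $\pi$ from the $h$-count. If a discrepancy appears, the first thing to re-examine is the normalization of the partial-summation integrals. The error terms are routine because $w_0\asymp w_1\asymp X^{1/4}$ uniformly for $v\in[R_2',R_2]$, so every boundary error is $O(X^{3/4})$, and this dominates the earlier $O(X^{1/2+\varepsilon})$.
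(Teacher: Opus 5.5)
Your proposal is correct and is essentially the paper's argument. The paper applies Abel summation against $A(T)=\pi T^2+O(T)$ to the whole piecewise function $F$ and integrates by parts to $2\pi\int_0^{T_v} rF(r)\,dr$ before splitting at $w_0$, whereas you split first and sum $|h|^2$ and $|h|^{-6}$ separately; this gives the same integrals, and your three terms do combine to $\frac{R_1-R_1'}{2R_1}\bigl[(R_1+R_1')+(R_1-R_1')\bigr]=R_1-R_1'$.

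One caveat is inherited from the paper rather than introduced by you. With the $O(T)$ circle-problem error, the error in the reduction you start from is only bounded by $\sum_{|h|\le w_1}|h|\asymp X^{3/4}$, not $X^{1/2+\varepsilon}$, because the paper's lemma effectively treats the distinct values of $w$ as if they were spaced like integers. This larger bound is still absorbed into the stated $O_{\mathcal R}(X^{3/4})$, so your conclusion stands.
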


\begin{proof}
We begin from \eqref{eq:Nprime-sum-reduction}:
\[
N'(\mathcal R;X)
=
\pi
\sum_{v} N_v
\sum_{w}
N_w
\Big(
\min(R_1w,X/(v^2w^3))^2
-
(R_1'w)^2
\Big)
+
O_{\mathcal R}(X^{1/2+\varepsilon}),
\]
where $v\in[R_2',R_2]$ and the inner sum runs over $w$ for which the
summand is nonzero.

Fix $v$ in this interval.  Define
\[
F(t)
:=
\min(R_1t,X/(v^2t^3))^2
-
(R_1't)^2,
\]
and let
\[
T_v := \left(\frac{X}{R_1'v^2}\right)^{1/4}.
\]
Then the inner sum may be written as
\[
\sum_{w} N_w F(w)
=
\sum_{\substack{h\in\Z[i]\\ |h|\le T_v}} F(|h|).
\]
Setting $A(T):=\#\{h\in\Z[i]: |h|\le T\}$, one has that
\[
A(T)=\pi T^2+O(T).
\]
\noindent Write $0=r_0<r_1<\cdots<r_N\le T_v$ for the distinct values of $|h|$ in
increasing order.  Then
\[
\sum_{\substack{h\in\Z[i]\\ |h|\le T_v}} F(|h|)
=
\sum_{j=1}^N F(r_j)\bigl(A(r_j)-A(r_{j-1})\bigr).
\]
\noindent Applying Abel summation gives
\[
=
F(r_N)A(r_N)
-
\sum_{j=1}^{N-1} A(r_j)\bigl(F(r_{j+1})-F(r_j)\bigr).
\]
Since $F(t)\ll 1$ and $A(r_N)\ll T_v^2\asymp X^{1/2}$, we obtain
\[
F(r_N)A(r_N)=O(X^{1/2}).
\]
By the mean value theorem,
\[
F(r_{j+1})-F(r_j)
=
F'(\xi_j)(r_{j+1}-r_j)
\]
for some $\xi_j\in(r_j,r_{j+1})$.  Therefore
\[
\sum_{\substack{h\in\Z[i]\\ |h|\le T_v}} F(|h|)
=
-\sum_{j} A(r_j)F'(\xi_j)(r_{j+1}-r_j)
+
O(X^{1/2}).
\]
\noindent The sum on the right is a Riemann sum for the integral
\[
-\int_0^{T_v} A(r)F'(r)\,dr.
\]
Since $A(r)=\pi r^2+O(r)$, the difference between the discrete sum and the
integral is bounded by
\[
\ll \int_0^{T_v} r\,|F'(r)|\,dr.
\]
Hence
\[
\sum_{\substack{h\in\Z[i]\\ |h|\le T_v}} F(|h|)
=
-\int_0^{T_v} A(r)F'(r)\,dr
+
O\!\left(
\int_0^{T_v} r\,|F'(r)|\,dr
+
X^{1/2}
\right).
\]
\noindent Substituting $A(r)=\pi r^2+O(r)$ gives
\[
=
-\pi\int_0^{T_v} r^2F'(r)\,dr
+
O\!\left(
\int_0^{T_v} r\,|F'(r)|\,dr
+
X^{1/2}
\right).
\]
Note that $F(T_v)=F(0)=0$. Integration by parts yields
\[
-\pi\int_0^{T_v} r^2F'(r)\,dr
=
2\pi\int_0^{T_v} rF(r)\,dr.
\]

We may now evaluate these integrals. Let 
\[
r_0 := \left( \frac{X}{R_1 v^2} \right)^{1/4}
\quad\text{and}\quad
r_1 := \left( \frac{X}{R_1' v^2} \right)^{1/4}.
\]
By definition, the function $F$ is given by:
\[
F(r) = 
\begin{cases} 
(R_1^2 - R_1'^2) r^2, & 0 \le r \le r_0,\\[2mm]
\dfrac{X^2}{v^4 r^6} - R_1'^2 r^2, & r_0 < r \le r_1,\\[1mm]
0, & r > r_1.
\end{cases}
\]
First, consider the integral
\[
\int_0^{T_v} r F(r)\, dr 
= \int_0^{r_0} r (R_1^2 - R_1'^2) r^2\, dr 
+ \int_{r_0}^{r_1} r \left( \frac{X^2}{v^4 r^6} - R_1'^2 r^2 \right) dr.
\]
We have
\[
\int_0^{r_0} (R_1^2 - R_1'^2) r^3\, dr = \frac{R_1^2 - R_1'^2}{4} r_0^4 = \frac{R_1^2 - R_1'^2}{4} \cdot \frac{X}{R_1 v^2} = \frac{X (R_1 - R_1')(R_1 + R_1')}{4 R_1 v^2},
\]
and
\[
\int_{r_0}^{r_1} r \left( \frac{X^2}{v^4 r^6} - R_1'^2 r^2 \right) dr
= \frac{X^2}{v^4} \int_{r_0}^{r_1} r^{-5} dr - R_1'^2 \int_{r_0}^{r_1} r^3 dr.
\]

We compute each term separately:

\[
\frac{X^2}{v^4} \int_{r_0}^{r_1} r^{-5} dr = \frac{X^2}{v^4} \cdot \frac{r_0^{-4} - r_1^{-4}}{4} 
= \frac{X^2}{4 v^4} \left( \frac{R_1 v^2}{X} - \frac{R_1' v^2}{X} \right) 
= \frac{X (R_1 - R_1')}{4 v^2},
\]

and

\[
R_1'^2 \int_{r_0}^{r_1} r^3 dr = \frac{R_1'^2}{4} (r_1^4 - r_0^4) 
= \frac{R_1'^2}{4} \cdot \frac{X}{v^2} \left( \frac{1}{R_1'} - \frac{1}{R_1} \right) 
= \frac{X (R_1 - R_1') R_1'}{4 v^2 R_1}.
\]

Hence the second integral simplifies to
\[
\int_{r_0}^{r_1} r F(r)\, dr = \frac{X (R_1 - R_1')}{4 v^2} - \frac{X (R_1 - R_1') R_1'}{4 v^2 R_1} 
= \frac{X (R_1 - R_1')^2}{4 R_1 v^2}.
\]
\noindent Combining the two contributions, we obtain
\[
\int_0^{T_v} r F(r)\, dr = \frac{X (R_1 - R_1')(R_1 + R_1')}{4 R_1 v^2} + \frac{X (R_1 - R_1')^2}{4 R_1 v^2} = \frac{X (R_1 - R_1')}{2 v^2}.
\]
\noindent Multiplying by \(2\pi\) to account for angular directions gives
\[
2\pi \int_0^{T_v} r F(r)\, dr = \frac{\pi X (R_1 - R_1')}{v^2}.
\]

For the derivative, we have
\[
F'(r) = 
\begin{cases} 
2 (R_1^2 - R_1'^2) r, & 0 \le r \le r_0,\\
-6 \frac{X^2}{v^4 r^7} - 2 R_1'^2 r, & r_0 < r \le r_1.
\end{cases}
\]
Hence
\[
\int_0^{T_v} r |F'(r)| dr \ll \int_0^{r_0} r^2 dr + \int_{r_0}^{r_1} \left( \frac{X^2}{v^4 r^6} + r^2 \right) dr \ll \frac{X^{3/4}}{v^{3/2}}.
\]
\noindent Since \(v\) lies in a fixed interval \([R_2', R_2]\), this gives an error term
\[
O_{\mathcal R}(X^{3/4}).
\]
We conclude that
\[
\sum_{|h|\leq T_v} F(|h|) = \frac{\pi X (R_1 - R_1')}{v^2} + O_{\mathcal R}(X^{3/4}).
\]
Substitute this into the outer sum over $v$ with $\varepsilon\in (0, 1/4)$ to get:
\[
N'(\mathcal R;X)
=
\pi^2\bigl(R_1-R_1'\bigr)
X
\sum_{v\in[R_2',R_2]}
\frac{N_v}{v^2}
+
O_{\mathcal R}(X^{3/4}),
\]
which completes the proof.
\end{proof}

\subsection{Congruence conditions}
Given a nonzero prime ideal $\mathfrak p$ of $\Z[i]$, we say that a triple
$(\bar f,\bar g,\bar h)\in(\Z[i]/\mathfrak p^2)^3$ is
\emph{$\mathfrak p$-admissible} if
\[
\mathfrak p^2\nmid \bar f\bar g\bar h,
\qquad
\mathfrak p\nmid (\bar f,\bar g),\ (\bar f,\bar h),\ (\bar g,\bar h).
\]
Denote by $\mathcal A_{\mathfrak p}$ the set of all such admissible
residue classes. For a squarefree ideal $\mathfrak n\subset\Z[i]$, define
\[
\Omega_{\mathfrak n}
:=
\prod_{\mathfrak p\mid\mathfrak n}\mathcal A_{\mathfrak p}
\subset
(\Z[i]/\mathfrak n^2)^3,
\]
where the product is taken via the Chinese remainder theorem. 
\begin{lemma}\label{lem:local-density}
Let $\mathfrak p$ be a prime ideal of $\Z[i]$ with norm
$q=\op{Norm}(\mathfrak p)$. Let $\mathcal A_{\mathfrak p}\subset
(\Z[i]/\mathfrak p^2)^3$ denote the set of residue classes
$(f,g,h)$ such that
\begin{itemize}
\item no coordinate is divisible by $\mathfrak p^2$, and
\item at most one of $f,g,h$ is divisible by $\mathfrak p$.
\end{itemize}
Then
\[
\#\mathcal A_{\mathfrak p}
=
q^6\left(1-\frac{1}{q}\right)^2
\left(1+\frac{2}{q}-\frac{3}{q^2}\right).
\]
\end{lemma}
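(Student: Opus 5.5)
The plan is to count directly, classifying each coordinate by its $\mathfrak p$-adic valuation modulo $\mathfrak p^2$ and then summing over the admissible valuation patterns.

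First I will sort the residue classes in $\Z[i]/\mathfrak p^2$, a ring with $q^2$ elements, into three types:
\begin{itemize}
\item units (not divisible by $\mathfrak p$): there are $q^2-q$ of these;
\item classes divisible by $\mathfrak p$ but not by $\mathfrak p^2$: these correspond to $\mathfrak p/\mathfrak p^2\setminus\{0\}$, so there are $q-1$ of them;
\item the zero class: there is exactly one.
\end{itemize}
The first condition in the statement rules out the zero class in every coordinate. The second condition says that at most one coordinate is of the second type.

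Next I will split $\mathcal A_{\mathfrak p}$ according to how many coordinates are divisible by $\mathfrak p$.
\begin{itemize}
\item If no coordinate is divisible by $\mathfrak p$, there are $(q^2-q)^3$ triples.
\item If exactly one coordinate is divisible by $\mathfrak p$, there are $3$ choices for that coordinate. It then has $q-1$ possible values, and the other two coordinates are units, giving $3(q-1)(q^2-q)^2$ triples.
\end{itemize}
Since these two cases are disjoint and exhaust $\mathcal A_{\mathfrak p}$, adding them gives
\[
\#\mathcal A_{\mathfrak p}=(q^2-q)^2\bigl((q^2-q)+3(q-1)\bigr)=q^2(q-1)^3(q+3).
\]

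The final step is to check that this agrees with the claimed formula. Indeed,
\[
q^6\Bigl(1-\frac1q\Bigr)^2\Bigl(1+\frac2q-\frac3{q^2}\Bigr)=q^4(q-1)^2\cdot\frac{q^2+2q-3}{q^2}=q^2(q-1)^2(q-1)(q+3),
\]
using the factorization $q^2+2q-3=(q-1)(q+3)$.

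There is no real obstacle here. The only point needing care is the count $\#(\mathfrak p\setminus\mathfrak p^2)/\mathfrak p^2=q-1$, which follows from $\mathfrak p/\mathfrak p^2\cong\Z[i]/\mathfrak p$. One should also note that the lemma's description of $\mathcal A_{\mathfrak p}$ matches the earlier definition of $\mathfrak p$-admissibility. Under the "at most one divisible by $\mathfrak p$" condition, requiring each coordinate to be nonzero mod $\mathfrak p^2$ is equivalent to requiring $\mathfrak p^2\nmid\bar f\bar g\bar h$.
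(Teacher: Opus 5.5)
Your proposal is correct and matches the paper's proof: you split into the cases where no coordinate, or exactly one coordinate, is divisible by $\mathfrak p$, obtain $(q^2-q)^3+3(q-1)(q^2-q)^2=(q^2-q)^2(q^2+2q-3)$, and then factor out $q^6$. Your extra remarks are also correct and simply make explicit what the paper leaves implicit, namely the count $q-1$ via $\mathfrak p/\mathfrak p^2\cong\Z[i]/\mathfrak p$ and the equivalence with the earlier definition of $\mathfrak p$-admissibility.
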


\begin{proof}
Since $\#(\Z[i]/\mathfrak p^2)=q^2$, the ambient space
$(\Z[i]/\mathfrak p^2)^3$ has cardinality $q^6$. We count the elements of
$\mathcal A_{\mathfrak p}$ by distinguishing cases according to the
$\mathfrak p$--adic valuations of the coordinates.
\par First suppose that $\mathfrak p$ divides none of $f,g,h$. The number of
elements of $\Z[i]/\mathfrak p^2$ not divisible by $\mathfrak p$ is
$q^2-q$. Hence the contribution from this case is $(q^2-q)^3$.
\par Next suppose that $\mathfrak p$ divides exactly one of the three
coordinates. By symmetry, it suffices to consider the case
$\mathfrak p\mid f$ and $\mathfrak p\nmid g,h$. Since divisibility by
$\mathfrak p^2$ is excluded, the element $f$ must be divisible by
$\mathfrak p$ but not by $\mathfrak p^2$. The number of such residue
classes modulo $\mathfrak p^2$ is $q-1$. Meanwhile, each of $g$ and $h$
must be coprime to $\mathfrak p$, giving $q^2-q$ choices for each.
Therefore, for a fixed choice of the coordinate divisible by
$\mathfrak p$, the number of admissible triples is $(q-1)(q^2-q)^2$. Since there are three choices for which coordinate is divisible by
$\mathfrak p$, the total contribution from this case is $3(q-1)(q^2-q)^2$.
\par Adding the contributions of the two admissible cases yields
\[
\#\mathcal A_{\mathfrak p}
=
(q^2-q)^3 + 3(q-1)(q^2-q)^2
=
(q^2-q)^2(q^2+2q-3).
\]
Factoring out $q^6$ gives the stated formula.
\end{proof}

Let $B>0$ and fix a squarefree ideal $\mathfrak n$ divisible by all prime ideals
$\mathfrak p$ with $\op{Norm}(\mathfrak p)\le B$. Let
\[
\mathcal C_{\mathfrak n}
:=
\left\{
(f,g,h)\in\Z[i]^3:
(f,g,h)\bmod\mathfrak n^2\in\Omega_{\mathfrak n}
\right\}.
\]
\noindent For a region $\mathcal R\subset\R_{>0}^2$ and
$X>0$, define
\[
N_{\mathfrak n}(\mathcal R;X)
:=
\#\left\{
(f,g,h)\in\mathcal C_{\mathfrak n}:
H(f,g,h)<X,\ (\lambda_1,\lambda_2)\in\mathcal R
\right\}.
\]

\begin{proposition}\label{prop:Nn-asymptotic}
Let $\mathcal R = [R_1',R_1]\times [R_2',R_2]$ be fixed intervals and $\mathfrak n \subset \Z[i]$ a nonzero ideal. Then, as $X \to \infty$, one has
\[
N_{\mathfrak n}(\mathcal R;X)
=
\pi^2
\bigl(R_1-R_1'\bigr)
X
\prod_{\mathfrak p\mid\mathfrak n}
\left(1-\dfrac{3}{\op{Norm}(\mathfrak p)^2}+\dfrac{2}{\op{Norm}(\mathfrak p)^3}\right)
\sum_{\substack{g\in\Z[i]\\
g\text{ squarefree}\\
|g|\in[R_2',R_2]}}
\frac{1}{|g|^2}
\prod_{\mathfrak p\mid g}
\left(
\frac{\op{Norm}(\mathfrak p)}
{\op{Norm}(\mathfrak p)+2}
\right)
+
O_{\mathcal R,\mathfrak n}(X^{3/4}).
\]
\end{proposition}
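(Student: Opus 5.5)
The plan is to exploit the fact that $g$ ranges over a \emph{finite} set. The condition $(\lambda_1,\lambda_2)\in\mathcal R$ forces $|g|\in[R_2',R_2]$, so only finitely many $g\in\Z[i]$ occur. I would therefore fix $g$, count the pairs $(f,h)$ lying in prescribed residue classes modulo $\mathfrak n^2$, and then sum over $g$. The congruence condition defining $\mathcal C_{\mathfrak n}$ factors over the primes $\mathfrak p\mid\mathfrak n$ by the Chinese remainder theorem. Hence, for fixed $g$, the admissible pairs $(f,h)\bmod \mathfrak n^2$ form a union of
\[
\prod_{\mathfrak p\mid \mathfrak n} a_{\mathfrak p}(g)
\]
residue classes, where $a_{\mathfrak p}(g)$ is the number of $(\bar f,\bar h)\in(\Z[i]/\mathfrak p^2)^2$ such that $(\bar f,\bar g,\bar h)\in\mathcal A_{\mathfrak p}$.

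\textbf{Step 1: equidistribution in a residue class.} I would redo the argument of Proposition~\ref{Propn N' count} with $f$ and $h$ each restricted to a fixed class modulo $\mathfrak n^2$ and with $g$ fixed. For $z_0\in\Z[i]$, set $A_{z_0}(T):=\#\{z\equiv z_0 \bmod \mathfrak n^2: |z|\le T\}$. The only change is that
\[
A_{z_0}(T)=\pi T^2/\op{Norm}(\mathfrak n)^2+O_{\mathfrak n}(T).
\]
This estimate is used in the $f$-variable, in place of the Gaussian lattice-point count. It is also used in the $h$-variable, in the Abel summation replacing $A(T)$. The same computation then shows that each pair of classes contributes
\[
\frac{\pi^2(R_1-R_1')X}{|g|^2\,\op{Norm}(\mathfrak n)^4}+O_{\mathcal R,\mathfrak n}(X^{3/4}),
\]
since the sum over $v$ now collapses to the single term $v=|g|$. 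The error terms are handled exactly as before, and the constants depend on $\mathfrak n$.

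\textbf{Step 2: local densities.} Let $q=\op{Norm}(\mathfrak p)$; I would compute $a_{\mathfrak p}(g)/q^4$ directly, in the manner of Lemma~\ref{lem:local-density}. There are three cases.
\begin{itemize}
\item If $\mathfrak p\nmid g$, then $\bar f,\bar h$ must avoid $\mathfrak p^2$ and at most one of them may be divisible by $\mathfrak p$. This gives $(q^2-q)^2+2(q-1)(q^2-q)$ pairs, so the density is $(1-1/q)^2(1+2/q)$.
\item If $\mathfrak p\parallel g$, then both $f$ and $h$ must be prime to $\mathfrak p$. The density is $(1-1/q)^2$.
\item If $\mathfrak p^2\mid g$, the density is $0$.
\end{itemize}
Since $(1-1/q)^2(1+2/q)=1-3/q^2+2/q^3$, the product over $\mathfrak p\mid\mathfrak n$ equals
\[
\prod_{\mathfrak p\mid\mathfrak n}\Bigl(1-\tfrac{3}{q^2}+\tfrac{2}{q^3}\Bigr)\cdot\prod_{\mathfrak p\mid g}\frac{q}{q+2}
\]
when $g$ is squarefree at the primes dividing $\mathfrak n$, and equals $0$ otherwise.

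\textbf{Step 3: summing over $g$.} Summing the contributions from Steps 1 and 2 over the finitely many admissible $g$ gives the stated main term. The error term stays $O_{\mathcal R,\mathfrak n}(X^{3/4})$ because the number of $g$ is $O_{\mathcal R}(1)$.

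\textbf{Main obstacle.} Step 2 only enforces squarefreeness of $g$, and the factors $q/(q+2)$, at primes dividing $\mathfrak n$. To match the statement exactly, every prime dividing some $g$ with $|g|\le R_2$ must divide $\mathfrak n$. I would ensure this by taking $B\ge R_2^2$, which is natural given that $\mathfrak n$ is chosen to contain all primes of norm $\le B$. I expect the main point needing care to be that the Abel-summation step of Proposition~\ref{Propn N' count} really goes through with residue-class counts in the $h$-variable with uniform error. In particular, $A_{z_0}(T)$ has jumps at the values $|h|$ for $h$ in the class only. I would handle this by running the Riemann-sum comparison with $A_{z_0}$ directly, since the argument there used only the asymptotic $A(T)=cT^2+O(T)$.
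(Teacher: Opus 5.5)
Your proof is correct and gives the same main term and the same $O_{\mathcal R,\mathfrak n}(X^{3/4})$ error. The local densities in your Step~2 agree with the paper's fibre computation. Where you differ is the lattice-point count.

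The paper does not fibre over $h$. For fixed $g$ and fixed $a\in\widetilde{\Omega}_{\mathfrak n}$, it treats $(f,h)$ as a point of a translate of the rank-$4$ lattice $\mathfrak n^2\Z[i]\times\mathfrak n^2\Z[i]\subset\R^4$. It computes $\op{Vol}_4(\mathcal A)=\pi^2(R_1-R_1')X/|g|^2$ for the region $R_1'|h|\le|f|\le\min(R_1|h|,X/(|g|^2|h|^3))$, and then applies Davenport's lemma, bounding each $k$-dimensional projection by $O(X^{k/4})$.

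The two routes buy different things:
\begin{itemize}
\item The paper's argument is one step. Uniformity over residue classes is automatic, since the lattice is absorbed by a linear change of variables, and $F'$ never has to be analysed.
\item Your route is more elementary. It needs only the planar count $\pi T^2/\op{Norm}(\mathfrak n)^2+O_{\mathfrak n}(T+1)$ and the piecewise computation already carried out in Proposition~\ref{Propn N' count}.
\end{itemize}

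Your worry about the jumps of $A_{z_0}$ is unnecessary. Write $\sum_{h\equiv h_0,\,|h|\le W}F(|h|)=\int_{[0,W]}F\,dA_{h_0}$ and integrate by parts exactly. Since $F(W)=0$ and $|F'(r)|\ll_{\mathcal R} r$ on $[0,W]$, the error is $\ll\int_0^W(r+1)\,r\,dr\ll X^{3/4}$.

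One caution about handling the error terms ``exactly as before''. The $f$-count error summed over $h$ is $\sum_{|h|\le W}(|h|+1)\ll W^3\asymp X^{3/4}$, because the sum runs over Gaussian integers $h$ and not over integer values of $|h|$. Use this bound rather than the $X^{1/2+\varepsilon}$ claimed in the earlier lemma; it still meets your target.

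Your ``main obstacle'' identifies a hypothesis that the paper uses without stating it. Its proof writes the fibre count as
\[
|\mathfrak n|^8\prod_{\mathfrak p\mid\mathfrak n}(1-3q^{-2}+2q^{-3})\prod_{\mathfrak p\mid g}\frac{q}{q+2}
\]
and sums over squarefree $g$. This is valid only if every prime of norm at most $R_2^2$ divides $\mathfrak n$. For example, with $\mathfrak n=(1)$ the factor $\frac12$ attached to $g=1+i$ should be $1$. Your condition $B\ge R_2^2$ is exactly what is needed. It holds for $\mathfrak n=\mathfrak n_Y$ once $Y\ge R_2^2$, which is all that Theorem~\ref{thm:main-density} uses.
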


\begin{proof}
Let $\widetilde{\Omega}_{\mathfrak n} \subset (\Z[i]/\mathfrak n^2)^3$ be a fixed set of representatives for the congruence conditions. For $a = (a_1,a_2,a_3) \in \widetilde{\Omega}_{\mathfrak n}$, write
\[
f = a_1 + \mathfrak n^2 b_1, \quad
g = a_2 + \mathfrak n^2 b_2, \quad
h = a_3 + \mathfrak n^2 b_3, \qquad (b_1,b_2,b_3)\in \Z[i]^3.
\]
\noindent The shape conditions $(\lambda_1,\lambda_2) \in \mathcal R$ translate to
\[
R_1' |h| \le |f| \le R_1 |h|, \qquad R_2' \le |g| \le R_2,
\]
and the height constraint $H(f,g,h) = |f||g|^2 |h|^3 < X$ becomes
\[
|f||h|^3 < \frac{X}{|g|^2}.
\] 
Fix $g \in \Z[i]$ satisfying $|g|\in [R_2',R_2]$. Write $w := |h|$. Then $f$ must satisfy
\[
R_1' w \le |f| \le R(w), \qquad R(w) := \min\Big(R_1 w, X/(|g|^2 w^3)\Big),
\]
and the inequality $R(w) \ge R_1' w$ restricts $w$ to
\[
0 \le w \le W := \left(\frac{X}{R_1' |g|^2}\right)^{1/4}.
\]
The variables $(f,h)$ lie in a translate of the lattice
\[
\Lambda := \mathfrak n^2 \Z[i] \times \mathfrak n^2 \Z[i] \subset \R^4,
\]
with covolume $|\mathfrak n|^8$. Define the regions
\[
\begin{split}&\mathcal A_w := \{(f,h) \in \C^2: |h| = w, \; R_1' w \le |f| \le R(w)\},\\
& \mathcal A:=\{(f,h) \in \C^2: |h|\leq W, \; R_1' |h| \le |f| \le R(|h|)\}.
\end{split}
\] 
We compute the 4-dimensional volume of $\mathcal A$. For fixed $w$, the $h$-circle of radius $w$ has length $2\pi w$; the $f$-annulus has area $\pi(R(w)^2 - (R_1'w)^2)$. Hence
\[
\operatorname{Vol}_4(\mathcal A) = \int_0^W 2 \pi w \cdot \pi (R(w)^2 - R_1'^2 w^2) \, dw 
= 2 \pi^2 \int_0^W w \left(R(w)^2 - R_1'^2 w^2\right) \, dw.
\]
Define $w_0$ by
\[
R_1 w_0 = \frac{X}{|g|^2 w_0^3}\] i.e., $w_0 = \left(\frac{X}{R_1 |g|^2}\right)^{1/4}$. Then
\[
R(w) =
\begin{cases}
R_1 w, & 0 \le w \le w_0,\\
X/(|g|^2 w^3), & w_0 \le w \le W.
\end{cases}
\]
Split the integral as follows
\[
\operatorname{Vol}_4(\mathcal A) = 2 \pi^2 \left[ \int_0^{w_0} w \bigl( (R_1 w)^2 - (R_1' w)^2\bigr)\, dw + \int_{w_0}^{W} w \left( (X/(|g|^2 w^3))^2 - (R_1' w)^2 \right) dw \right].
\]
For the first integral,
\[
\int_0^{w_0} w \bigl( (R_1 w)^2 - (R_1' w)^2\bigr) dw 
= (R_1^2 - R_1'^2) \int_0^{w_0} w^3 dw = \frac{R_1^2 - R_1'^2}{4} w_0^4.
\]
Substitute $w_0^4 = X/(R_1 |g|^2)$:
\[
\int_0^{w_0} w \bigl( (R_1 w)^2 - (R_1' w)^2\bigr) dw = \frac{R_1^2 - R_1'^2}{4} \cdot \frac{X}{R_1 |g|^2}.
\]
For the second integral, compute each term explicitly:
\[
\frac{X^2}{|g|^4} \int_{w_0}^{W} w^{-5} dw - R_1'^2 \int_{w_0}^{W} w^3 dw
= \frac{X^2}{4 |g|^4} \Bigl( \frac{1}{w_0^4} - \frac{1}{W^4} \Bigr) - \frac{R_1'^2}{4} (W^4 - w_0^4).
\]
Substituting $w_0^4 = X/(R_1 |g|^2)$ and $W^4 = X/(R_1' |g|^2)$ gives
\[
\frac{X^2}{4 |g|^4 w_0^4} - \frac{X^2}{4 |g|^4 W^4} - \frac{R_1'^2}{4} W^4 + \frac{R_1'^2}{4} w_0^4
= \frac{X}{4 |g|^2} \Bigl( R_1 - 2 R_1' + \frac{R_1'^2}{R_1} \Bigr).
\] 
Adding the contribution from $[0,w_0]$, which equals $\frac{X}{4 |g|^2} \left(R_1 - \frac{R_1'^2}{R_1}\right)$, we obtain
\[
\int_0^W w \bigl(R(w)^2 - (R_1' w)^2\bigr) dw
= \frac{X}{4 |g|^2} \Bigl(R_1 - \frac{R_1'^2}{R_1} + R_1 - 2 R_1' + \frac{R_1'^2}{R_1} \Bigr)
= \frac{X}{2 |g|^2} (R_1 - R_1').
\]
Hence, the 4-dimensional volume of $\mathcal A$ is
\[
\operatorname{Vol}_4(\mathcal A) = 2 \pi^2 \int_0^W w \bigl(R(w)^2 - (R_1' w)^2\bigr) dw
= \frac{\pi^2 X}{|g|^2} (R_1 - R_1').
\] 
By Davenport's lemma in dimension $4$ \cite[Theorem on p.~180]{Davenport}, for the lattice $\Lambda \subset \R^4$ with covolume $|\mathfrak n|^8$, we have
\[
\#(\mathcal A \cap \Lambda) = \frac{\operatorname{Vol}_4(\mathcal A)}{|\mathfrak n|^8} + O\Bigg(\sum_{k=1}^{3} \sum_{\substack{I\subset \{1,2,3,4\} \\ |I|=k}} \operatorname{Vol}_k(\pi_I(\mathcal A)) \Bigg),
\]
where $\pi_I$ denotes the orthogonal projection onto the $k$-dimensional coordinate subspace. Each projection volume can be bounded as follows.
Let $(x_1,x_2,y_1,y_2)$ be the coordinates corresponding to $f = x_1+ix_2$, $h = y_1+iy_2$. Then we have that $|x_i|\leq W$ and $|y_i|\leq R_1 W$. Therefore, we find that 
\[\sum_{\substack{I\subset \{1,2,3,4\} \\ |I|=k}} \operatorname{Vol}_k(\pi_I(\mathcal A))\ll X^{k/4}.\]
Thus we find that
\[
\#\{(b_1,b_3): f = a_1 + \mathfrak n^2 b_1, h = a_3 + \mathfrak n^2 b_3, (f,h) \in \mathcal A\} = \frac{\pi^2 (R_1 - R_1') X}{ |g|^2 |\mathfrak n|^8} + O_{\mathcal{R},\mathfrak n}(X^{3/4}).
\]
\par Recalling that $g=a_2+\mathfrak n^2 b_2$, this yields
\[
N_{\mathfrak n}(\mathcal R;X)
=
\pi^2
\bigl(R_1-R_1'\bigr)
X\sum_{\substack{g\in\Z[i]\\ |g|\in[R_2',R_2]}} \frac{\#\{a\in\widetilde{\Omega}_{\mathfrak n}:\ a_2\equiv g \bmod \mathfrak n^2\}}{|\mathfrak{n}|^8|g|^{2}}
+
O_{\mathcal R, \mathfrak n}(X^{3/4}).
\]
We have that
\[\#\{a\in\widetilde{\Omega}_{\mathfrak n}:\ a_2\equiv g \bmod \mathfrak n^2\}=\prod_{\mathfrak p|\mathfrak n} \#\{a\in\widetilde{\Omega}_{\mathfrak n}:\ a_2\equiv g \bmod \mathfrak p^2\}\]
The cardinality of the fiber on the right-hand side factorizes over the
prime ideals dividing\/ $\mathfrak n$. Let $\mathfrak p$ be such a prime.
A direct local computation shows that
\[
\#\{a\in\widetilde{\Omega}_{\mathfrak p}:\ a_2\equiv g \bmod \mathfrak p^2\}
=
\begin{cases}
0, & \mathfrak p^2\mid g,\\[4pt]
\op{Norm}(\mathfrak p)^4
\left(1-\dfrac{3}{\op{Norm}(\mathfrak p)^2}+\dfrac{2}{\op{Norm}(\mathfrak p)^3}\right),
& \mathfrak p\nmid g,\\[8pt]
\op{Norm}(\mathfrak p)^4
\left(1-\dfrac{1}{\op{Norm}(\mathfrak p)}\right)^2,
& \mathfrak p\mid g,\ \mathfrak p^2\nmid g.
\end{cases}
\]
Multiplying over all $\mathfrak p\mid\mathfrak n$ yields
\[
\#\{a\in\widetilde{\Omega}_{\mathfrak n}:\ a_2\equiv g \bmod \mathfrak n^2\}
=
|\mathfrak{n}|^8
\prod_{\mathfrak p\mid\mathfrak n}
\left(1-\dfrac{3}{\op{Norm}(\mathfrak p)^2}+\dfrac{2}{\op{Norm}(\mathfrak p)^3}\right)
\prod_{\mathfrak p\mid g}
\left(
\frac{\op{Norm}(\mathfrak p)}
{\op{Norm}(\mathfrak p)+2}
\right).
\]
Substituting this expression into the previous formula completes the
proof.
\end{proof}

\begin{lemma}\label{lem:large-prime-tail}
Let $Y>0$ and let $\mathcal E_Y(\mathcal R;X)$ denote the set of triples
$(f,g,h)\in\Z[i]^3$ satisfying the height and shape conditions defining
$N(\mathcal R;X)$ but violating the strongly carefree condition at some
prime ideal $\mathfrak p$ with $\op{Norm}(\mathfrak p)>Y$. Then
\[
\frac{\#\mathcal E_Y(\mathcal R;X)}{X}
\ll_{\mathcal R}
\sum_{\op{Norm}(\mathfrak p)>Y}\op{Norm}(\mathfrak p)^{-2}.
\]
\end{lemma}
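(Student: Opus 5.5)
We bound $\#\mathcal E_Y(\mathcal R;X)$ with a union bound over the offending prime $\mathfrak p$ and over the type of failure. A triple fails the strongly carefree condition at $\mathfrak p$ if and only if one of five things happens: $\mathfrak p^2\mid f$, $\mathfrak p^2\mid g$, $\mathfrak p^2\mid h$, or $\mathfrak p$ divides two of $f,g,h$. Since $|g|\le R_2$, only finitely many $g$ occur. In particular $g\neq 0$ and $\op{Norm}(\mathfrak p)\mid \op{Norm}(g)\le R_2^2$ whenever $\mathfrak p\mid g$. So once $Y>R_2^2$, every condition involving $\mathfrak p\mid g$ is vacuous. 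For $Y\le R_2^2$ the claim follows from the trivial bound $\#\mathcal E_Y\le N'(\mathcal R;X)\ll_{\mathcal R} X$ (Proposition~\ref{Propn N' count}), because the prime sum is then bounded below by a constant depending on $\mathcal R$. The cases left to treat are therefore $\mathfrak p^2\mid f$, $\mathfrak p^2\mid h$, and $\mathfrak p\mid (f,h)$.

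Fix $\mathfrak p=(\pi_{\mathfrak p})$ and a divisibility condition $f\in\mathfrak a$, $h\in\mathfrak b$, with $(\mathfrak a,\mathfrak b)$ equal to one of $(\mathfrak p^2,1)$, $(1,\mathfrak p^2)$, $(\mathfrak p,\mathfrak p)$. In every case $\op{Norm}(\mathfrak a\mathfrak b)=q^2$, where $q=\op{Norm}(\mathfrak p)$. Fix $g$. By the proof of Proposition~\ref{prop:Nn-asymptotic}, $(f,h)$ must lie in the region $\mathcal A\subset\C^2$ of volume $\pi^2(R_1-R_1')X/|g|^2$, and now also in the lattice $\mathfrak a\times\mathfrak b$, which has covolume $q^2$. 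The task is a count of the form
\[
\#(\mathcal A\cap(\mathfrak a\times\mathfrak b))\ll_{\mathcal R} \frac{X}{q^2}
\]
that is \emph{uniform} in $\mathfrak p$. Davenport's lemma alone is not enough: its error term is $O(X^{3/4})$ with no saving in $q$, and summing that over $\ll X$ primes is useless.

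Uniformity is the main obstacle, and I plan to get it by fibering. Take the case $\mathfrak p^2\mid f$ first. For each $h$ with $|h|\le W\asymp X^{1/4}$, count the $f\in\mathfrak p^2$ in the annulus $R_1'|h|\le |f|\le R(|h|)$. A lattice of covolume $q^2$ with minimum $\sqrt{q^2}=q$ meets a disc of radius $\rho$ in $\ll \rho^2/q^2+1$ points, and in no nonzero points when $\rho<q$. Summing $\rho^2/q^2$ over $h$ reproduces $\operatorname{Vol}_4(\mathcal A)/q^2\ll X/q^2$. The leftover $+1$ terms are harmful only for those $h$ with $R(|h|)\ge q$; their total is at most the number of $h$, i.e.\ $\ll X^{1/2}$. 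To keep this from swamping the bound after summing over $\mathfrak p$, use $|f|\ge q^2$ whenever $f\neq 0$ and $\mathfrak p^2\mid f$. Together with $|f|\le R_1|h|$ and $|f||h|^3<X$, this forces $q^2\ll X^{1/4}$, so $\rho^2\ge q^2$ and the $+1$ is absorbed into $\rho^2/q^2$. The term $f=0$ gives $\lambda_1=0\notin\mathcal R$ and is excluded. The case $\mathfrak p^2\mid h$ is symmetric: fiber over $f$, and use $|h|\asymp|f|$ on $\mathcal R$. For $\mathfrak p\mid(f,h)$, count the $h\in\mathfrak p$ ($\ll W^2/q+1$ of them) and, for each, the $f\in\mathfrak p$ in the annulus ($\ll R(|h|)^2/q+1$, with $R(|h|)\ge \sqrt q$ automatically since $f\ne 0$). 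The product gives $X/q^2$ plus lower-order terms of the same type; these can again be absorbed using $|f|,|h|\ge\sqrt q$.

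Finally, sum over the finitely many $g$, using $\sum_g|g|^{-2}\ll_{\mathcal R}1$, and over $\op{Norm}(\mathfrak p)>Y$. This gives $\#\mathcal E_Y(\mathcal R;X)\ll_{\mathcal R} X\sum_{\op{Norm}(\mathfrak p)>Y}\op{Norm}(\mathfrak p)^{-2}$. The step I expect to be delicate is exactly the uniform lattice count in the case $\mathfrak p\mid(f,h)$, where the $+1$ boundary terms have to be controlled by the lower bounds $|f|,|h|\ge\sqrt{q}$ rather than by Davenport's lemma.
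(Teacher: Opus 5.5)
Your argument is correct, and it proves the lemma by a different and more careful route than the paper. The paper treats $\mathfrak p^2\mid f$ by writing $f=\pi^2 f'$ and applying Proposition~\ref{Propn N' count} to the rescaled problem, and it declares the remaining cases identical. That is short, but as written it does not address uniformity in $\mathfrak p$. Since $|f|=\op{Norm}(\mathfrak p)\,|f'|$, the rescaled height bound is really $X/\op{Norm}(\mathfrak p)$, and the $\lambda_1$-window shrinks to $[R_1'/q,R_1/q]$; these two losses together are what produce the main term $X/q^2$. More seriously, the $O_{\mathcal R}(X^{3/4})$ error in Proposition~\ref{Propn N' count} depends on the region, hence on $\mathfrak p$, and that dependence is exactly what matters when you sum over primes.

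Your fibering avoids this. You use the bound $\ll \rho^2/\op{covol}+1$ for the square lattices $\mathfrak p$ and $\mathfrak p^2$, and absorb the $+1$ because any fiber containing a nonzero lattice point has radius at least the lattice minimum. This gives $\ll_{\mathcal R} X/q^2$ for every $X$, uniformly in $\mathfrak p$, so it establishes the lemma exactly as stated. You also dispose of the $g$-conditions cleanly (they are vacuous once $Y>R_2^2$, and the case $Y\le R_2^2$ follows from the trivial bound). Finally, you treat the mixed case $\mathfrak p\mid(f,h)$ explicitly rather than by analogy.

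One slip to fix: a nonzero $f\in\mathfrak p^2$ satisfies $|f|\ge q$, not $|f|\ge q^2$; it is $\op{Norm}(f)$ that is at least $q^2$. This matches the lattice minimum $q$ you state earlier. The side conclusion therefore becomes $q\ll X^{1/4}$. In any case, your sentence ``this forces $q^2\ll X^{1/4}$, so $\rho^2\ge q^2$'' does not follow as written. The absorption of the $+1$ terms needs only the fiberwise observation that a fiber with a nonzero point has $\rho\ge q$, not any global bound on $q$.
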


\begin{proof}
Since $\#N(\mathcal R;X)\ll_{\mathcal R} X$, it suffices to obtain
uniform bounds for primes $\mathfrak p$ of sufficiently large norm.
Fix such a prime ideal $\mathfrak p$. If $(f,g,h)\in\mathcal
E_Y(\mathcal R;X)$ violates the strongly carefree condition at
$\mathfrak p$, then at least one of the following occurs: $\mathfrak
p^2\mid f$, $\mathfrak p^2\mid g$, $\mathfrak p^2\mid h$, or $\mathfrak
p\mid(f,g)$, $\mathfrak p\mid(f,h)$, or $\mathfrak p\mid(g,h)$.

Consider first the case $\mathfrak p^2\mid f$. Writing $f=\mathfrak
p^2f'$, the height condition
\[
|f||g|^2|h|^3<X
\]
implies
\[
|f'||g|^2|h|^3<\frac{X}{\op{Norm}(\mathfrak p)^2}.
\]
Applying the geometry-of-numbers estimate Proposition \ref{Propn N' count} to this
rescaled region, with the same shape constraints as in $N(\mathcal
R;X)$, yields a bound
\[
\#\{(f,g,h):\mathfrak p^2\mid f\}
\ll_{\mathcal R}
\frac{X}{\op{Norm}(\mathfrak p)^{2}}.
\]
The remaining cases are
treated identically. Summing these bounds over all primes
$\mathfrak p$ with $\op{Norm}(\mathfrak p)>Y$ yields the stated result.
\end{proof}

\begin{theorem}\label{thm:main-density}
As $X\to\infty$, one has
\[
N(\mathcal R;X)
=
\pi^2
\bigl(R_1-R_1'\bigr)
X
\sum_{\substack{g\in\Z[i]\\
g\ \mathrm{squarefree}\\
|g|\in[R_2',R_2]}}
\frac{1}{|g|^{2}}
\prod_{\mathfrak p\mid g}
\left(
\frac{\op{Norm}(\mathfrak p)}
{\op{Norm}(\mathfrak p)+2}
\right)
\prod_{\mathfrak p}
\left(1-\dfrac{3}{\op{Norm}(\mathfrak p)^2}+\dfrac{2}{\op{Norm}(\mathfrak p)^3}\right)
+
O_{\mathcal R}(X^{3/4}).
\]
\end{theorem}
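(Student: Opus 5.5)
The plan is to avoid the crude $\mathfrak n\to$ all primes limit. Instead I would run an explicit Möbius sieve fibrewise, organised so that the only error of size $X^{3/4}$ is the boundary error already present in Proposition~\ref{Propn N' count}. Since $|g|\le R_2$, the variable $g$ ranges over a finite set depending only on $\mathcal R$. So I fix a squarefree $g$ with $|g|\in[R_2',R_2]$, prove the asymptotic for that $g$, and sum at the end. For fixed $g$, I write $\mathbf 1[f,h \text{ squarefree},\ (f,h)=(f,g)=(h,g)=1]$ as follows. First I fix $h$ squarefree and coprime to $g$. Then I count $f$ that are squarefree and coprime to $gh$, lying in the annulus $R_1'w\le|f|\le R(w)$ with $w=|h|$ and $R(w)=\min(R_1w,X/(|g|^2w^3))$.

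\textbf{Step 1 (inner count over $f$).} For fixed $(g,h)$ I would use
\[
\mathbf 1[f\text{ sqfree},\,(f,gh)=1]=\sum_{\mathfrak d^2\mid f}\mu(\mathfrak d)\sum_{\mathfrak e\mid (f,gh)}\mu(\mathfrak e),
\]
with $\mathfrak d$ coprime to $\mathfrak e$ after the usual reorganisation. I then count $f$ in a coset of the ideal $\mathfrak d^2\mathfrak e$ inside the annulus using $A(T)=\pi T^2+O(T)$ for lattices of covolume $N(\mathfrak d)^2N(\mathfrak e)$; the error for each term is $O(R(w)/(N(\mathfrak d)N(\mathfrak e)^{1/2})+1)$. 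I truncate at $N(\mathfrak d)\le R(w)$, and bound the tail $N(\mathfrak d)>R(w)$ trivially by $\sum_{N(\mathfrak d)>R(w)}(R(w)^2/N(\mathfrak d)^2+1)$, restricted to $\mathfrak d$ with $\mathfrak d^2\mid f$ actually occurring. The main term is
\[
\pi\bigl(R(w)^2-R_1'^2w^2\bigr)\prod_{\mathfrak p\nmid gh}\Bigl(1-\tfrac1{N(\mathfrak p)^2}\Bigr)\prod_{\mathfrak p\mid gh}\Bigl(1-\tfrac1{N(\mathfrak p)}\Bigr),
\]
and the error is $O(R(w)\log(2+R(w))\,\tau(gh))$.

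\textbf{Step 2 (sum over $h$).} I sum the Step~1 error over $|h|\le W\asymp X^{1/4}$. As in the error lemma before \eqref{eq:Nprime-sum-reduction}, $\sum_{|h|\le W}R(|h|)\ll X^{3/4}$, and this is where the $X^{3/4}$ arises. The main term is $\sum_h \rho(h)\,F(|h|)$, with $F$ as in the proof of Proposition~\ref{Propn N' count} and $\rho$ the multiplicative weight of Step~1 times the indicator that $h$ is squarefree and coprime to $g$. I expand $\rho=\sum_{\mathfrak c\mid h}\nu(\mathfrak c)$ with $\nu$ supported on $\mathfrak c$ cube-free and $|\nu(\mathfrak p)|\ll N(\mathfrak p)^{-1}$, $|\nu(\mathfrak p^2)|\le1$. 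For each $\mathfrak c$, the Abel-summation argument of Proposition~\ref{Propn N' count}, applied on the sublattice $\mathfrak c$, gives
\[
\frac{1}{N(\mathfrak c)}\cdot 2\pi\int_0^W rF(r)\,dr+O\bigl(X^{3/4}/N(\mathfrak c)^{1/2}+X^{1/2}\bigr).
\]
I would truncate at $N(\mathfrak c)\le X^{1/4}$ and estimate the tail by $\sum_{N(\mathfrak c)>X^{1/4}}|\nu(\mathfrak c)|X/N(\mathfrak c)$. This tail is small because $|\nu(\mathfrak c)|/N(\mathfrak c)\ll N(\mathfrak c)^{-3/2+\varepsilon}$ on the square part.

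\textbf{Step 3 (assembly).} Collecting Steps 1 and 2 gives, for each $g$, a main term $\pi^2(R_1-R_1')X|g|^{-2}\cdot\sum_{\mathfrak c}\nu(\mathfrak c)/N(\mathfrak c)$ times the $f$-density. I would then verify prime by prime that the resulting Euler product equals
\[
\prod_{\mathfrak p}\Bigl(1-\tfrac{3}{N(\mathfrak p)^2}+\tfrac{2}{N(\mathfrak p)^3}\Bigr)\prod_{\mathfrak p\mid g}\tfrac{N(\mathfrak p)}{N(\mathfrak p)+2}.
\]
This is exactly the local computation already done in the proof of Proposition~\ref{prop:Nn-asymptotic} via Lemma~\ref{lem:local-density}, now with the product extended over all primes. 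Summing over the finitely many $g$ then gives Theorem~\ref{thm:main-density}.

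The main obstacle is the uniformity in Step~2. The sieve sums over $\mathfrak c$ threaten to cost a factor $X^{\varepsilon}$ or a logarithm on top of $X^{3/4}$, and the divisor function $\tau(gh)$ in Step~1 does the same. I would first try to absorb these by using the extra saving $N(\mathfrak c)^{-1/2}$ from the square-full part of $\nu$, together with the average bound $\sum_{|h|\le W}\tau(h)R(|h|)\ll X^{3/4}\log X$. If a logarithm genuinely survives, I would reorganise by counting directly on the $4$-dimensional lattices $\mathfrak d^2\mathfrak e\times\mathfrak c$ with Davenport's lemma, as in Proposition~\ref{prop:Nn-asymptotic}, so that the boundary error scales with the lattice minima.
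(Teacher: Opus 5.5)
Your route differs from the paper's. The paper only sandwiches $N_{\mathfrak n_Y}(\mathcal R;X)\le N(\mathcal R;X)\le N_{\mathfrak n_Y}(\mathcal R;X)+\#\mathcal E_Y(\mathcal R;X)$ and lets $X\to\infty$, then $Y\to\infty$. The implied constant in Proposition~\ref{prop:Nn-asymptotic} depends on $\mathfrak n_Y$, so that argument gives only $N(\mathcal R;X)=cX+o(X)$, with $c$ the constant in the statement. This is enough for Theorem~\ref{thm a}, but the $O_{\mathcal R}(X^{3/4})$ is never actually derived there. An explicit M\"obius sieve is the right tool for a real error term, and your main-term bookkeeping is correct. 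The $f$-density in Step~1 checks out, as do $\nu(\mathfrak p)=-1/(N(\mathfrak p)+1)$ and $\nu(\mathfrak p^2)=-N(\mathfrak p)/(N(\mathfrak p)+1)$ for $\mathfrak p\nmid g$. So does the local identity $(1-N(\mathfrak p)^{-2})\bigl(1-\tfrac{2}{N(\mathfrak p)(N(\mathfrak p)+1)}\bigr)=1-\tfrac{3}{N(\mathfrak p)^2}+\tfrac{2}{N(\mathfrak p)^3}$. But your error analysis does not reach $X^{3/4}$.

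\emph{The Step~2 tail estimate is false.} Because $h$ is squarefree, $|\nu(\mathfrak b^2)|=\prod_{\mathfrak p\mid\mathfrak b}N(\mathfrak p)/(N(\mathfrak p)+1)$ is bounded below on average. So on squares $|\nu(\mathfrak c)|/N(\mathfrak c)$ has size $N(\mathfrak c)^{-1}$, not $N(\mathfrak c)^{-3/2+\varepsilon}$. Bounding the tail $N(\mathfrak c)>X^{1/4}$ absolutely therefore gives only $X\sum_{N(\mathfrak b)>X^{1/8}}N(\mathfrak b)^{-2}\asymp X^{7/8}$. You must keep every $\mathfrak c$ with $N(\mathfrak c)\le W^2\asymp X^{1/2}$. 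Then the boundary errors $\sum_{N(\mathfrak b)\le W}X^{3/4}/N(\mathfrak b)$ cost $X^{3/4}\log X$.

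\emph{Step~1 carries a genuine logarithm.} The factor $\tau(gh)$ is removable: keep the $N(\mathfrak e)^{-1/2}$, since $\sum_{\mathfrak e\mid h}N(\mathfrak e)^{-1/2}$ is bounded on average over $h$. But the $\log R(w)$ coming from $\sum_{N(\mathfrak d)\le R}R/N(\mathfrak d)$ is a real loss of the trivial boundary bound, and it survives the $h$-sum.

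\emph{Neither fallback removes these logarithms.} The square part of $\nu$ is exactly where no saving beyond $N(\mathfrak c)^{-1/2}$ exists, and that is what produces the harmonic sum. Davenport's lemma on $\mathfrak d^2\times\Z[i]$ (successive minima $1,1,N(\mathfrak d),N(\mathfrak d)$) has a three-dimensional projection term of size $X^{3/4}/N(\mathfrak d)$, which again sums to $X^{3/4}\log X$.

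\emph{The missing ingredient is a power saving in the disc count}, e.g.\ $\#\{z\in\Z[i]:|z|\le T\}=\pi T^2+O(T^{2/3})$. With it, Step~1 costs $\sum_{N(\mathfrak d)\le R}(R/N(\mathfrak d))^{2/3}\ll R$ per $\mathfrak e$, with the factor $N(\mathfrak e)^{-1/2}$ kept. In Step~2 the error for each $\mathfrak c$ becomes $\ll X^{1/2}+W^{8/3}N(\mathfrak c)^{-1/3}$, which sums over $N(\mathfrak c)\le W^2$ to $O(W^3)=O(X^{3/4})$.

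Without this input, your argument with the cutoff corrected proves $N(\mathcal R;X)=cX+O(X^{3/4}(\log X)^2)$, or $O(X^{3/4}\log X)$ once $\tau(gh)$ is removed. That is stronger than what the paper's proof gives, but it is not the stated bound.
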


\begin{proof}
Let $\mathfrak n_Y$ denote the product of all prime ideals of $\Z[i]$
with norm at most $Y$. By definition of the strongly carefree condition,
we have
\[
N_{\mathfrak n_Y}(\mathcal R;X)
\le
N(\mathcal R;X)
\le
N_{\mathfrak n_Y}(\mathcal R;X)
+
\#\mathcal E_Y(\mathcal R;X).
\]
Dividing by $X^{3/4}$ and letting $X \to\infty$, Proposition~\ref{prop:Nn-asymptotic}
implies that
\[
\lim_{X\to\infty}
\frac{N_{\mathfrak n_Y}(\mathcal R;X)}{X}
=
\pi^2
\bigl(R_1-R_1'\bigr)
\prod_{\mathfrak p\mid\mathfrak n_Y}
\left(1-\dfrac{3}{\op{Norm}(\mathfrak p)^2}+\dfrac{2}{\op{Norm}(\mathfrak p)^3}\right)
\sum_{\substack{g\in\Z[i]\\
g\ \mathrm{squarefree}\\
|g|\in[R_2',R_2]}}
\frac{1}{|g|^{2}}
\prod_{\mathfrak p\mid g}
\left(
\frac{\op{Norm}(\mathfrak p)}
{\op{Norm}(\mathfrak p)+2}
\right).
\]
By Lemma~\ref{lem:large-prime-tail}, the contribution of
$\mathcal E_Y(\mathcal R;X)$ becomes negligible after dividing by
$X$ and letting $Y\to\infty$. Since
\[
\prod_{\mathfrak p\mid\mathfrak n_Y}
\left(1-\dfrac{3}{\op{Norm}(\mathfrak p)^2}+\dfrac{2}{\op{Norm}(\mathfrak p)^3}\right)\] converges to
\[\prod_{\mathfrak p}
\left(1-\dfrac{3}{\op{Norm}(\mathfrak p)^2}+\dfrac{2}{\op{Norm}(\mathfrak p)^3}\right),
\]
the claim follows.
\end{proof}

\noindent We now give a proof of our main distribution result.
\begin{proof}[Proof of Theorem \ref{thm a}]We find that \[\int_{\mathcal R}\hat{\mu}=\pi^2
\bigl(R_1-R_1'\bigr)
\sum_{\substack{g\in\Z[i]\\
g\ \mathrm{squarefree}\\
|g|\in[R_2',R_2]}}
\frac{1}{|g|^{2}}
\prod_{\mathfrak p\mid g}
\left(
\frac{\op{Norm}(\mathfrak p)}
{\op{Norm}(\mathfrak p)+2}
\right)
\prod_{\mathfrak p}
\left(1-\dfrac{3}{\op{Norm}(\mathfrak p)^2}+\dfrac{2}{\op{Norm}(\mathfrak p)^3}\right).\]
    Thus the result follows directly from Theorem \ref{thm:main-density}. 
\end{proof}
\bibliographystyle{alpha}
\bibliography{references}

\end{document}